\newtheoremstyle{mythm}{1.5ex plus 1ex minus .2ex}{1.5ex plus 1ex minus .2ex} {\rm}{\parindent}{\it\it}{\rm{:}}{1em}{}
\theoremstyle{mythm}
\newtheorem{theorem}{Theorem}[section]
\newtheorem{corollary}{Corollary}[section]
\newtheorem{lemma}{Lemma}[section]
\newtheorem{proposition}{Proposition}[section]
\newtheorem{remark}{Remark}
\begin{document}

\title{An Optimal Pricing Formula for Smart Grid based on Stackelberg Game}

\author{Jiangjiang Cheng, Ge Chen, Zhouming Wu, and Yifen Mu
\thanks{This research is supported by the National Key Research and Development Program of China (2022YFA1004600), and the National Natural Science Foundation of China (12288201, 12071465).}
\thanks{Jiangjiang Cheng and Zhouming Wu are with the School of Mathematical Sciences, University of Chinese Academy of Sciences, Beijing 100049, China (e-mail: chengjiangjiang@amss.ac.cn; wuzhouming@amss.ac.cn).}
\thanks{Ge Chen and Yifen Mu are with the Key Laboratory of Systems and Control, Academy of Mathematics and Systems Science, Chinese Academy of Sciences, Beijing 100190, China (e-mail: chenge@amss.ac.cn; mu@amss.ac.cn).}
}

\IEEEtitleabstractindextext{%
  \begin{abstract}
      The dynamic pricing of electricity is one of the most crucial demand response (DR) strategies in smart grid, where the utility company typically adjust electricity prices to influence user electricity demand.
      This paper models the relationship between the utility company and flexible electricity users as a Stackelberg game.
      Based on this model, we present a series of analytical results under certain conditions. First, we give an analytical Stackelberg equilibrium, namely the optimal pricing formula for utility company, as well as the unique and strict Nash equilibrium for users' electricity demand under this pricing scheme.  To our best knowledge, it is the first optimal pricing formula in the research of price-based DR strategies.
      Also, if there exist prediction errors for the supply and demand of electricity, we provide an analytical expression for the energy supply cost of utility company. 
     Moreover, a sufficient condition has been proposed that all electricity demands can be supplied by renewable energy.
      When the conditions for analytical results are not met, we provide a numerical solution algorithm for the Stackelberg equilibrium and verify its efficiency by simulation.
\end{abstract}

\begin{IEEEkeywords}
	~~Stackelberg game, Stackelberg equilibrium, optimal pricing formula, smart grid, demand response
\end{IEEEkeywords}}

\maketitle

\IEEEdisplaynontitleabstractindextext

\IEEEpeerreviewmaketitle

\renewcommand{\thesection}{\Roman{section}}
\section{Introduction\label{sec1}}
\renewcommand{\thesection}{\arabic{section}}

Smart grid refers to an advanced electricity distribution system that incorporates smart metering infrastructure capable of monitoring and measuring power consumption of users with sophisticated communication network technology \cite{gupta2015gadgets, beyea2010smart}, to achieve an efficient, reliable, and sustainable electric grid operation. \cite{smith2022effect, moslehi2010reliability}.
One of the key features of smart grid is the ability to regulate users' electricity demand with the aim of balancing supply and demand as well as reducing power generation costs, known as demand response (DR) \cite{siano2014demand, liu2023climate, li2021coordinating}. DR commonly refers to the energy consumption changes of users in response to factors such as electricity prices or incentive payments \cite{wang2023incentive, xu2021hybrid}, which plays a crucial role in optimizing energy allocation, alleviating peak load, and ensuring the stability of the power grid \cite{jiang2022multi, parizy2018low, zhong2016stability}.
On the other hand, the development of renewable energy technology has a significant effect on addressing the challenges of greenhouse gas emissions and environmental pollution, gradually occupying a dominant position in the energy supply of power grid \cite{smith2022effect, sterl2020smart}.
Therefore, it is natural to design DR mechanisms that take into account renewable energy to achieve the goal of sustainable development \cite{cecati2011combined, dai2021real}.

Among various DR strategies, price-based DR strategy is a prevalent research topic, including day-ahead pricing, real-time pricing (RTP) and peak load pricing, etc \cite{qian2013demand, li2020real,mahmoudi2014wind}. In terms of improving the performance of energy market, RTP is the hottest area of DR mechanism, in which the electricity prices announced by the utility company change typically to regulate user electricity demand \cite{ding2020tracking, samadi2014real}.
The study of RTP mechanism can help the smart grid achieve different objectives, including maximizing the social welfare \cite{namerikawa2015real}, minimizing the energy expenses \cite{atzeni2014noncooperative}, mitigating peak load \cite{baniasadi2018optimal}, and maintaining the stability of energy networks \cite{choi2017practical}.
Through the dynamic change in electricity prices, RTP incentivizes users to adjust their electricity consumption patterns to match real-time electricity supply, in order to realize more flexible and sustainable power system operation.

Because of the coupling relationship between multiple interactive entities in smart grid, such as the energy service provider and various users, network game theory provides a natural framework for the interaction among various participators with different objectives \cite{galeotti2010network, chen2023convergence, cheng2023evolutionary, riehl2016towards}. Most network game models typically exhibit complicated dynamic behaviors, making the theoretical analysis of these models very difficult \cite{madeo2014game, tan2016analysis}. Current mainstream methods include mean field dynamics \cite{moon2016linear, morimoto2015subsidy}, semi-tensor product method \cite{guo2013algebraic, cheng2016decomposed}, and so on. Zhu et al. \cite{zhu2016evolutionary} formulated and solved a demand-side management problem of networked smart grid based on evolutionary game theory. By using semi-tensor product, they proposed a new binary optimal control algorithm to minimize the overall cost of smart grid. Based on the randomness and dynamicity of wind and solar power generation, it is natural to employ a richer class of network games, namely stochastic game, to capture such dynamic uncertain environments.
Etesami et al. \cite{etesami2018stochastic} introduced a novel model for energy trading in smart grid employing a stochastic game framework on the demand side and an optimization problem on the utility side. They innovatively proposed a distributed algorithm to obtain equilibrium points of the system and adopted online convex optimization to solve the utility side problem. Besides, there are numerous network game models used to assist utility company in estimating energy demand and supply costs \cite{mohsenian2010autonomous}, achieving load balancing \cite{bayram2014unsplittable}, and managing energy storage and distributed microgrids \cite{li2022noncooperative}.
However, due to the large-scale, distributed, multi-agent nature of smart grid, and the uncertainty of renewable energy, the equilibrium analysis and solution of network game models in smart grid are still highly challenging, which will become an important direction for future endeavors.


Due to the typical hierarchical structure inherent in the electricity market, numerous price-based DR problems with two decision levels can be effectively modeled as a special type of network game in smart grid -- Stackelberg game.
The Stackelberg game model can be used to analyze the interactions between various entities in different scenarios.
A large amount of existing research focuses on the game relationship between utility company and users, and/or the interaction among users. Concretely, the utility company set dynamic electricity prices as a leader, and users adapt their electricity consumption patterns as followers, thereby achieving coordination of energy consumption and load distribution \cite{maharjan2013dependable, yu2015real}.
Additionally, the competition between multiple power utilities has gradually attracted widespread attention, with the market regulator setting price schemes to improve social welfare, and other power utilities adjusting pricing strategies accordingly to maximize their own profits while considering market competition \cite{yan2020distribution, li2019stackelberg}.

Although there have been many studies on price-based DR strategies in smart grid, a fundamental issue for utility companies is still open: what is the optimal pricing formula? There are some numerical solutions to this problem \cite{qian2013demand, samadi2014real, namerikawa2015real}, but its analytical expression has not yet been provided. This paper addresses this problem and its main contributions are as follows:


\begin{itemize}
     \item This paper proposes a novel price-based DR model of Stackelberg game between the electricity company and users, in which the goal of the electricity company is to minimize the power supply cost, while the goals of users are to minimize electricity consumption costs. Different from previous works \cite{jiang2022multi, maharjan2013dependable, yu2015real}, our proposed model comprehensively considers the uncontrollability of wind and solar energy, the diversity of electricity users, and some constraints on users' electricity consumption, which makes it more realistic in certain scenarios.
     \item Based on our proposed model, we present a series of analytical results under certain inequality conditions. First, if the supply and demand of electricity can be predicted exactly in a certain time window, we give an analytical expression for the Stackelberg equilibrium, in which the unique strict Nash equilibrium among users is also the optimal solution for the utility company, therefore the price expression in the Stackelberg equilibrium can be treated as an optimal pricing formula. To the best of our knowledge, it is the first optimal pricing formula in the research of price-based DR strategies.

     Also, if there exist prediction errors for the supply and demand of electricity, we provide an analytical expression for the Nash equilibrium among users and the corresponding energy supply cost of utility company. Moreover, we find that under the same prediction accuracy, by shortening the sampling length of the time slot, the utility company's cost function under the Nash equilibrium can be greatly reduced.

     Based on above results, we further provide a sufficient condition for an ideal scenario where all electricity demand can be supplied by renewable energy sources such as wind and photovoltaic power.
     \item When the inequality conditions for the analytical solution are not satisfied, 
     a search algorithm is developed to obtain the numerical solution of Stackelberg equilibrium,  and a simulation example is presented to demonstrate the effectiveness of our proposed algorithm.
\end{itemize}

The rest of this paper is organized as follows. Section \ref{sec2} introduces the constructed Stackelberg game model and some assumptions. Section \ref{sec3} analyzes the proposed game model and provides analytical solutions for the corresponding Stackelberg equilibrium under certain conditions, and the proofs of the corresponding theoretical results are presented in Section \ref{sec4}.
When the conditions are not met, Section \ref{sec5} provides a numerical solution algorithm for Stackelberg equilibrium. The simulation results are presented and discussed subsequently. Finally, the conclusion and outlook are made in Section \ref{sec6}.

\renewcommand{\thesection}{\Roman{section}}
\section{Problem Formulation and Assumptions\label{sec2}}
\renewcommand{\thesection}{\arabic{section}}

This section provides the formulation of the problem to be solved in this paper. Without loss of generality, our system includes one supply side, one service side and one demand side.
The supply side consists of various types of power supplies, which are divided into two categories: one is controllable power supply (e.g. thermal power generation, hydro power generation, storage);
the other one is uncontrollable power supply (e.g. wind power generation, photovoltaic power generation).
On the service side, there is a utility company engaged in electricity trading between the supply side and  demand side. By setting an appropriate price, the utility company can regulate users' electricity demands, and then feedback energy consumption information to supply side to achieve supply-demand balance. The demand side is composed of a large number of users, which can be divided into two categories: flexible electricity users (e.g. electric vehicles, hydrogen production plant, some hot water supply systems, some discrete electricity enterprises) and regular electricity users (e.g., household electricity, continuous electricity enterprises). The difference between these two categories of users is: the flexible users determine their electricity consumption according to the spot price, while the electricity demands  of regular users do not depend on the spot price.
Fig.~\ref{frame} illustrates the process of energy trading between power supplies, utility company, and users.
For ease of understanding, Table \ref{notations} summarizes the main notations used in this paper.
\begin{figure}[htp]
     \centering
     \includegraphics[width=3.4in]{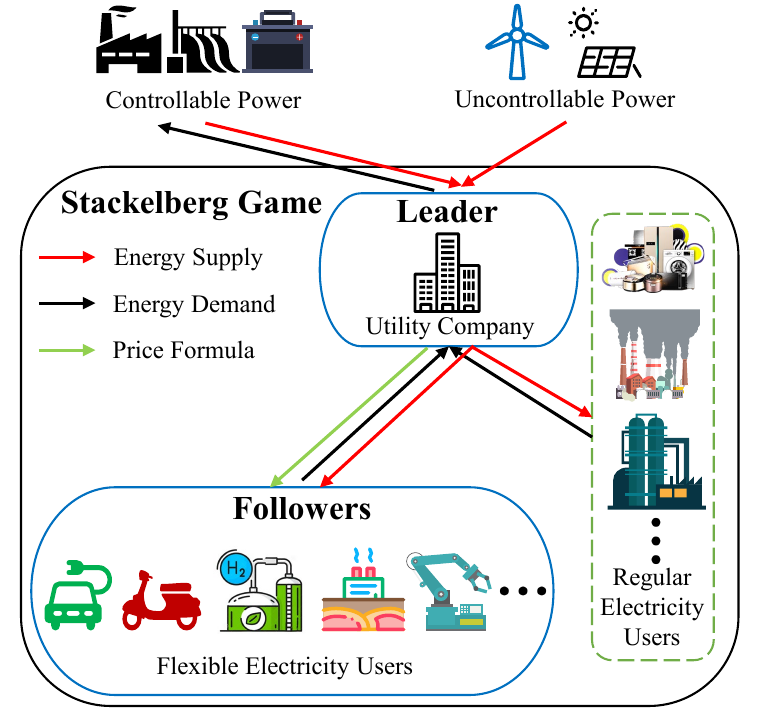}
     \caption{The process of energy trading between power supplies, utility company, and users.}
     \label{frame}
\end{figure}

\begin{table}[ht]
\renewcommand{\arraystretch}{1.8}
\begin{center}\footnotesize
\setlength{\tabcolsep}{12pt}
\caption{LIST OF MAIN NOTATIONS.}
\label{notations}
\begin{tabularx}{\columnwidth}{cX}
\toprule
Notation & \makebox[5.5cm][c]{Description} \\
\midrule
$\mathcal{N}$ & Set of all followers, i.e., $\mathcal{N}=\{1,2,\cdots,n\}$\\
$T$ & Number of scheduling time window\\
$\mathbf{1}$ & A column vector whose elements are all $1$\\
$\nu_i(\cdot)$ & Electricity demand of flexible user $i$ at each time slot\\
\multirow{2}*{$\bm{\nu}_i$} & The strategy of flexible user $i$, i.e., $\bm{\nu}_i=(\nu_i(1),\nu_i(2),\cdots,\nu_i(T))^{\top}$ \\
\multirow{2}*{$\bm{\nu}$} & The strategy combination of all flexible users, i.e., $\bm{\nu}=(\bm{\nu}_1,\bm{\nu}_2,\ldots,\bm{\nu}_n)^{\top}$\\
\multirow{2}*{$\bm{\nu}_{-i}$} & The strategies of all flexible users except the $i$th one, i.e., $\bm{\nu}_{-i}=(\bm{\nu}_1,\ldots,\bm{\nu}_{i-1},\bm{\nu}_{i+1},\ldots,\bm{\nu}_n)^{\top}$ \\
\multirow{2}*{$\nu_i^{\max}$} & Maximum electricity consumption of flexible user $i$ at each time slot\\
\multirow{2}*{$\nu_{\mathcal{N}}(\cdot)$} & Total electricity demand of all flexible users at each time slot, i.e., $\nu_{\mathcal{N}}(\cdot)=\sum_{i\in\mathcal{N}}\nu_i(\cdot)$\\
$g_i$ & Total electricity demand of flexible user $i$\\
\multirow{2}*{$g_{\mathcal{N}}$} & Total electricity demand of all flexible users, i.e., $g_{\mathcal{N}}=\sum_{i\in\mathcal{N}}g_i$\\
\multirow{2}*{$r(\cdot)$} & Total electricity demand of all regular users at each time slot\\
$w(\cdot)$ & Uncontrollable power generation at each time slot\\
$c(\cdot)$ & Controllable energy supply at each time slot\\
\multirow{2}*{$\pi(\cdot)$} & Electricity price function at each time slot, i.e., $\pi(\cdot)=\pi(\nu_{\mathcal{N}}(\cdot),r(\cdot),w(\cdot))$\\
$\bm{\pi}$ & Leader's strategy, i.e., $\bm{\pi}=(\pi(1),\pi(2),\ldots,\pi(T))^{\top}$\\
$u^l(\cdot)$ & The cost function of leader\\
$u_i^f(\cdot)$ & The electricity cost of flexible user $i$\\
\bottomrule
\end{tabularx}
\end{center}
\end{table}

\subsection{Stackelberg Game Model}

We model the interaction between the utility company and flexible electricity users as a Stackelberg game with one leader and $n$ followers. The utility company is the leader who regulates users' energy demands by setting proper electricity prices, thereby reducing the cost of controllable power generation; the flexible users are the followers who determine their electricity demand patterns after obtaining corresponding price schemes, thereby minimizing their own electricity costs.
The mathematical representations of our proposed Stackelberg game are introduced as follows.

\emph{Demand side}: Let $\mathcal{N}:=\{1,2,\cdots,n\}$ be the set of $n$  followers, where each follower denotes a flexible electricity user.
Considering some practical scenarios, such as electric vehicle charging or electricity consumption of factories produced according to orders, we assume that
 each user $i\in\mathcal{N}$ has a total electricity demand $g_i>0$ that must be satisfied within $T$ time periods, where $T$ represents the scheduling time window.
To finish the energy consumption goals while minimizing their own electricity costs, each user $i\in\mathcal{N}$ needs to determine the electricity demand $\nu_i(t)$, $0\leq \nu_i(t)\leq \nu_i^{\max}$ at each time slot $t, t=1,\cdots,T$. Here $\nu_i^{\max}$ represents the maximum electricity consumption  of user $i$ at each time slot, which satisfies $\nu_i^{\max} \geq g_i/T$.
The strategy of user $i$ is set to be  $$\bm{\nu}_i:=(\nu_i(1),\nu_i(2),\cdots,\nu_i(T))^{\top}.$$ Therefore, the total electricity demand of all flexible electricity users at time slot $t$ is $\nu_{\mathcal{N}}(t):=\sum_{i\in\mathcal{N}}\nu_i(t)$, $t=1,\cdots,T$, and the total electricity demand of all flexible electricity users is $g_{\mathcal{N}}:=\sum_{i\in\mathcal{N}}g_i$.
Let $\bm{\nu}$ be the strategy combination of all flexible users, i.e., $\bm{\nu}=(\bm{\nu}_1,\bm{\nu}_2,\ldots,\bm{\nu}_n)^{\top}.$

On the other hand, the regular electricity users have their own electricity demands at each time slot and are not affected by real-time electricity prices. We assume that the total electricity demand of all regular users at time slot $t$ is $r(t)$, $t=1,\cdots,T$, which is assumed to be a known sequence.

\emph{Supply side}:  We denote the energy supply of controllable and uncontrollable energy at time slot $t$ as $c(t)$ and $w(t)$, respectively.
It is worth mentioning that the uncontrollable power generation $w(t)$ significantly depends on climate conditions and should be a stochastic sequence. However, to obtain some deterministic results
we assume $\{w(t)\}_{t=1,\ldots,T}$ is a deterministic sequence.

\emph{Service side}: As the leader, the utility company regulates the electricity demands of users by setting an appropriate electricity price $\pi(t)=\pi(\nu_{\mathcal{N}}(t),r(t),w(t))$, $t=1,\cdots,T$, which is a function with respect to $\nu_{\mathcal{N}}(t)$, $r(t)$, and $w(t)$. We denote the strategy of utility company as $$\bm{\pi}:=(\pi(1),\pi(2),\ldots,\pi(T))^{\top}.$$
The utility company need purchase $c(t)$, $t=1,\cdots,T$ electricity quantity
from controllable power generation to guarantee the dynamic balance between electricity demand and supply, i.e.,
$\nu_{\mathcal{N}}(t)+r(t)=w(t)+c(t)$. Since
\begin{equation*}
     \begin{aligned}
          \sum_{t=1}^T c(t)&=\sum_{t=1}^T \left[\nu_{\mathcal{N}}(t)+r(t)-w(t)\right]\\
                           &=\sum_{i=1}^n g_i+ \sum_{t=1}^T \left[r(t)-w(t)\right]\\
                           &=g_{\mathcal{N}}+ \sum_{t=1}^T \left[r(t)-w(t)\right]
     \end{aligned}
\end{equation*}
is assumed to be a constant, we set the cost function $u^l(\bm{\pi},\bm{\nu})$ of utility company to be the variance of controllable power $c(t)$.
Let $\bar{c}:=\frac{1}{T}\sum_{t=1}^T c(t)$ be the average value of $c(t)$ over the entire time periods.
Therefore, the leader-level optimization problem is formulated in \eqref{leader}.

For each flexible electricity user $i\in\mathcal{N}$, let $\bm{\nu}_{-i}$ denote the strategies of all users except the $i$th one.
The utility of flexible user $i$ is set to be its electricity cost $u_i^f(\bm{\pi},\bm{\nu}_i,\bm{\nu}_{-i})$,
which is a function with respect to the strategies of the utility company and all flexible users.
 Then,  the follower-level game among all flexible users can be described by \eqref{followers}.

\vskip 2mm
\begin{center}
\noindent\framebox[28em][r]{%
  \parbox{3.7in}{%
       \begin{align}
          &\textbf{Leader-Level Optimization:}\nonumber\\
          &\min_{\bm{\pi}}u^l(\bm{\pi},\bm{\nu})=\frac{1}{T}\sum_{t=1}^T\big[c(t)-\bar{c}\big]^2, \label{leader}\\
          &~~\mathrm{s.t.}~ \nu_{\mathcal{N}}(t)+r(t)=w(t)+c(t), ~~t=1,\cdots,T.\nonumber\\
          &\textbf{Follower-Level Game: }\mbox{For each flexible user }i=1,\ldots,n, \nonumber\\
          &\min_{\bm{\nu}_i}u_{i}^f(\bm{\pi},\bm{\nu}_i,\bm{\nu}_{-i})=\sum_{t=1}^T \pi(\nu_{\mathcal{N}}(t),r(t),w(t))\cdot \nu_i(t), \nonumber\\
          &~~\mathrm{s.t.}~ \sum_{t=1}^T \nu_i(t)=g_i,  \label{followers}\\
          &~~~~~~ 0\leq \nu_i(t)\leq  \nu_i^{\max},  ~~t=1,\cdots,T.\nonumber
     \end{align}
  }%
}
\end{center}
\vskip 2mm

\emph{Overall process of the Stackelberg Game}:
The utility company first announces the price formula to all flexible users. Then, each user selects the corresponding optimal demand strategy as the ``best response'' to the leader's strategy, and feeds its strategy back to the utility company. Therefore, the solution of the Stackelberg game problem can be ultimately transformed into the choice of the optimal pricing formula of the utility company,
based on the rational assumption that each flexible user selects its  best-response strategy.

\renewcommand{\thesection}{\Roman{section}}
\section{Analysis of Stackelberg Game Model\label{sec3}}
\renewcommand{\thesection}{\arabic{section}}

In this section, we analyze the proposed Stackelberg game \eqref{leader}-\eqref{followers}. First, we provide a strict definition of Stackelberg equilibrium as the solution of the Stackelberg game. Then, we present a series of analytical results for equilibria in Stackelberg game \eqref{leader}-\eqref{followers} under certain conditions.
Concretely, we give an analytical Stackelberg equilibrium, namely the optimal pricing formula for utility company, as well as the unique and strict Nash equilibrium for users' electricity demand under this pricing scheme. 
Also, if there exist prediction errors for the supply and demand of electricity, we provide an analytical expression for the energy supply cost of utility company. 
Moreover, a sufficient condition has been proposed that all electricity demands can be supplied by new energy sources.


\subsection{Stackelberg Equilibrium}\label{sec3_1}

For the proposed Stackelberg game \eqref{leader}-\eqref{followers}, we introduce the classic Stackelberg equilibrium (SE) \cite{bacsar1998dynamic} formulated as follows.

Given a leader's strategy $\bm{\pi}$, the best response strategy $\bm{\nu}(\bm{\pi})=(\bm{\nu}_1(\bm{\pi}),\bm{\nu}_2(\bm{\pi}),\ldots,\bm{\nu}_n(\bm{\pi}))^{\top}$ of followers is the Nash equilibrium of game model \eqref{followers}, that is, for any user $i\in\mathcal{N}$ and any strategy $\bm{\nu}_i'$, we have
\begin{equation}\label{NEdef}
     u_i^f(\bm{\pi},\bm{\nu}_i',\bm{\nu}_{-i}(\bm{\pi}))\geq u_i^f(\bm{\pi},\bm{\nu}_{i}(\bm{\pi}),\bm{\nu}_{-i}(\bm{\pi})).
\end{equation}
Further, if the inequality in \eqref{NEdef} strictly holds for any $\bm{\nu}_i'\neq \bm{\nu}_{i}(\bm{\pi})$, we say that $\bm{\nu}(\bm{\pi})$ is the \emph{strict} Nash equilibrium.
Meanwhile, based on the assumption that the followers reach a Nash equilibrium,  the leader should try to minimize its cost function $u^l(\bm{\pi},\bm{\nu}(\bm{\pi}))$
by optimizing its price function $\bm{\pi}$. Let $\hat{\bm{\pi}}$ be the solution of the problem $\min_{\bm{\pi}}u^l(\bm{\pi},\bm{\nu}(\bm{\pi}))$, and $\hat{\bm{\nu}}=\bm{\nu}({\hat{\bm{\pi}}})$ be the best responses of all followers to the leader. Then $(\hat{\bm{\pi}}, \hat{\bm{\nu}})$ is the SE of the Stackelberg game \eqref{leader}-\eqref{followers}.

In addition, we say  $(\hat{\bm{\pi}}, \hat{\bm{\nu}})$ is a \emph{perfect SE} of the Stackelberg game \eqref{leader}-\eqref{followers} if $u^l(\hat{\bm{\pi}}, \hat{\bm{\nu}})=0$.


\subsection{Main Results}\label{ssmainresult}

A natural idea is to set the price function as the ratio of total electricity demand to power generation capacity. In order to fully utilize the power generation capacity of wind and solar power generation,
we define a price function as
\begin{equation}\label{priceform0}
\pi(t)=\frac{\nu_{\mathcal{N}}(t)+r(t)+a_2(t)}{w(t)+a_1(t)},~~\forall 1\leq t\leq T,
\end{equation}
where $a_1(t)$ and $a_2(t)$ are two adjustable factors. We aim to find the SE of the Stackelberg game \eqref{leader}-\eqref{followers} by adjusting the values of $a_1(t)$ and $a_2(t)$.
Set
\begin{equation}\label{wandr}
     \widetilde{w}(t):=w(t)+a_1(t),~ \widetilde{r}(t):=r(t)+a_2(t),~\forall 1\leq t\leq T.
\end{equation}
Under the leader's strategy \eqref{priceform0}, we first analyze the Nash equilibrium of the following game model:
\begin{eqnarray}\label{follower2}
\left\{
\begin{aligned}
 &\min_{\bm{\nu}_i}u_{i}^f(\bm{\pi},\bm{\nu}_i,\bm{\nu}_{-i}),\\
     &\mathrm{s.t.}~ \sum_{t=1}^T \nu_i(t)=g_i,
     \end{aligned}~~~~i=1,\ldots,n. \right.
\end{eqnarray}

\begin{proposition}\label{saddle}
   Assume that the leader's strategy adopts \eqref{priceform0}, and  $\widetilde{w}(t)>0$ for any $1\leq t\leq T$, then the strategy combination  $\bm{\nu}$  of all followers with
     \begin{equation}\label{saddpoi}
          \begin{aligned}
               \nu_i(t)&=\frac{\widetilde{w}(t)}{\sum_s\widetilde{w}(s)}g_i+\frac{1}{(n+1)\sum_s\widetilde{w}(s)}\\
               &\qquad\qquad\times\sum_{s=1}^T\widetilde{w}(s)\widetilde{w}(t)\left[\frac{\widetilde{r}(s)}{\widetilde{w}(s)}-\frac{\widetilde{r}(t)}{\widetilde{w}(t)}\right],\\
               &\qquad\qquad\qquad\qquad~ i=1,\ldots,n,~ t=1,\ldots,T
          \end{aligned}
     \end{equation}
     is a strict and unique Nash equilibrium of the game model \eqref{follower2}.
\end{proposition}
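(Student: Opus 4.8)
The plan is to treat the follower-level game \eqref{follower2} as a strictly convex game and reduce everything to solving the coupled first-order conditions in closed form. Once the leader fixes the price form \eqref{priceform0}, so that $\pi(t)=(\nu_{\mathcal{N}}(t)+\widetilde{r}(t))/\widetilde{w}(t)$, each follower's objective $u_i^f$ becomes a strictly convex quadratic function of its own strategy $\bm{\nu}_i$, minimized over the affine set $\{\bm{\nu}_i:\sum_t\nu_i(t)=g_i\}$. Hence each best response is unique, the constraint being affine means a constraint qualification holds automatically, and by strict convexity the Lagrange stationarity condition is both necessary and sufficient for the best response. Therefore a profile is a Nash equilibrium of \eqref{follower2} if and only if the stationarity conditions of all $n$ followers hold simultaneously, and the whole proof amounts to solving that linear system and reading off \eqref{saddpoi}.

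Concretely, I would first write $\nu_{\mathcal{N}}(t)=\nu_i(t)+\sum_{j\neq i}\nu_j(t)$ to obtain
\[
u_i^f(\bm{\pi},\bm{\nu}_i,\bm{\nu}_{-i})=\sum_{t=1}^T\frac{\nu_i(t)^2+\nu_i(t)\sum_{j\neq i}\nu_j(t)+\widetilde{r}(t)\,\nu_i(t)}{\widetilde{w}(t)},
\]
whose Hessian in $\bm{\nu}_i$ is $\mathrm{diag}\!\big(2/\widetilde{w}(1),\ldots,2/\widetilde{w}(T)\big)\succ 0$ because $\widetilde{w}(t)>0$, so $u_i^f$ is strictly convex in $\bm{\nu}_i$. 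Introducing a multiplier $\lambda_i$ for $\sum_t\nu_i(t)=g_i$, stationarity reads $2\nu_i(t)+\sum_{j\neq i}\nu_j(t)+\widetilde{r}(t)=\lambda_i\widetilde{w}(t)$, i.e.
\[
\nu_i(t)+\nu_{\mathcal{N}}(t)+\widetilde{r}(t)=\lambda_i\widetilde{w}(t),\qquad i=1,\ldots,n,\ t=1,\ldots,T.
\]

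Next I would sum this identity over $i$ to get $(n+1)\nu_{\mathcal{N}}(t)+n\widetilde{r}(t)=\Lambda\widetilde{w}(t)$ with $\Lambda:=\sum_i\lambda_i$, hence $\nu_{\mathcal{N}}(t)=\frac{1}{n+1}\big(\Lambda\widetilde{w}(t)-n\widetilde{r}(t)\big)$; substituting back gives $\nu_i(t)=\big(\lambda_i-\frac{\Lambda}{n+1}\big)\widetilde{w}(t)-\frac{1}{n+1}\widetilde{r}(t)$. Imposing $\sum_t\nu_i(t)=g_i$ pins down the scalar $\lambda_i-\frac{\Lambda}{n+1}=\frac{g_i}{W}+\frac{R}{(n+1)W}$, where $W:=\sum_t\widetilde{w}(t)$ and $R:=\sum_t\widetilde{r}(t)$; plugging this in and rewriting $\widetilde{w}(t)R-W\widetilde{r}(t)=\sum_s\widetilde{w}(t)\widetilde{w}(s)\big[\widetilde{r}(s)/\widetilde{w}(s)-\widetilde{r}(t)/\widetilde{w}(t)\big]$ yields exactly \eqref{saddpoi}. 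For uniqueness I would observe that each quantity along the way — first $\lambda_i-\frac{\Lambda}{n+1}$, then $\nu_{\mathcal{N}}(t)$, then each $\nu_i(t)$ — is forced, so the stationarity system has a unique solution, which by the necessity/sufficiency above is the unique Nash equilibrium; strictness of \eqref{NEdef} is immediate since each $u_i^f$ is strictly convex in $\bm{\nu}_i$ on the feasible affine set.

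I do not expect a serious obstacle here: the argument is elementary convex optimization plus bookkeeping. The two points that need care are the ``necessary and sufficient'' claim for the Lagrange conditions (justified by the affine constraint together with strict convexity, so there is no duality gap and the stationary point is the global minimizer), and the final rearrangement of $\widetilde{w}(t)R-W\widetilde{r}(t)$ into the symmetric double sum appearing in \eqref{saddpoi}, which is the only mildly delicate manipulation.
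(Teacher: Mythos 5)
Your argument is correct, and it reaches \eqref{saddpoi} by a genuinely different route than the paper. You keep the equality constraint and work with Lagrange multipliers in $\mathbb{R}^T$: since each $u_i^f$ has Hessian $\mathrm{diag}\big(2/\widetilde{w}(1),\ldots,2/\widetilde{w}(T)\big)\succ 0$ in $\bm{\nu}_i$ and the constraint is affine, stationarity plus feasibility characterizes each best response, so a profile is an equilibrium of \eqref{follower2} iff the coupled linear system $2\nu_i(t)+\sum_{j\neq i}\nu_j(t)+\widetilde{r}(t)=\lambda_i\widetilde{w}(t)$, $\sum_t\nu_i(t)=g_i$ holds; summing over $i$, back-substituting, and using $\sum_t\widetilde{w}(t)>0$ forces every $\nu_i(t)$, which gives both the formula and uniqueness in one stroke, with strictness from strict convexity on the affine set. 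The paper instead eliminates $\nu_i(T)$, rewrites each objective as a quadratic form in $\bm{d}_i\in\mathbb{R}^{T-1}$ with the matrix $\mathbf{C}$ (Lemma \ref{bianxing}), solves the stacked first-order system via the Sherman--Morrison formula (Lemma \ref{les}), proves strictness by the quadratic expansion, and establishes uniqueness through a sequential best-response map shown to be a contraction (Lemmas \ref{Banach}--\ref{conmap}, via a spectral-radius/Jordan-form estimate). Your route is shorter and more elementary — in particular it makes Banach's fixed point theorem and the matrix inversion unnecessary for this proposition — while the paper's heavier apparatus is not wasted: the quadratic form \eqref{quafun} and positive definiteness of $\mathbf{C}$ are reused in Proposition \ref{oristruni} for the box-constrained model \eqref{followers} (where your pure equality-multiplier argument would no longer apply as stated), and the explicit best-response map underlies the numerical Algorithm 1. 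The only point you should make explicit in a final writeup is the routine back-substitution confirming that \eqref{saddpoi}, with the multipliers you solve for, indeed satisfies the stationarity system, so that existence (not just uniqueness of any solution) is on record.
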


\vskip 2mm

For the model \eqref{followers}, we can prove that the Nash equilibrium is strict and unique, however its explicit expression is unknown:
\begin{proposition}\label{oristruni}
     Assume that the leader's strategy adopts \eqref{priceform0} with $\widetilde{w}(t)>0$ for any $1\leq t\leq T$, then the model \eqref{followers} admits a strict and unique Nash equilibrium.
\end{proposition}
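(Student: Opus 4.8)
The plan is to leverage Proposition \ref{saddle} together with a convexity/monotonicity argument to transfer existence and uniqueness from the unconstrained game \eqref{follower2} to the box-constrained game \eqref{followers}. First I would establish that, under the price function \eqref{priceform0}, each follower's cost $u_i^f(\bm{\pi},\bm{\nu}_i,\bm{\nu}_{-i})=\sum_{t=1}^T \frac{\nu_{\mathcal{N}}(t)+\widetilde r(t)}{\widetilde w(t)}\,\nu_i(t)$ is a strictly convex quadratic function of $\bm{\nu}_i$ on the affine slice $\{\sum_t \nu_i(t)=g_i\}$: the Hessian in $\bm{\nu}_i$ is the diagonal matrix $\mathrm{diag}\big(2/\widetilde w(t)\big)_{t=1}^T$, which is positive definite since $\widetilde w(t)>0$. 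Hence for each fixed $\bm{\nu}_{-i}$ the feasible set of \eqref{followers} — the intersection of the simplex-type hyperplane with the box $[0,\nu_i^{\max}]^T$ — is a nonempty (because $\nu_i^{\max}\ge g_i/T$) compact convex polytope, and the strictly convex objective attains a unique minimizer on it; this gives a well-defined, single-valued best-response map $B_i(\bm{\nu}_{-i})$ and, by Brouwer/Kakutani applied to the continuous map $B=(B_1,\dots,B_n)$ on the compact convex product of polytopes, existence of at least one Nash equilibrium.

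For strictness, I would argue that at any Nash equilibrium $\bm{\nu}^\star$ of \eqref{followers}, if follower $i$ deviates to $\bm{\nu}_i'\neq\bm{\nu}_i^\star$ feasible, then because $u_i^f(\bm{\pi},\cdot,\bm{\nu}_{-i}^\star)$ is strictly convex and $\bm{\nu}_i^\star$ is its unique minimizer over the (convex) feasible set, the inequality in \eqref{NEdef} is strict; so every Nash equilibrium is automatically strict, and the only issue left is uniqueness.

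Uniqueness is the crux, and I expect it to be the main obstacle. The clean approach is a diagonal-strict-concavity / monotonicity argument in the spirit of Rosen: define the pseudo-gradient $F(\bm{\nu})=\big(\nabla_{\bm{\nu}_i} u_i^f\big)_{i=1}^n$ and show it is strictly monotone on the common feasible set, i.e. $(F(\bm{\nu})-F(\bm{\nu}'))^\top(\bm{\nu}-\bm{\nu}')>0$ for $\bm{\nu}\neq\bm{\nu}'$ with equal per-user totals; strict monotonicity of $F$ forces the variational inequality characterizing Nash equilibria to have at most one solution. Computing $\partial u_i^f/\partial\nu_i(t)=\frac{2\nu_i(t)+\sum_{j\neq i}\nu_j(t)+\widetilde r(t)}{\widetilde w(t)}$, the Jacobian of $F$ has a block structure that is a positive-definite diagonal part plus a rank-structured coupling; the quadratic form $(\bm{\nu}-\bm{\nu}')^\top \mathcal{J}\,(\bm{\nu}-\bm{\nu}')$ works out to $\sum_{t}\frac{1}{\widetilde w(t)}\big[\sum_i \delta_i(t)^2 + \big(\sum_i \delta_i(t)\big)^2\big]$ with $\delta_i(t)=\nu_i(t)-\nu_i'(t)$, which is strictly positive unless all $\delta_i(t)=0$. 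The delicate point is that the per-user equality constraints $\sum_t\delta_i(t)=0$ are needed only to pass between "best-response optimality" and the monotonicity conclusion (they do not weaken the strict positivity above), and one must be careful that the box constraints, being merely inequality constraints, do not obstruct the argument — they do not, since strict monotonicity of $F$ over any convex set containing all equilibria already precludes two distinct solutions of the associated VI. Assembling these pieces — strict convexity per player (existence + strictness) and strict monotonicity of the pseudo-gradient (uniqueness) — completes the proof.
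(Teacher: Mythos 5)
Your argument is correct, and at its core it rests on the same mechanism the paper uses: uniqueness from positive definiteness of the pseudo-gradient Jacobian (Rosen's diagonal strict concavity) and strictness from strict convexity of each follower's quadratic cost. The differences are in execution. The paper eliminates $\nu_i(T)$ via the equality constraint, works with the reduced variables $\bm{d}_i$ so that each player's Hessian is $2\mathbf{C}$, shows the pseudo-gradient Jacobian equals $(\mathbf{I}+\mathbf{1}\mathbf{1}^{\top})\otimes\mathbf{C}$, and then cites Rosen's Theorem 2 (Lemma \ref{uniNE}) to obtain existence and uniqueness in one stroke, finishing strictness with the same strict-convexity (midpoint) argument you sketch. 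You instead stay in the full coordinates, where each player's Hessian is $\mathrm{diag}\big(2/\widetilde{w}(t)\big)$ and the coupling decomposes per time slot into blocks $\tfrac{1}{\widetilde{w}(t)}\big(\mathbf{I}_n+\mathbf{1}\mathbf{1}^{\top}\big)$, giving the quadratic form $\sum_t\tfrac{1}{\widetilde{w}(t)}\big[\sum_i\delta_i(t)^2+\big(\sum_i\delta_i(t)\big)^2\big]$; you then prove the uniqueness step yourself via strict monotonicity of the (affine) pseudo-gradient and the variational-inequality characterization of equilibria, and you supply existence separately through Brouwer applied to the single-valued best-response map. Your route buys self-containedness — you effectively re-derive the needed part of Rosen's theorem, and the per-slot block structure makes positive definiteness transparent without coordinate elimination or Kronecker products — while the paper's route buys brevity, since Rosen's theorem delivers existence and uniqueness simultaneously. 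The only point to tighten in your version is a one-line justification that the best-response map is continuous (it is the unique minimizer of a strictly convex quadratic over a fixed compact polytope, so Berge's maximum theorem plus single-valuedness gives continuity), which Brouwer requires.
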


For any $1\leq t\leq T$, if we choose
\begin{eqnarray}\label{cstar}
     \begin{aligned}
     &a_1^*(t):=\frac{g_{\mathcal{N}}}{T}-r(t)+\frac{1}{T}\sum_{s=1}^T[r(s)-w(s)],\\
     &a_2^*(t):=\frac{n+1}{n}[w(t)+a_1^*(t)]-r(t),
     \end{aligned}
\end{eqnarray}
then the price function \eqref{priceform0} becomes
\begin{align}\label{priceform}
&~~\pi^*(t):=\frac{\nu_{\mathcal{N}}(t)+r(t)+a_2^*(t)}{w(t)+a_1^*(t)}\\
&=1+\frac{1}{n}+\frac{\nu_{\mathcal{N}}(t)}{w(t)+\dfrac{g_{\mathcal{N}}}{T}-r(t)+\frac{1}{T}\sum_{s=1}^T[r(s)-w(s)]}.\nonumber
\end{align}
We set
\begin{equation}\label{thm1_p1}
     \begin{aligned}
          \widetilde{w}^*(t):&=w(t)+a_1^*(t)\\
          &=\frac{g_{\mathcal{N}}}{T}+w(t)-r(t)+\frac{1}{T}\sum_{s=1}^T[r(s)-w(s)],\\
          \widetilde{r}^*(t):&=r(t)+a_2^*(t)=\frac{n+1}{n}\widetilde{w}^*(t),~\forall 1\leq t\leq T.
     \end{aligned}
\end{equation}
Then, with \eqref{thm1_p1}, the strict and unique Nash equilibrium \eqref{saddpoi} of model \eqref{follower2} becomes
\begin{eqnarray}\label{NE2}
         && \nu_i^*(t):=\frac{\widetilde{w}^*(t)}{\sum_s\widetilde{w}^*(s)}g_i+\frac{1}{(n+1)\sum_s\widetilde{w}^*(s)}\nonumber\\
          &&~~~~~~~~~~~~~\quad\times\sum_{s=1}^T\widetilde{w}^*(s)\widetilde{w}^*(t)\left[\frac{\widetilde{r}^*(s)}{\widetilde{w}^*(s)}-\frac{\widetilde{r}^*(t)}{\widetilde{w}^*(t)}\right]\nonumber\\
          &&~~~~~=\frac{g_i}{\sum_s\widetilde{w}^*(s)}\widetilde{w}^*(t)=\frac{g_i}{g_{\mathcal{N}}}\widetilde{w}^*(t)\nonumber\\
          &&~~~~~=\frac{g_i}{T}+\frac{g_i}{g_{\mathcal{N}}}[w(t)-r(t)]+\frac{g_i}{g_{\mathcal{N}} T}\sum_{s=1}^T[r(s)-w(s)],\nonumber\\
          &&~~~~~\qquad\qquad\qquad~ i=1,\ldots,n,~ t=1,\ldots,T.
\end{eqnarray}

Let $\bm{\pi}^*:=(\pi^*(1),\pi^*(2),\ldots,\pi^*(T))^{\top}$ denote the leader's strategy,  $\bm{\nu}_i^*:=(\nu_i^*(1),\nu_i^*(2),\ldots,\nu_i^*(T))^{\top}$ denote the strategy of follower $i$,
and $\bm{\nu}^*:=(\bm{\nu}_1^*,\bm{\nu}_2^*,\ldots,\bm{\nu}_n^*)^{\top}$ denote the strategy combination of all followers. We have
\begin{proposition}\label{leaderop}
     When the leader's strategy is $\bm{\pi}^*$ and the followers' strategy is $\bm{\nu}^*$, the leader's cost function $u^l(\bm{\pi}^*,\bm{\nu}^*)=0$.
\end{proposition}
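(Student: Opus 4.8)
The plan is to directly verify that the controllable power sequence $c(t)$ induced by the strategy pair $(\bm{\pi}^*,\bm{\nu}^*)$ is constant in $t$, so that its variance $u^l(\bm{\pi}^*,\bm{\nu}^*)$ vanishes. First I would recall from the supply-demand balance constraint in \eqref{leader} that
\begin{equation*}
     c(t)=\nu_{\mathcal{N}}(t)+r(t)-w(t),\qquad t=1,\ldots,T,
\end{equation*}
so it suffices to show that $\nu_{\mathcal{N}}^*(t)+r(t)-w(t)$ does not depend on $t$. Here $\nu_{\mathcal{N}}^*(t)=\sum_{i\in\mathcal{N}}\nu_i^*(t)$ and, since the explicit Nash equilibrium \eqref{NE2} factors as $\nu_i^*(t)=(g_i/g_{\mathcal{N}})\widetilde{w}^*(t)$, summing over $i$ gives simply $\nu_{\mathcal{N}}^*(t)=\widetilde{w}^*(t)$, where $\widetilde{w}^*(t)$ is defined in \eqref{thm1_p1}.

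Next I would substitute the definition of $\widetilde{w}^*(t)$ from \eqref{thm1_p1}:
\begin{equation*}
     \nu_{\mathcal{N}}^*(t)+r(t)-w(t)=\widetilde{w}^*(t)+r(t)-w(t)=\frac{g_{\mathcal{N}}}{T}+\frac{1}{T}\sum_{s=1}^T[r(s)-w(s)],
\end{equation*}
where the terms $w(t)-r(t)$ and $-(w(t)-r(t))$ cancel, leaving an expression that is manifestly independent of $t$. Hence $c(t)\equiv \bar{c}=\frac{g_{\mathcal{N}}}{T}+\frac{1}{T}\sum_{s=1}^T[r(s)-w(s)]$ for all $t$ (consistent with the average computed in Section~\ref{sec2}), and therefore $c(t)-\bar{c}=0$ for every $t$, which yields $u^l(\bm{\pi}^*,\bm{\nu}^*)=\frac{1}{T}\sum_{t=1}^T[c(t)-\bar{c}]^2=0$.

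The only subtlety — and the step I would be most careful about — is making sure that the simplification $\nu_i^*(t)=(g_i/g_{\mathcal{N}})\widetilde{w}^*(t)$ in \eqref{NE2} is legitimate, i.e., that the second (double-sum) term in the general equilibrium formula \eqref{saddpoi} genuinely collapses to zero once one uses the specific choice \eqref{cstar}–\eqref{thm1_p1}, under which $\widetilde{r}^*(s)/\widetilde{w}^*(s)=(n+1)/n$ is a constant independent of $s$, so that every bracket $[\widetilde{r}^*(s)/\widetilde{w}^*(s)-\widetilde{r}^*(t)/\widetilde{w}^*(t)]$ vanishes. This is already established in the derivation of \eqref{NE2}, so no real obstacle remains; the argument is essentially a short substitution once Proposition~\ref{saddle} is invoked. (One should also note in passing that Proposition~\ref{saddle} applies because $\widetilde{w}^*(t)>0$ would be guaranteed under the standing inequality conditions assumed for the analytical results, so the equilibrium \eqref{NE2} is indeed the relevant best response.)
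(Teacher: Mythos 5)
Your proposal is correct and follows essentially the same route as the paper: substitute the explicit equilibrium $\nu_i^*(t)=(g_i/g_{\mathcal{N}})\widetilde{w}^*(t)$ into the balance constraint $c(t)=\nu_{\mathcal{N}}(t)+r(t)-w(t)$, observe that the $w(t)-r(t)$ terms cancel so $c(t)\equiv\frac{g_{\mathcal{N}}}{T}+\frac{1}{T}\sum_{s=1}^T[r(s)-w(s)]=\bar{c}$, and conclude the variance is zero. The paper performs exactly this computation (writing $c^*(t)-\bar{c}=0$ directly), so no further comment is needed.
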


The proofs of Proposition \ref{saddle}-\ref{leaderop} are put in Subsection \ref{PTheorem0}.

Based on Proposition \ref{saddle}-\ref{leaderop}, we now demonstrate that the Stackelberg game \eqref{leader}-\eqref{followers} can achieve a perfect SE if the following inequalities hold:
\begin{multline}\label{asum1}
     0<\frac{g_{\mathcal{N}}}{T}+w(t)-r(t)+\frac{1}{T}\sum_{s=1}^T[r(s)-w(s)]=\widetilde{w}^*(t)\\
     \leq\min_{i\in\mathcal{N}}\left\{\frac{\nu_i^{\max}}{g_i}\right\}g_{\mathcal{N}},~~\forall 1\leq t\leq T.
\end{multline}
We show that, under the condition (\ref{asum1}), $\bm{\nu}^*$ is the best-response strategy  to the leader's strategy $\bm{\pi}^*$, and they contribute a perfect SE of the Stackelberg game \eqref{leader}-\eqref{followers}.
Thus, $\pi^*(t)$ can be treated as an optimal pricing formula for the utility company.

\begin{theorem}[Perfect SE with optimal pricing formula $\pi^*(t)$]\label{thm1}
If the inequalities (\ref{asum1}) hold then\\
i) $\bm{\nu}^*$ is a strict and unique Nash equilibrium of model \eqref{followers} when
the leader's strategy is $\bm{\pi}^*$;\\
ii) $(\bm{\pi}^*,\bm{\nu}^*)$ is a perfect SE of the Stackelberg game \eqref{leader}-\eqref{followers}.
\end{theorem}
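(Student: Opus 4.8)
The plan is to deduce Theorem \ref{thm1} from Propositions \ref{saddle}--\ref{leaderop} by specializing the adjustable factors to the choice \eqref{cstar} and then checking that the resulting follower profile \eqref{NE2} is feasible not just for the relaxed follower game \eqref{follower2} but for the fully constrained game \eqref{followers}. For part i), I would first note that the left-hand inequality in \eqref{asum1} is precisely the requirement $\widetilde{w}^*(t)>0$ for all $t$, so Propositions \ref{saddle} and \ref{oristruni} both apply with $a_1=a_1^*$, $a_2=a_2^*$, $\bm{\pi}=\bm{\pi}^*$. Proposition \ref{saddle} then says the unique strict Nash equilibrium of \eqref{follower2} is given by \eqref{saddpoi}, which collapses (because $\widetilde{r}^*(t)=\tfrac{n+1}{n}\widetilde{w}^*(t)$ makes the bracketed correction term vanish) to the explicit profile $\nu_i^*(t)=\tfrac{g_i}{g_{\mathcal{N}}}\widetilde{w}^*(t)$ recorded in \eqref{NE2}.

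Next I would verify feasibility of $\bm{\nu}^*$ for \eqref{followers}. The equality constraint holds because $\sum_{t=1}^T\widetilde{w}^*(t)=g_{\mathcal{N}}$, a one-line telescoping from \eqref{thm1_p1} (the two $\sum_t[r-w]$ contributions cancel), whence $\sum_t\nu_i^*(t)=\tfrac{g_i}{g_{\mathcal{N}}}\sum_t\widetilde{w}^*(t)=g_i$. The lower bound $\nu_i^*(t)\ge 0$ follows from $\widetilde{w}^*(t)>0$ and $g_i,g_{\mathcal{N}}>0$, and the upper bound $\nu_i^*(t)\le\nu_i^{\max}$ is exactly the content of the right-hand inequality in \eqref{asum1}, since $\widetilde{w}^*(t)\le\min_{j\in\mathcal{N}}\{\nu_j^{\max}/g_j\}\,g_{\mathcal{N}}\le(\nu_i^{\max}/g_i)\,g_{\mathcal{N}}$ for every $i$. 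The conceptual heart of part i) is then the relaxation argument: the feasible set of follower $i$'s problem in \eqref{followers} is contained in that of \eqref{follower2}, so, fixing $\bm{\nu}_{-i}^*$, the point $\bm{\nu}_i^*$ — which globally minimizes $u_i^f(\bm{\pi}^*,\cdot,\bm{\nu}_{-i}^*)$ over the larger set and lies in the smaller one — is a fortiori the best response within \eqref{followers}; strictness transfers verbatim from Proposition \ref{saddle} because any competing feasible $\bm{\nu}_i'\neq\bm{\nu}_i^*$ for \eqref{followers} is also feasible for \eqref{follower2}. Hence $\bm{\nu}^*$ is a strict Nash equilibrium of \eqref{followers}, and Proposition \ref{oristruni}'s uniqueness upgrades this to ``the unique strict Nash equilibrium,'' proving i).

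For part ii), I would use that $u^l(\bm{\pi},\bm{\nu})=\tfrac1T\sum_t[c(t)-\bar c]^2$ is a sample variance, hence $\ge 0$ for every leader strategy and every follower profile. By part i) the followers' best response to $\bm{\pi}^*$ is $\bm{\nu}(\bm{\pi}^*)=\bm{\nu}^*$, and Proposition \ref{leaderop} gives $u^l(\bm{\pi}^*,\bm{\nu}^*)=0$, so $\bm{\pi}^*$ attains the global lower bound $0$ of $\min_{\bm{\pi}}u^l(\bm{\pi},\bm{\nu}(\bm{\pi}))$; thus $\hat{\bm{\pi}}=\bm{\pi}^*$, $\hat{\bm{\nu}}=\bm{\nu}^*$, and $(\bm{\pi}^*,\bm{\nu}^*)$ is a Stackelberg equilibrium, which is perfect by the definition in Subsection \ref{sec3_1} since the leader's cost is $0$.

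The main obstacle I anticipate is not depth but bookkeeping: one must confirm that the simplification $\nu_i^*(t)=\tfrac{g_i}{g_{\mathcal{N}}}\widetilde{w}^*(t)$ in \eqref{NE2} is indeed correct so that the two feasibility checks align exactly with the two halves of \eqref{asum1}, and one must state the relaxation step carefully — that a \emph{strict} global optimum of a relaxed problem which happens to be feasible for the original constrained problem is both optimal and strictly optimal there — so that the strictness and, via Proposition \ref{oristruni}, the uniqueness are genuinely inherited rather than merely asserted.
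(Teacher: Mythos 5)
Your proposal is correct and follows essentially the same route as the paper: apply Proposition \ref{saddle} with the choice \eqref{cstar} to get the strict Nash equilibrium \eqref{NE2} of the relaxed game \eqref{follower2}, use the two halves of \eqref{asum1} to check $0<\nu_i^*(t)\le\nu_i^{\max}$ so that it remains a strict equilibrium of \eqref{followers}, invoke Proposition \ref{oristruni} for uniqueness, and combine with Proposition \ref{leaderop} and the nonnegativity of $u^l$ for part ii). Your explicit spelling-out of the relaxation step and of the identity $\sum_t\widetilde{w}^*(t)=g_{\mathcal{N}}$ only makes precise what the paper's proof leaves implicit.
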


Fig.~\ref{Fig2} shows an example of the curves of $\{w(t),r(t),c(t)\}_{t=1,\ldots,T}$ and $\{\nu_i^*(t)\}_{i=1,\ldots,n;t=1,\ldots,T}$, where we make $n=10$, $T=24$ and they satisfy the inequality \eqref{asum1}, thus presenting a perfect SE profile.
\begin{figure}[htp]
     \centering
     \includegraphics[width=3.2in]{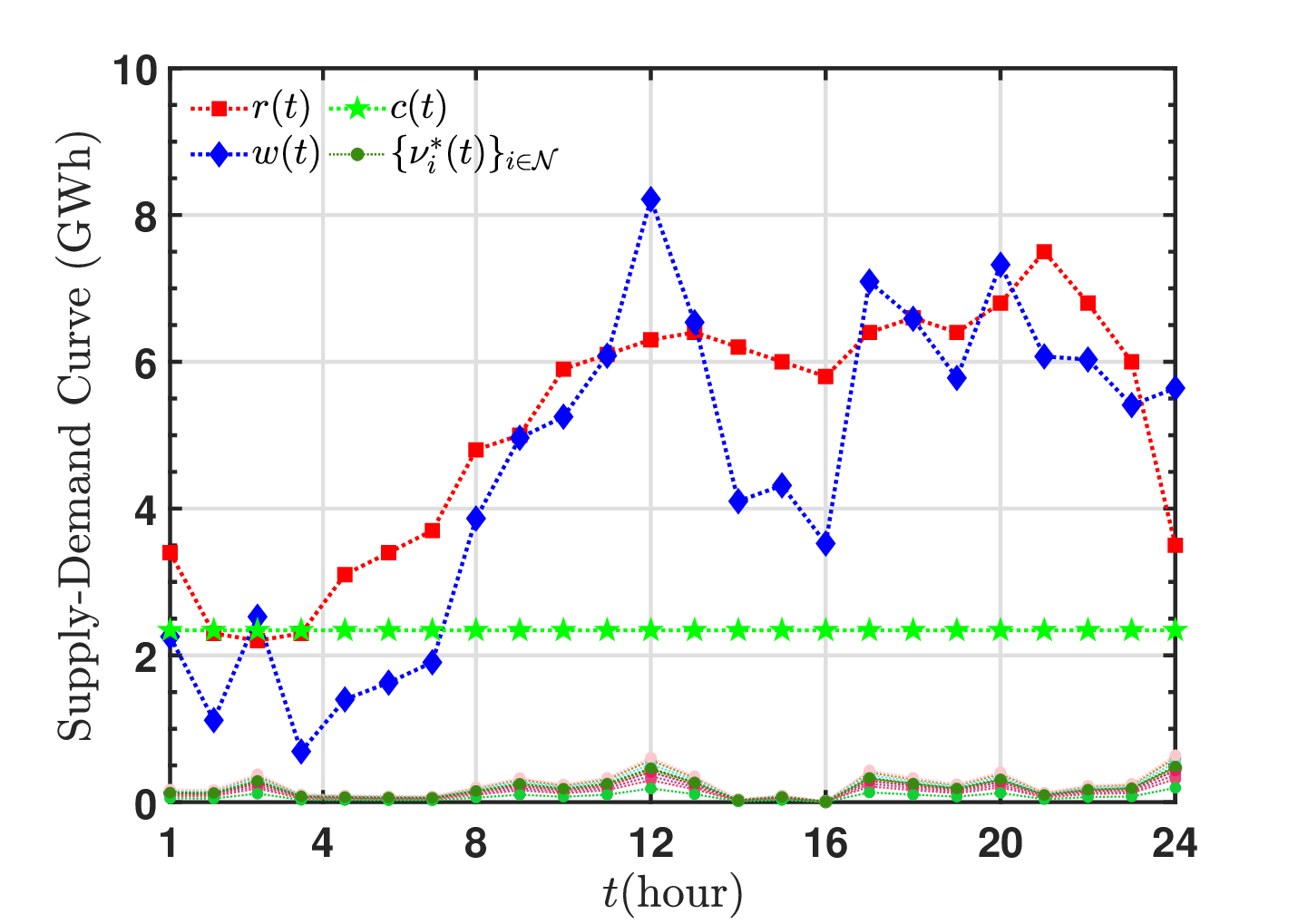}
     \caption{An example of achieving a perfect SE, in which the uncontrollable energy accounts for 65.84\% of energy supply, and flexible users account for 25.29\% of the total energy demand.}
     \label{Fig2}
\end{figure}

\begin{remark}
From Theorem \ref{thm1}, if the utility company adopts the price formula $\pi^*(t)$, all flexible users have  a stable electricity strategy combination $\bm{\nu}^*$, in which
each flexible user will not proactively change its strategy to avoid an increase in electricity cost. Meanwhile, all electricity needs are met, all new energy generation is utilized,
and the controllable power supply $c(t)\equiv const$, thereby social benefits are maximized.
\end{remark}

An interesting question is, if we ignore the stability factors of the grid and only consider the supply-demand balance of electricity, under what conditions can all electricity be provided by new energy sources?
The following corollary provides a solution for this ideal situation.
\begin{corollary}\label{cor1}
Assume that the conditions
     \begin{equation}\label{condi2}
          0<w(t)-r(t)\leq\min_{i\in\mathcal{N}}\left\{\frac{\nu_i^{\max}}{g_i}\right\}g_{\mathcal{N}},~~\forall 1\leq t\leq T,
     \end{equation}
and
\begin{equation}\label{newtotal}
     \sum_{t=1}^T w(t)=g_{\mathcal{N}}+\sum_{t=1}^T r(t)
\end{equation}
hold. Let
\begin{eqnarray*}
&&\bm{\pi}_0^*:=1+\frac{1}{n}+\left(\frac{\nu_{\mathcal{N}}(1)}{w(1)-r(1)},\ldots,\frac{\nu_{\mathcal{N}}(T)}{w(T)-r(T)}\right)^{\top},\\
&&\bm{\chi}_i^*:= \frac{g_i}{g_{\mathcal{N}}}\left(w(1)-r(1),\ldots, w(T)-r(T) \right)^{\top},~i\in\mathcal{N},\\
&&\bm{\chi}^*:=\left(\bm{\chi}_1^*,\bm{\chi}_2^*,\ldots,\bm{\chi}_n^*\right)^{\top}.
\end{eqnarray*}
Then, the Stackelberg game \eqref{leader}-\eqref{followers} has a perfect SE $(\bm{\pi}_0^*,\bm{\chi}^*)$, in which all electricity is provided by uncontrollable energy sources such as wind and photovoltaic power, without energy storage or waste.
\end{corollary}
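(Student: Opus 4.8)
The plan is to show that the extra hypothesis (\ref{newtotal}) makes the quantities $\widetilde{w}^*(t)$ of (\ref{thm1_p1}) collapse to $w(t)-r(t)$, so that Corollary \ref{cor1} is just a specialization of Theorem \ref{thm1}. First I would rewrite (\ref{newtotal}) as $\sum_{t=1}^T[r(t)-w(t)]=-g_{\mathcal{N}}$, hence $\frac{1}{T}\sum_{s=1}^T[r(s)-w(s)]=-\frac{g_{\mathcal{N}}}{T}$. Substituting this into the definition of $\widetilde{w}^*(t)$ in (\ref{thm1_p1}), the terms $\frac{g_{\mathcal{N}}}{T}$ and $-\frac{g_{\mathcal{N}}}{T}$ cancel and we get $\widetilde{w}^*(t)=w(t)-r(t)$ for every $1\le t\le T$. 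With this identity the inequalities (\ref{asum1}) required by Theorem \ref{thm1} read exactly as (\ref{condi2}), so all hypotheses of Theorem \ref{thm1} are in force.

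Next I would substitute $\widetilde{w}^*(t)=w(t)-r(t)$ into the price formula (\ref{priceform}) and into the equilibrium formula (\ref{NE2}). The denominator in (\ref{priceform}) becomes $w(t)-r(t)$, giving $\pi^*(t)=1+\frac1n+\frac{\nu_{\mathcal{N}}(t)}{w(t)-r(t)}$, i.e. $\bm{\pi}^*=\bm{\pi}_0^*$; and (\ref{NE2}) gives $\nu_i^*(t)=\frac{g_i}{g_{\mathcal{N}}}\widetilde{w}^*(t)=\frac{g_i}{g_{\mathcal{N}}}(w(t)-r(t))$, i.e. $\bm{\nu}^*=\bm{\chi}^*$. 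Therefore Theorem \ref{thm1} applied in this situation states precisely that $\bm{\chi}^*$ is the strict and unique Nash equilibrium of the followers under $\bm{\pi}_0^*$ and that $(\bm{\pi}_0^*,\bm{\chi}^*)$ is a perfect SE, which is the first assertion of the corollary.

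It then remains to translate $u^l(\bm{\pi}_0^*,\bm{\chi}^*)=0$ into the claim that renewable generation alone covers all demand. Since the leader's cost is the variance of $\{c(t)\}_{t=1,\ldots,T}$, vanishing cost forces $c(t)\equiv\bar c$; and by the computation of $\sum_{t=1}^T c(t)$ carried out in Section \ref{sec2}, together with (\ref{newtotal}), one has $\sum_{t=1}^T c(t)=g_{\mathcal{N}}+\sum_{t=1}^T[r(t)-w(t)]=0$, so $\bar c=0$ and hence $c(t)=0$ for all $t$. Then the balance identity $\nu_{\mathcal{N}}(t)+r(t)=w(t)+c(t)$ reduces to $\nu_{\mathcal{N}}(t)+r(t)=w(t)$, meaning that at each slot the uncontrollable generation exactly matches total demand, with no controllable power, no storage, and no curtailment. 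I do not expect a genuine obstacle: the argument is a substitution plus an appeal to Theorem \ref{thm1}. The only thing needing a word of care is that the feasibility constraints $0\le\chi_i^*(t)\le\nu_i^{\max}$ and $\sum_{t=1}^T\chi_i^*(t)=g_i$ hold — the first from (\ref{condi2}) bounding $w(t)-r(t)$ above and below, the second from $\sum_{t=1}^T[w(t)-r(t)]=g_{\mathcal{N}}$ by (\ref{newtotal}) — but both are already subsumed in the statement of Theorem \ref{thm1}, so invoking that theorem suffices.
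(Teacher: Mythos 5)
Your proposal is correct and follows essentially the same route as the paper: use \eqref{newtotal} to get $\frac{1}{T}\sum_{s=1}^T[r(s)-w(s)]=-\frac{g_{\mathcal{N}}}{T}$, so that $\widetilde{w}^*(t)=w(t)-r(t)$, whence \eqref{priceform} and \eqref{NE2} reduce to $\bm{\pi}_0^*$ and $\bm{\chi}^*$ and \eqref{asum1} reduces to \eqref{condi2}, after which Theorem \ref{thm1} is invoked directly. Your closing step making explicit that $u^l=0$ together with $\sum_t c(t)=g_{\mathcal{N}}+\sum_t[r(t)-w(t)]=0$ forces $c(t)\equiv 0$ is left implicit in the paper but is a correct and welcome addition.
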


Fig.~\ref{Fig3} shows an example of the curves of $\{w(t),r(t)\}_{t=1,\ldots,T}$ and $\{\nu_i^*(t)\}_{i=1,\ldots,n;t=1,\ldots,T}$, which satisfies the conditions in Corollary \ref{cor1}. Therefore, it also presents a perfect SE profile, where all electricity is provided by uncontrollable energy sources.
\begin{figure}[htp]
     \centering
     \includegraphics[width=3.2in]{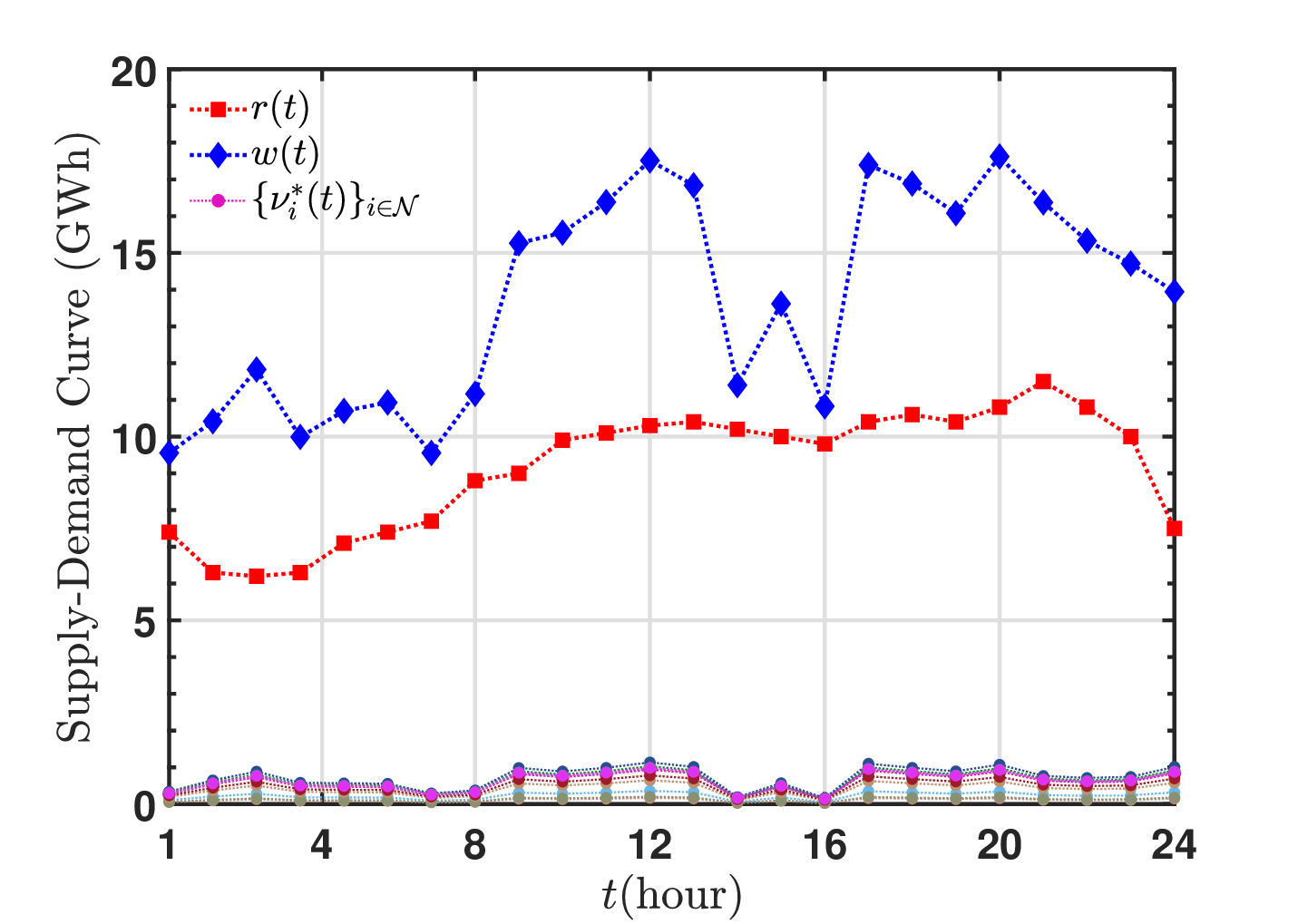}
     \caption{An ideal situation of perfect SE,  in which the uncontrollable energy accounts for 100\% of energy supply, and flexible users account for 33.64\% of the total energy demand.}
     \label{Fig3}
\end{figure}

\begin{remark}
One possible application of  Corollary \ref{cor1} is to plan the electricity consumption and power of flexible electricity users.
For example,  to absorb excess new energy generation we need build a hydrogen production plant. Using  Corollary \ref{cor1}, we can get
the required amount of hydrogen production power and capacity to achieve complete consumption of new energy.
\end{remark}

The price formula \eqref{priceform} has a defect that the item $\frac{1}{T}\sum_{s=1}^T[r(s)-w(s)]$ requires the future information of $r(t)$ and $w(t)$.
To avoid this defect we use the prediction value $b$ instead of $\frac{1}{T}\sum_{s=1}^T[r(s)-w(s)]$, and let
\begin{align}\label{priceformb}
\pi(b,t):=1+\frac{1}{n}+\frac{\nu_{\mathcal{N}}(t)}{w(t)+\dfrac{g_{\mathcal{N}}}{T}-r(t)+b},~~\forall 1\leq t\leq T.
\end{align}
Denote
\begin{equation}\label{preerr}
     \delta=\delta(b):=b-\frac{1}{T}\sum_{s=1}^T[r(s)-w(s)]
\end{equation}
as the prediction error.
If the utility company adopts the price formula $\pi(b,t)$,
a natural problem is that how much does the power generation cost of utility company increase compared to the perfect SE in Theorem \ref{thm1}? The following theorem provides a precise result.
\begin{theorem}\label{estcor}
     Let
     \begin{eqnarray*}
     \begin{aligned}
     \bm{\pi}_0&:=(\pi(b,1),\pi(b,2),\ldots,\pi(b,T))^{\top},\\
     \wideparen{w}(t)&:=w(t)-r(t)+\frac{g_{\mathcal{N}}}{T}+b,~ t=1,\ldots,T,\\
     \bm{\sigma}_i&:=\frac{g_i}{g_{\mathcal{N}}+T\delta}\left(\wideparen{w}(1),\ldots,\wideparen{w}(T)\right)^{\top},~i=1,\ldots,n,\\
     \bm{\sigma}&:=\left(\bm{\sigma}_1,\bm{\sigma}_2,\ldots,\bm{\sigma}_n\right)^{\top},
     \end{aligned}
     \end{eqnarray*}
     and assume that
     \begin{multline}\label{condi3}
          0<\wideparen{w}(t)\leq (g_{\mathcal{N}}+T\delta) \min_{i\in\mathcal{N}}\left\{\frac{\nu_i^{\max}}{g_i}\right\},
          ~~\forall 1\leq t\leq T.
     \end{multline}
     Then, $\bm{\sigma}$ is a strict and unique Nash equilibrium of model \eqref{followers} when
     the leader's strategy is $\bm{\pi}_0$, while the cost function of the leader is
     \begin{equation}\label{restcor2}
          u^l(\bm{\pi}_0,\bm{\sigma})=\left(\frac{T\delta}{g_{\mathcal{N}}+T\delta}\right)^2\mbox{Var}\left(\{w(t)-r(t)\}_{t=1}^T\right),
     \end{equation}
     where
     \begin{multline*}
          \mbox{Var}\left(\{w(t)-r(t)\}_{t=1}^T\right):=\\
          \frac{1}{T}\sum_{t=1}^T\bigg(w(t)-r(t)-\frac{1}{T}\sum_{s=1}^T[w(s)-r(s)]\bigg)^2
     \end{multline*}
     denotes the variance of $\{w(t)-r(t)\}_{t=1}^T$.
\end{theorem}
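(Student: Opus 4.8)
The plan is to recognize $\pi(b,t)$ as a special case of the general price form \eqref{priceform0} and then lean on Propositions \ref{saddle} and \ref{oristruni}. Concretely, I would set $a_1(t):=\frac{g_{\mathcal{N}}}{T}-r(t)+b$ and $a_2(t):=\frac{n+1}{n}\wideparen{w}(t)-r(t)$, so that in the notation \eqref{wandr} one has $\widetilde{w}(t)=w(t)+a_1(t)=\wideparen{w}(t)$ and $\widetilde{r}(t)=r(t)+a_2(t)=\frac{n+1}{n}\wideparen{w}(t)$; substituting these into \eqref{priceform0} gives $\pi(t)=\frac{\nu_{\mathcal{N}}(t)+\widetilde r(t)}{\widetilde w(t)}=1+\frac1n+\frac{\nu_{\mathcal{N}}(t)}{\wideparen{w}(t)}=\pi(b,t)$, so the leader's strategy $\bm{\pi}_0$ is exactly \eqref{priceform0} with this choice. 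Using the definition \eqref{preerr} of $\delta$ and summing over $t$, I get $\sum_{t=1}^T\wideparen{w}(t)=\sum_t[w(t)-r(t)]+g_{\mathcal{N}}+Tb=g_{\mathcal{N}}+T\delta$, and since each $\wideparen{w}(t)>0$ by \eqref{condi3}, this also shows $g_{\mathcal{N}}+T\delta>0$.

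Next I would invoke Proposition \ref{saddle} with $\widetilde w(t)=\wideparen{w}(t)>0$: the relaxed game \eqref{follower2} has the strict and unique Nash equilibrium \eqref{saddpoi}. Because $\widetilde r(t)/\widetilde w(t)=\frac{n+1}{n}$ is independent of $t$, every bracket $\big[\widetilde r(s)/\widetilde w(s)-\widetilde r(t)/\widetilde w(t)\big]$ vanishes, so \eqref{saddpoi} collapses to $\nu_i(t)=\frac{\wideparen{w}(t)}{g_{\mathcal{N}}+T\delta}\,g_i=\sigma_i(t)$. I would then check that $\bm\sigma$ is admissible for the fully constrained model \eqref{followers}: the equality constraint holds since $\sum_t\sigma_i(t)=\frac{g_i}{g_{\mathcal{N}}+T\delta}\sum_t\wideparen w(t)=g_i$; nonnegativity holds since $\wideparen w(t)>0$ and $g_{\mathcal{N}}+T\delta>0$; and the upper bound follows from the right-hand inequality in \eqref{condi3}, which gives $\sigma_i(t)=\frac{\wideparen w(t)}{g_{\mathcal{N}}+T\delta}\,g_i\le\min_{j\in\mathcal{N}}\{\nu_j^{\max}/g_j\}\,g_i\le\nu_i^{\max}$.

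The key step is to transfer optimality from \eqref{follower2} to \eqref{followers}. For each follower $i$, the feasible set in \eqref{followers} is contained in that of \eqref{follower2}; since $\bm\sigma_i$ minimizes $u_i^f(\bm\pi_0,\cdot,\bm\sigma_{-i})$ over the larger set and lies in the smaller one, it also minimizes over the smaller set, so $\bm\sigma$ is a Nash equilibrium of \eqref{followers}, and the strict inequalities supplied by Proposition \ref{saddle} restrict to strict inequalities over this smaller set. Uniqueness then follows because Proposition \ref{oristruni} — applicable since $\widetilde w(t)=\wideparen w(t)>0$ — guarantees \eqref{followers} has a strict and unique Nash equilibrium, which therefore must be $\bm\sigma$. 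I expect this relaxation/transfer argument together with the sign bookkeeping for $g_{\mathcal{N}}+T\delta$ to be the only genuinely delicate points; the rest is substitution.

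Finally, for the leader's cost, with all followers at $\bm\sigma$ we have $\nu_{\mathcal{N}}(t)=\sum_i\sigma_i(t)=\frac{g_{\mathcal{N}}}{g_{\mathcal{N}}+T\delta}\wideparen w(t)$, so the balance constraint gives $c(t)=\nu_{\mathcal{N}}(t)+r(t)-w(t)$. Writing $d(t):=w(t)-r(t)$ and $\wideparen w(t)=d(t)+\frac{g_{\mathcal{N}}}{T}+b$, a short computation yields $c(t)=\big(\tfrac{g_{\mathcal{N}}}{g_{\mathcal{N}}+T\delta}-1\big)d(t)+\text{const}=-\tfrac{T\delta}{g_{\mathcal{N}}+T\delta}\,d(t)+\text{const}$. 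Since $u^l(\bm\pi_0,\bm\sigma)=\frac1T\sum_{t=1}^T(c(t)-\bar c)^2=\mathrm{Var}(\{c(t)\}_{t=1}^T)$, and variance is invariant under additive constants and scales by the square of the multiplier, this equals $\big(\tfrac{T\delta}{g_{\mathcal{N}}+T\delta}\big)^2\mathrm{Var}(\{w(t)-r(t)\}_{t=1}^T)$, which is precisely \eqref{restcor2}.
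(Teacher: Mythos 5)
Your proposal is correct and follows essentially the same route as the paper: cast $\pi(b,t)$ as the price form \eqref{priceform0} with $\widetilde{w}(t)=\wideparen{w}(t)$ and $\widetilde{r}(t)=\tfrac{n+1}{n}\wideparen{w}(t)$, collapse \eqref{saddpoi} to $\bm{\sigma}$, verify the bound constraints via \eqref{condi3} to transfer the strict equilibrium from \eqref{follower2} to \eqref{followers}, invoke Proposition \ref{oristruni} for uniqueness, and then compute the leader's cost by substituting $\nu_{\mathcal{N}}(t)$ into the balance constraint. Your explicit bookkeeping of $\sum_t\wideparen{w}(t)=g_{\mathcal{N}}+T\delta>0$ and the variance-scaling shortcut are only cosmetic differences from the paper's direct computation of $c(t)-\bar{c}$.
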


The proofs of Theorem \ref{thm1}, Corollary \ref{cor1} and Theorem \ref{estcor} are put in Subsection \ref{PTheorem1}.

\begin{figure}[htp]
     \centering
     \includegraphics[width=3.48in]{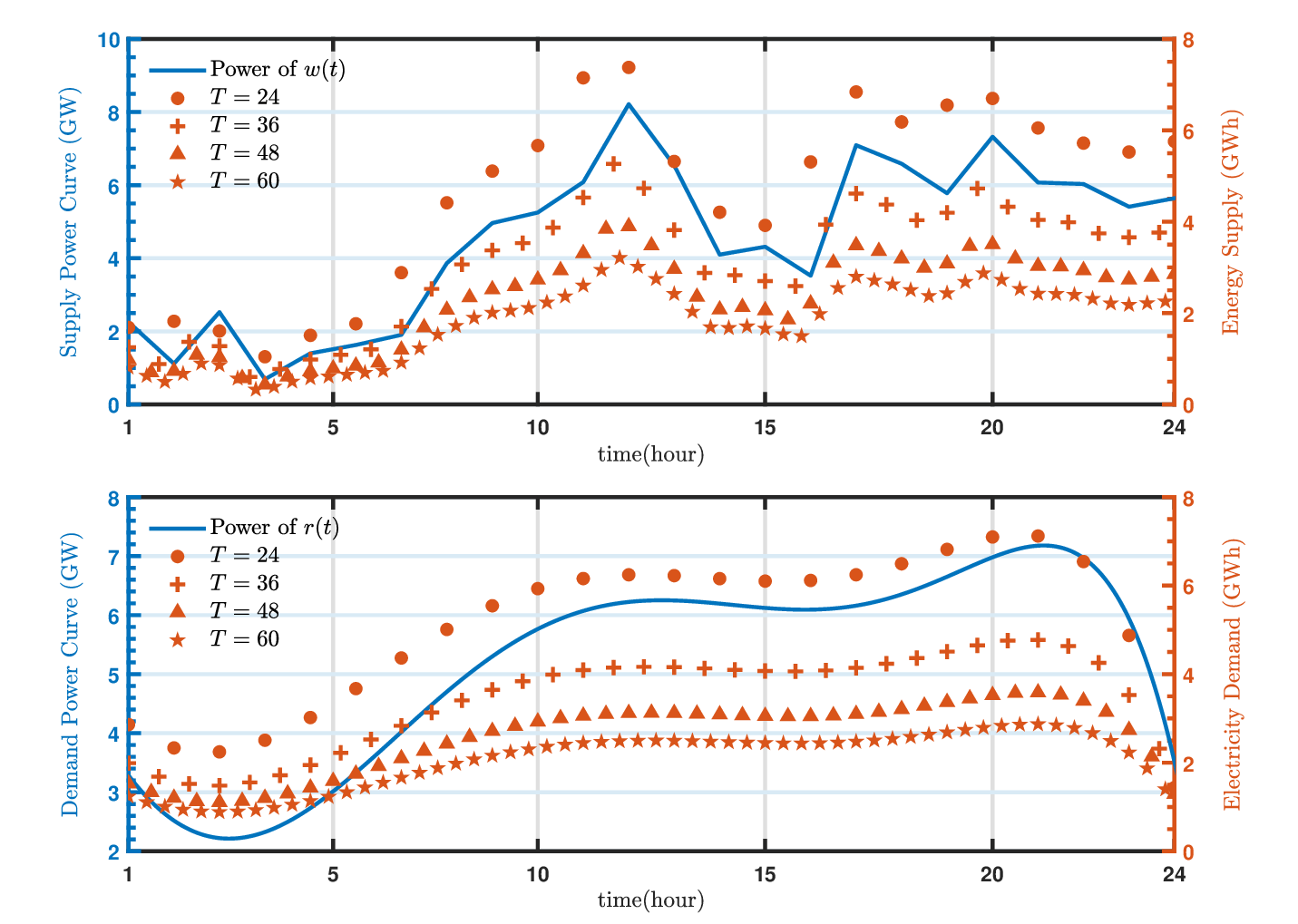}
     \caption{A numerical case of the new energy generation  (top) and the regular users' electricity consumption (bottom) in $24$ hours.
  The blue curves denote the power curves of new energy generation (top) and regular users' electricity consumption (bottom), while
       each scatter plot ($\bullet$ {\small\bf +} $\blacktriangle$ $\star$) denotes the  new energy generation (top) or regular users’ electricity consumption (bottom) at each sampling slot under different sampling numbers $T$.}
     \label{Fig6}
\end{figure}

From Theorem \ref{estcor}, an interesting conclusion can be deduced that under the same prediction accuracy of new energy generation and regular user electricity consumption, the utility company's cost function $u^l(\bm{\pi}_0,\bm{\sigma})$ under the Nash equilibrium can be reduced greatly by shortening the length of the time slot, or increasing the sampling number $T$. In fact, when the prediction value of $\sum_s [r(s)-w(s)]$ in a certain time window (such as $24$ hours) remains unchanged,
if we increase the sampling number from $T$ to $2T$, then the prediction error $\delta$ will decrease to half by \eqref{preerr}.
On the other hand, we recall that $w(t)$ and $r(t)$ denote the new energy generation and regular user electricity consumption at time slot $t$ respectively,
so if we decrease the length of each time slot to half of the original, the values of $w(\cdot)$ and $r(\cdot)$ will also be down by nearly half at corresponding times.
By (\ref{restcor2}), the leader's cost function  $u^l(\bm{\pi}_0,\bm{\sigma})$ will
reduce to nearly $1/4$ of the original.

To verify this conclusion we make some simulations. We predict that the $24$-hour  new energy generation  and  regular users' electricity consumption are $125$GWh and $120$GWh respectively in a certain area.
However, their real data are $110.1$GWh and  $121.1$GWh respectively, whose power distributions are shown in Fig.~\ref{Fig6}.
Then, by \eqref{preerr} the difference between the prediction value and the actual value of $\sum_s [r(s)-w(s)]$ is
\begin{equation*}
     \begin{aligned}
          T\delta &=Tb-\sum_s [r(s)-w(s)]\\
                  &=(120-125)-(121.1-110.1)=-16\mbox{GWh}.
     \end{aligned}
\end{equation*}
Besides, the total demand of flexible electricity users is
$g_{\mathcal{N}}=41.6$GWh in $24$ hours.
 With above data, we calculate the prediction error $\delta$, the variance of $\{w(t)-r(t)\}_{t=1}^T$, and the cost function $u^l(\bm{\pi}_0,\bm{\sigma})$ of the utility company for different sampling numbers $T$, as shown in Table \ref{costfuncT}.
According to Table \ref{costfuncT}, as the sampling number $T$ increases, the cost function of utility company gradually decreases.

\begin{table}[ht]
     \renewcommand{\arraystretch}{1.5}
     \caption{The cost function $u^l(\bm{\pi}_0,\bm{\sigma})$ for different $T$.}
     \centering
	\begin{tabular}{lcccc}
		\toprule
		$T$ & $24$ & $36$ & $48$ & $60$\\
		\midrule
          $\delta(\mbox{GWh})$ & $-2/3$ & $-4/9$ & $-1/3$ & $-4/15$\\
          $\mbox{Var}(\{w(t)-r(t)\}_{t=1}^T)(\mbox{GWh}^2)$ & $1.74$ & $0.84$ & $0.48$ & $0.31$\\
          $u^l(\bm{\pi}_0,\bm{\sigma})(\mbox{GWh}^2)$ & $0.68$ & $0.33$ & $0.19$ & $0.12$\\
		\bottomrule
	\end{tabular}
     \label{costfuncT}
\end{table}

\renewcommand{\thesection}{\Roman{section}}
\section{Proofs of Main Results\label{sec4}}
\renewcommand{\thesection}{\arabic{section}}

In this section, we provide the proofs of the corresponding theoretical results in Section \ref{sec3}.

To simplify the exposition, for any  $t=1,\ldots,T$ and $i=1,\ldots,n$ we set
\begin{eqnarray}\label{symb1}
\begin{aligned}
&\bm{d}_i:=(\nu_i(1),\nu_i(2),\ldots,\nu_i(T-1))^{\top},\\
&h_i(t):=\frac{1}{\widetilde{w}(t)}\big[\sum_{j\neq i}\nu_j(t)+\widetilde{r}(t)\big],
\end{aligned}
\end{eqnarray}
and
\begin{equation}\label{symb2}
     \begin{aligned}
          \bm{\mu}_i:&=
     \begin{pmatrix}
          h_i(1)-h_i(T)-\dfrac{2g_i}{\widetilde{w}(T)}\\
          h_i(2)-h_i(T)-\dfrac{2g_i}{\widetilde{w}(T)}\\
          \vdots\\
          h_i(T-1)-h_i(T)-\dfrac{2g_i}{\widetilde{w}(T)}\\
     \end{pmatrix},\\
     \mathbf{C}:&=
     \begin{pmatrix}
          \frac{1}{\widetilde{w}(1)} & 0 & \cdots & 0\\
          0 & \frac{1}{\widetilde{w}(2)} & \cdots & 0\\
          \vdots & \vdots & \ddots & \vdots\\
          0 & 0 & \cdots & \frac{1}{\widetilde{w}(T-1)}\\
     \end{pmatrix}+\frac{1}{\widetilde{w}(T)}\mathbf{1}\mathbf{1}^{\top}.
     \end{aligned}
\end{equation}
In the following discussion, the definitions of variables $\bm{\bar{d}}_i$ and $\bm{d}_i'$ are the same as $\bm{d}_i$ in \eqref{symb1} after replacing $\nu_i(t)$ with $\bar{\nu}_i(t)$ and $\nu_i'(t)$.

\subsection{Proofs of Propositions \ref{saddle}-\ref{leaderop}}\label{PTheorem0}

One important factor why we chose \eqref{priceform0} as the leader's strategy is that under this strategy, the objective function of each follower is a strictly convex quadratic function, which can be formulated as the following lemma:
\begin{lemma}\label{bianxing}
Assume that the leader's strategy adopts \eqref{priceform0}, and each follower $i$'s  strategy $\bm{\nu}_i$ satisfies $\bm{1}^{\top} \bm{\nu}_i=g_i$. Then,
     the objective function $u_i^f(\bm{\pi},\bm{\nu}_i,\bm{\nu}_{-i})$ of each follower $i$ in model (\ref{followers}) can be written as
     \begin{equation}\label{quafun}
          u_i^f(\bm{\pi},\bm{\nu}_i,\bm{\nu}_{-i})=\bm{d}_i^\top \mathbf{C}\bm{d}_i+\bm{\mu}_i^{\top}\bm{d}_i+h_i(T)g_i+\frac{g_i^2}{\widetilde{w}(T)}.
     \end{equation}
In addition, if  $\widetilde{w}(t)>0$ for any $1\leq t\leq T$, then $u_i^f(\bm{\pi},\bm{\nu}_i,\bm{\nu}_{-i})$ is a strictly convex quadratic function.
\end{lemma}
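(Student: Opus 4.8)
The plan is to substitute the price formula \eqref{priceform0} directly into the follower's cost $u_i^f = \sum_{t=1}^T \pi(t)\,\nu_i(t)$, use the constraint $\bm 1^\top\bm\nu_i = g_i$ to eliminate the last coordinate $\nu_i(T) = g_i - \sum_{t=1}^{T-1}\nu_i(t)$, and then collect terms into the quadratic form \eqref{quafun}. First I would write, for each $t$,
\begin{equation*}
\pi(t)\,\nu_i(t)=\frac{\nu_{\mathcal N}(t)+\widetilde r(t)}{\widetilde w(t)}\,\nu_i(t)
=\frac{\nu_i(t)^2}{\widetilde w(t)}+\Big(\tfrac{1}{\widetilde w(t)}\big[\textstyle\sum_{j\neq i}\nu_j(t)+\widetilde r(t)\big]\Big)\nu_i(t)
=\frac{\nu_i(t)^2}{\widetilde w(t)}+h_i(t)\,\nu_i(t),
\end{equation*}
using the definitions in \eqref{symb1}–\eqref{wandr}. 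Summing over $t$ gives $u_i^f = \sum_{t=1}^T \nu_i(t)^2/\widetilde w(t) + \sum_{t=1}^T h_i(t)\nu_i(t)$, which is manifestly quadratic in $\bm\nu_i$ before imposing the budget constraint.

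Next I would impose $\nu_i(T) = g_i - \bm 1^\top \bm d_i$ and expand the two sums. The diagonal part contributes $\sum_{t=1}^{T-1}\nu_i(t)^2/\widetilde w(t)$ from the first $T-1$ terms, plus $(g_i-\bm 1^\top\bm d_i)^2/\widetilde w(T)$ from the last term; expanding the square yields $\bm d_i^\top\big(\mathrm{diag}(1/\widetilde w(t)) + \tfrac{1}{\widetilde w(T)}\bm 1\bm 1^\top\big)\bm d_i - \tfrac{2g_i}{\widetilde w(T)}\bm 1^\top\bm d_i + \tfrac{g_i^2}{\widetilde w(T)}$, which produces the matrix $\mathbf C$ of \eqref{symb2} exactly. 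The linear part $\sum_{t=1}^T h_i(t)\nu_i(t)$ becomes $\sum_{t=1}^{T-1} h_i(t)\nu_i(t) + h_i(T)(g_i - \bm 1^\top\bm d_i) = \sum_{t=1}^{T-1}\big(h_i(t)-h_i(T)\big)\nu_i(t) + h_i(T)g_i$. Combining the linear-in-$\bm d_i$ contributions from both parts gives the coefficient vector $\big(h_i(t)-h_i(T)-\tfrac{2g_i}{\widetilde w(T)}\big)_{t=1}^{T-1} = \bm\mu_i$, and the constant terms are $h_i(T)g_i + g_i^2/\widetilde w(T)$, matching \eqref{quafun}. One subtlety to note: $h_i(t)$ itself depends on $\bm\nu_{-i}$ but not on $\bm\nu_i$, so it is genuinely a constant vector from follower $i$'s perspective, and treating $\bm\mu_i$, $h_i(T)$ as data (not variables) in the quadratic form is legitimate.

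For strict convexity under $\widetilde w(t)>0$, it suffices to show $\mathbf C \succ 0$. I would argue that $\mathrm{diag}(1/\widetilde w(1),\ldots,1/\widetilde w(T-1))$ is positive definite since each $1/\widetilde w(t)>0$, and $\tfrac{1}{\widetilde w(T)}\bm 1\bm 1^\top$ is positive semidefinite (rank-one, with $\bm x^\top\bm 1\bm 1^\top\bm x = (\bm 1^\top\bm x)^2\geq 0$); the sum of a positive definite and a positive semidefinite matrix is positive definite. Hence $\bm d_i^\top\mathbf C\bm d_i > 0$ for all $\bm d_i\neq \bm 0$, so $u_i^f$ is a strictly convex quadratic in $\bm d_i$. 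I do not anticipate a genuine obstacle here — the only place demanding care is the bookkeeping when substituting out $\nu_i(T)$ and verifying that the cross terms from $(g_i-\bm 1^\top\bm d_i)^2$ assemble into precisely the $\tfrac{1}{\widetilde w(T)}\bm 1\bm 1^\top$ block of $\mathbf C$ and the $-\tfrac{2g_i}{\widetilde w(T)}$ shift in $\bm\mu_i$, rather than anything structural.
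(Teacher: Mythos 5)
Your proposal is correct and follows essentially the same route as the paper: substitute the price formula, eliminate $\nu_i(T)$ via the budget constraint, collect terms into $\bm{d}_i^{\top}\mathbf{C}\bm{d}_i+\bm{\mu}_i^{\top}\bm{d}_i+h_i(T)g_i+g_i^2/\widetilde{w}(T)$, and conclude strict convexity from $\mathbf{C}\succ 0$. The paper verifies positive definiteness by writing out $\bm{x}^{\top}\mathbf{C}\bm{x}=\sum_{t=1}^{T-1}x_t^2/\widetilde{w}(t)+\bigl(\sum_{t=1}^{T-1}x_t\bigr)^2/\widetilde{w}(T)>0$ directly, which is the same observation as your ``positive definite plus positive semidefinite'' argument.
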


The proof of Lemma \ref{bianxing} is postponed to Appendix \ref{appA}.

According to Lemma \ref{bianxing} and the convex analysis method \cite{borwein2006convex}, the objective function of each follower $i$ has a unique global minimum point $\bm{\bar{d}}_i$ satisfying
\begin{equation}\label{eq7_1}
     2\mathbf{C}\bm{\bar{d}}_i+\bm{\mu}_i=0.
\end{equation}
We get that \eqref{saddpoi} is the unique solution of linear equation system formed by uniting \eqref{eq7_1} of all followers.

\begin{lemma}\label{les}
If $\widetilde{w}(t)>0$ for any $1\leq t\leq T$, then the strategy combination \eqref{saddpoi} is the unique solution of linear equation system
\begin{equation*}
  2\mathbf{C}\bm{\bar{d}}_i+\bm{\mu}_i=0,~~i=1,2,\ldots,n.
\end{equation*}
\end{lemma}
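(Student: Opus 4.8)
The plan is to read the two displayed families of equations as a single linear system in the $n(T-1)$ unknowns $\bm{\bar{d}}_1,\ldots,\bm{\bar{d}}_n$, prove that its coefficient matrix is nonsingular (which yields existence and uniqueness of a solution at once), and then verify by substitution that \eqref{saddpoi} is that solution. The only structural fact about $\mathbf{C}$ in \eqref{symb2} that is needed is that it is positive definite when $\widetilde{w}(t)>0$ for all $t$: it is the sum of the positive diagonal matrix $\mathrm{diag}(1/\widetilde{w}(1),\ldots,1/\widetilde{w}(T-1))$ and the positive semidefinite matrix $\frac{1}{\widetilde{w}(T)}\mathbf{1}\mathbf{1}^\top$ (this is exactly the positive-definiteness that makes each $u_i^f$ strictly convex in Lemma \ref{bianxing}), hence invertible.

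The first step is to put $2\mathbf{C}\bm{\bar{d}}_i+\bm{\mu}_i$ into a normal form that exposes the coupling between players. Set $\bm{\bar{D}}:=\sum_{j=1}^n\bm{\bar{d}}_j$ and use the balance constraint in the form $\bar\nu_j(T)=g_j-\mathbf{1}^\top\bm{\bar{d}}_j$ to eliminate the terminal demands hidden inside $h_i(T)$. A direct computation then shows that, for each $i$, the left-hand side of the $i$-th equation equals $\mathbf{C}\,(\bm{\bar{d}}_i+\bm{\bar{D}})+\bm{q}_i$, where $\bm{q}_i$ is the \emph{fixed} vector with $t$-th entry $\frac{\widetilde{r}(t)}{\widetilde{w}(t)}-\frac{g_{\mathcal{N}}+g_i+\widetilde{r}(T)}{\widetilde{w}(T)}$. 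Thus the whole system is equivalent to $\mathbf{C}\,(\bm{\bar{d}}_i+\bm{\bar{D}})=-\bm{q}_i$ for $i=1,\ldots,n$. Summing over $i$ gives $(n+1)\mathbf{C}\bm{\bar{D}}=-\sum_{j}\bm{q}_j$, which determines $\bm{\bar{D}}$ uniquely by invertibility of $\mathbf{C}$, and then each $\bm{\bar{d}}_i=-\mathbf{C}^{-1}\bm{q}_i-\bm{\bar{D}}$ is uniquely determined; equivalently, the homogeneous system forces $\bm{\bar{d}}_i=-\bm{\bar{D}}$ for every $i$, hence $\bm{\bar{D}}=0$ and all $\bm{\bar{d}}_i=0$. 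This proves the coefficient matrix is nonsingular, so the system has exactly one solution.

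It then remains to check that \eqref{saddpoi} satisfies $\mathbf{C}\,(\bm{\bar{d}}_i+\bm{\bar{D}})=-\bm{q}_i$. Using the elementary identity $\sum_s\widetilde{w}(s)\cdot\frac{\widetilde{r}(s)}{\widetilde{w}(s)}=\sum_s\widetilde{r}(s)$, one finds from \eqref{saddpoi} that $\frac{\nu_i(t)}{\widetilde{w}(t)}+\frac{1}{n+1}\frac{\widetilde{r}(t)}{\widetilde{w}(t)}$ is independent of $t$; summing that relation over $i$ shows $\frac{\nu_i(t)+\nu_{\mathcal{N}}(t)+\widetilde{r}(t)}{\widetilde{w}(t)}$ is independent of $t$ as well, and together with $\sum_{t=1}^T\nu_i(t)=g_i$ (which \eqref{saddpoi} also satisfies by the same identity) this is precisely the content of $\mathbf{C}\,(\bm{\bar{d}}_i+\bm{\bar{D}})=-\bm{q}_i$. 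One may alternatively verify the raw form $2\mathbf{C}\bm{\bar{d}}_i+\bm{\mu}_i=0$: the same identities give $h_i(t)-h_i(T)=\frac{2}{n+1}\big(\frac{\widetilde{r}(t)}{\widetilde{w}(t)}-\frac{\widetilde{r}(T)}{\widetilde{w}(T)}\big)$, after which the equation collapses to the $t$-independence of $\frac{\nu_i(t)+\frac{1}{n+1}\widetilde{r}(t)}{\widetilde{w}(t)}$. The main obstacle is purely bookkeeping: keeping straight which pieces of $\bm{\mu}_i$ are genuinely linear in the unknowns and which become constants after the substitution $\bar\nu_j(T)=g_j-\mathbf{1}^\top\bm{\bar{d}}_j$, so that the telescoping to $\mathbf{C}(\bm{\bar{d}}_i+\bm{\bar{D}})+\bm{q}_i$ is visible; once that normal form is reached, both the nonsingularity argument and the verification of \eqref{saddpoi} are short.
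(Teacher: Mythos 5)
Your proposal is correct, and its first half coincides with the paper's argument: the rewriting of $2\mathbf{C}\bm{\bar{d}}_i+\bm{\mu}_i$ as $\mathbf{C}\big(\bm{\bar{d}}_i+\sum_j\bm{\bar{d}}_j\big)-\widetilde{\bm{g}}_i$ (your $\bm{q}_i$ is exactly $-\widetilde{\bm{g}}_i$ from \eqref{muchange}) and the trick of summing over $i$ to pin down the aggregate $\bm{\bar{D}}$ are precisely the paper's steps \eqref{eq7_3a}--\eqref{eq7_2b}. Where you genuinely diverge is in how the explicit formula \eqref{saddpoi} is tied to the system: the paper proceeds constructively, computing $\mathbf{C}^{-1}$ in closed form via the Sherman--Morrison formula \eqref{Cinva} and grinding through the vector algebra \eqref{dibar} to \emph{derive} \eqref{saddpoi} (including the separate recovery of the $T$-th component via \eqref{nuiT}), whereas you establish uniqueness abstractly — the homogeneous system forces $\bm{\bar{d}}_i=-\bm{\bar{D}}$ for all $i$, hence $(n+1)\bm{\bar{D}}=0$ and all $\bm{\bar{d}}_i=0$, so the square coefficient matrix is nonsingular — and then merely \emph{verify} that \eqref{saddpoi} satisfies $\mathbf{C}(\bm{\bar{d}}_i+\bm{\bar{D}})=\widetilde{\bm{g}}_i$, using the clean observation that this equation is equivalent (given $\sum_t\nu_i(t)=g_i$, which you rightly check for \eqref{saddpoi} so that the elimination of the terminal components is consistent) to $t$-independence of $\big[\nu_i(t)+\nu_{\mathcal{N}}(t)+\widetilde{r}(t)\big]/\widetilde{w}(t)$, which in turn follows from the $t$-independence of $\nu_i(t)/\widetilde{w}(t)+\tfrac{1}{n+1}\widetilde{r}(t)/\widetilde{w}(t)$ read off directly from \eqref{saddpoi}. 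Your route is shorter and avoids both the explicit inverse and the lengthy bookkeeping, at the cost of presupposing the candidate formula rather than explaining how it is found; the paper's longer computation is what actually produces \eqref{saddpoi}, and its explicit $\mathbf{C}^{-1}$ is reused later (e.g.\ in Lemma \ref{global}), which your verification-only argument would not supply.
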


The proof of Lemma \ref{les} is postponed to Appendix \ref{appB}.

To prove the uniqueness of the Nash equilibrium in game model \eqref{follower2} we need the well-known Banach's fixed point theorem:

\begin{lemma}[Banach's Fixed Point Theorem \cite{kantorovich2016functional}]\label{Banach}
     Let $(X,\mbox{dist})$ be a complete metric space and let $\tilde{F}: X\rightarrow X$ be a contraction mapping on $X$, that is, there exists a positive constant $\kappa<1$ such that $$\mbox{dist}(\tilde{F}(x),\tilde{F}(y))\leq\kappa \mbox{dist}(x,y), \forall x, y\in X.$$
     Then $\tilde{F}$ has a unique fixed point $x\in X$ (such that $\tilde{F}(x)=x$).
\end{lemma}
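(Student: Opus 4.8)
The plan is to run the standard Picard iteration. First I would fix an arbitrary $x_0\in X$ and set $x_{k+1}:=\tilde F(x_k)$ for $k\ge 0$. A one-line induction using the contraction inequality gives $\mbox{dist}(x_{k+1},x_k)\le \kappa^{k}\,\mbox{dist}(x_1,x_0)$ for all $k\ge 0$.

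Next I would show that $(x_k)_{k\ge 0}$ is a Cauchy sequence. For any $m>n$, the triangle inequality together with the previous estimate and the geometric series bound yields
\[
\mbox{dist}(x_m,x_n)\le\sum_{k=n}^{m-1}\mbox{dist}(x_{k+1},x_k)\le \mbox{dist}(x_1,x_0)\sum_{k=n}^{m-1}\kappa^{k}\le\frac{\kappa^{n}}{1-\kappa}\,\mbox{dist}(x_1,x_0),
\]
and the right-hand side tends to $0$ as $n\to\infty$ since $0<\kappa<1$. Because $(X,\mbox{dist})$ is complete, the sequence converges to some limit $x\in X$.

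Then I would verify that $x$ is a fixed point and that it is the only one. A contraction map is Lipschitz, hence continuous, so passing to the limit in $x_{k+1}=\tilde F(x_k)$ gives $\tilde F(x)=\tilde F(\lim_k x_k)=\lim_k\tilde F(x_k)=\lim_k x_{k+1}=x$. For uniqueness, if $\tilde F(x)=x$ and $\tilde F(y)=y$, then $\mbox{dist}(x,y)=\mbox{dist}(\tilde F(x),\tilde F(y))\le\kappa\,\mbox{dist}(x,y)$, so $(1-\kappa)\,\mbox{dist}(x,y)\le 0$, which forces $\mbox{dist}(x,y)=0$ and hence $x=y$.

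Since this statement is a textbook fact quoted from \cite{kantorovich2016functional}, one may alternatively just cite it; the self-contained argument above presents no real obstacle. The only points requiring a little care are the uniform-in-$m$ geometric estimate in the Cauchy step (one must sum the tail of the series rather than merely bound consecutive terms) and the observation that the contraction hypothesis itself furnishes the continuity used to interchange $\tilde F$ with the limit.
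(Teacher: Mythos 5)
Your proof is correct. The paper itself gives no argument for this lemma: it is quoted verbatim as a classical result from the cited reference \cite{kantorovich2016functional} and used as a black box in the proof of Proposition \ref{saddle}. Your Picard-iteration argument — the geometric estimate $\mbox{dist}(x_{k+1},x_k)\le\kappa^k\,\mbox{dist}(x_1,x_0)$, the tail-of-series bound establishing the Cauchy property, completeness to extract the limit, continuity of the contraction to pass to the limit in $x_{k+1}=\tilde F(x_k)$, and the one-line uniqueness argument — is the standard textbook proof and is a correct self-contained substitute for the citation, so there is nothing to reconcile with the paper beyond noting that citing the result, as you also observe, would have sufficed.
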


Now we construct a  contraction mapping for the game model \eqref{follower2}.  For each user $i$, we define its strategy hyperplane by $\mathbb{H}_i:=\{\bm{x}\in\mathbb{R}^T: \bm{1}^{\top} \bm{x}=g_i\}$. Set $\mathbb{H}_{-i}:=\mathbb{H}_1\times\cdots\times \mathbb{H}_{i-1}\times\mathbb{H}_{i+1}\times\cdots\times \mathbb{H}_{n}$ and $\mathbb{H}:=\mathbb{H}_1\times\cdots\times\mathbb{H}_{n}$. By Lemma \ref{bianxing} and \eqref{eq7_1}, we know that for each user $i$ and any strategy combination $\bm{\nu}_{-i}\in\mathbb{H}_{-i}$ of other users, the best response strategy of user $i$ in $\mathbb{H}_i$ is unique.
Thus, for any $\bm{\nu}\in\mathbb{H}$, we set the ``sequential'' best response mapping $\bm{F}=(\bm{f}_1,\bm{f}_2,\ldots,\bm{f}_n)^{\top}: \mathbb{H}\rightarrow\mathbb{H}$ with
\begin{eqnarray}\label{seqbs}
&&\bm{f}_i(\bm{\nu}):=\\
&&\mathop{\arg\min}_{\bm{\nu}_i\in\mathbb{H}_i}u_i^f\big(\bm{\pi}, \bm{\nu}_{i},(\bm{f}_1(\bm{\nu}),\ldots,\bm{f}_{i-1}(\bm{\nu}),\bm{\nu}_{i+1},\ldots,\bm{\nu}_{n})^{\top}\big)\nonumber
\end{eqnarray}
for  $i=1,2,\ldots,n$.
We show that:
\begin{lemma}\label{conmap}
$\bm{F}$ is a contraction mapping on  $\mathbb{H}$.
\end{lemma}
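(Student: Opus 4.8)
The plan is to turn the problem into a textbook one: I will show that the sequential best‑response map $\bm{F}$ is affine, and that its difference operator is the block Gauss--Seidel iteration matrix of a symmetric positive definite matrix, so the contraction is immediate. \emph{Step 1 (best response of one follower).} Fix any $\bm{\nu}_{-i}\in\mathbb{H}_{-i}$. By Lemma \ref{bianxing} the map $\bm{\nu}_i\mapsto u_i^f(\bm{\pi},\bm{\nu}_i,\bm{\nu}_{-i})$ is strictly convex on the affine set $\mathbb{H}_i$, so it has a unique minimiser, determined by \eqref{eq7_1} (equivalently by the first‑order Lagrange conditions for $\bm{1}^{\top}\bm{\nu}_i=g_i$). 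Solving these yields, for $t=1,\dots,T$,
\begin{equation*}
\nu_i^{\mathrm{br}}(t)=\tfrac12\big(\lambda_i\,\widetilde{w}(t)-\widetilde{r}(t)\big)-\tfrac12\sum_{j\neq i}\nu_j(t),\qquad \lambda_i=\frac{2g_i+\sum_{j\neq i}g_j+\sum_{s=1}^{T}\widetilde{r}(s)}{\sum_{s=1}^{T}\widetilde{w}(s)}.
\end{equation*}
The decisive point is that $\sum_{t=1}^{T}\sum_{j\neq i}\nu_j(t)=\sum_{j\neq i}g_j$ is a \emph{fixed constant} on $\mathbb{H}_{-i}$, so $\lambda_i$ does not depend on the opponents' actual strategies; hence the only part of $\bm{\nu}_i^{\mathrm{br}}$ that varies with $\bm{\nu}_{-i}$ is the term $-\tfrac12\sum_{j\neq i}\bm{\nu}_j$. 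Therefore $\bm{\nu}_{-i}\mapsto\bm{\nu}_i^{\mathrm{br}}$ is affine with linear part $\bm{\nu}_{-i}\mapsto-\tfrac12\sum_{j\neq i}\bm{\nu}_j$. (As a sanity check, imposing this relation for all $i$ simultaneously reproduces exactly the Nash equilibrium \eqref{saddpoi}.)

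\emph{Step 2 (the difference operator is Gauss--Seidel).} Given $\bm{\nu},\bm{\nu}'\in\mathbb{H}$, set $\bm{\delta}_j:=\bm{\nu}'_j-\bm{\nu}_j$ (so $\bm{1}^{\top}\bm{\delta}_j=0$) and $\bm{\Delta}_i:=\bm{f}_i(\bm{\nu}')-\bm{f}_i(\bm{\nu})$. By \eqref{seqbs}, follower $i$ best‑responds to $(\bm{f}_1(\bm{\nu}),\dots,\bm{f}_{i-1}(\bm{\nu}),\bm{\nu}_{i+1},\dots,\bm{\nu}_n)$, so Step 1 gives
\begin{equation*}
\bm{\Delta}_i=-\tfrac12\Big(\sum_{k<i}\bm{\Delta}_k+\sum_{k>i}\bm{\delta}_k\Big),\qquad i=1,\dots,n.
\end{equation*}
In block form this reads $\big[(\mathbf{I}_n+\mathbf{L})\otimes\mathbf{I}_T\big]\bm{\Delta}=-\big[\mathbf{U}\otimes\mathbf{I}_T\big]\bm{\delta}$, where $\mathbf{L}$ and $\mathbf{U}$ are the strictly lower and strictly upper triangular parts of $\tfrac12(\mathbf{1}_n\mathbf{1}_n^{\top}-\mathbf{I}_n)$; hence $\bm{\Delta}=\mathcal{M}\bm{\delta}$ with $\mathcal{M}=-\big[(\mathbf{I}_n+\mathbf{L})^{-1}\mathbf{U}\big]\otimes\mathbf{I}_T$. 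This $\mathcal{M}$ is precisely the Gauss--Seidel iteration matrix of $\mathbf{A}_0\otimes\mathbf{I}_T$, where $\mathbf{A}_0:=\mathbf{I}_n+\mathbf{L}+\mathbf{U}=\tfrac12(\mathbf{I}_n+\mathbf{1}_n\mathbf{1}_n^{\top})$.

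\emph{Step 3 (contraction).} The matrix $\mathbf{A}_0$ is symmetric positive definite (eigenvalues $\tfrac{n+1}{2}$ and $\tfrac12$, the latter with multiplicity $n-1$) with unit diagonal, so by the classical convergence theorem for Gauss--Seidel on an SPD system (the Ostrowski--Reich theorem with relaxation $\omega=1$) we have $\rho(\mathcal{M})<1$; more precisely, the energy estimate underlying that theorem gives $\|\mathcal{M}\bm{x}\|_{\star}<\|\bm{x}\|_{\star}$ for every $\bm{x}\neq\bm{0}$, where $\|\bm{x}\|_{\star}^2:=\tfrac12\big(\sum_{i=1}^{n}\|\bm{x}_i\|^2+\|\sum_{i=1}^{n}\bm{x}_i\|^2\big)$ is the $(\mathbf{A}_0\otimes\mathbf{I}_T)$‑energy norm, and hence $\kappa:=\|\mathcal{M}\|_{\star}<1$ by compactness of the $\|\cdot\|_{\star}$‑unit sphere. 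Equipping $\mathbb{H}$ with the complete metric $\mathrm{dist}(\bm{\nu},\bm{\nu}'):=\|\bm{\nu}'-\bm{\nu}\|_{\star}$, Steps 1--2 give $\mathrm{dist}(\bm{F}(\bm{\nu}),\bm{F}(\bm{\nu}'))=\|\mathcal{M}(\bm{\nu}'-\bm{\nu})\|_{\star}\le\kappa\,\mathrm{dist}(\bm{\nu},\bm{\nu}')$, so $\bm{F}$ is a contraction on $\mathbb{H}$, as required.

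The main obstacle is Step 1 together with the reduction in Step 2: the crucial observation is that the dependence of every follower's best response on the others collapses to the single uniform term $-\tfrac12\sum_{j\neq i}\bm{\nu}_j$, which hinges on the opponents' total demand $\sum_{j\neq i}g_j$ being a constant over $\mathbb{H}_{-i}$. Once this structure is in hand, identifying the relevant coefficient matrix as $\tfrac12(\mathbf{I}_n+\mathbf{1}_n\mathbf{1}_n^{\top})$ and quoting (or re‑deriving via the energy argument) the Gauss--Seidel convergence is routine. I also note that a contraction in the plain Euclidean norm is not available --- the simultaneous (Jacobi) best‑response operator $-\tfrac12(\mathbf{1}_n\mathbf{1}_n^{\top}-\mathbf{I}_n)$ has spectral radius $\tfrac{n-1}{2}\ge1$ once $n\ge3$ --- so the use of the weighted norm $\|\cdot\|_{\star}$ (or of any norm in which $\|\mathcal{M}\|<1$) is essential before invoking Banach's fixed point theorem (Lemma \ref{Banach}).
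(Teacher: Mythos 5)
Your proof is correct, and its first half coincides with the paper's: your Step~1 is exactly Lemma~\ref{global} (the best response over the hyperplane $\mathbb{H}_i$ is affine with linear part $-\tfrac12\sum_{j\neq i}\bm{\nu}_j$, the constant being fixed because $\sum_{j\neq i}g_j$ is constant on $\mathbb{H}_{-i}$), which you derive via Lagrange multipliers rather than the paper's Sherman--Morrison inversion of $\mathbf{C}$, and your Step~2 is the factored form of the paper's explicit affine representation $\bm{F}(\bm{\nu})=\mathbf{L}_n\bm{\nu}+\mathbf{M}_n$ in \eqref{explimap}, since $\mathbf{L}_n=-(\mathbf{I}+\mathbf{L})^{-1}\mathbf{U}$. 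Where you genuinely diverge is the contraction step: the paper asserts without detail that $\rho(\mathbf{L}_n)<1$ and then manufactures a metric through the Jordan-form similarity of Lemma~\ref{Spectral}, whereas you identify $\mathcal{M}$ as the block Gauss--Seidel iteration matrix of the symmetric positive definite matrix $\tfrac12(\mathbf{I}_n+\mathbf{1}\mathbf{1}^{\top})$ and invoke the Ostrowski--Reich energy estimate, obtaining a strict contraction directly in the explicit $\mathbf{A}_0$-energy norm. Your route buys two things: it actually proves the spectral-radius bound that the paper leaves as a claimed calculation, and it yields a concrete contraction norm rather than an abstract $\varepsilon$-dependent similarity; the paper's route is more generic (any proof of $\rho(\mathbf{L}_n)<1$ would do) but less self-contained on that point. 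Two small caveats, neither affecting correctness: the energy identity behind Ostrowski--Reich should be quoted in its block form, which is immediate here because every factor is a Kronecker product with $\mathbf{I}_T$ and the diagonal block is the identity; and your closing remark about the Euclidean norm concerns the Jacobi operator rather than $\mathcal{M}$ itself, so it motivates, but does not establish, the need for a weighted norm.
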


The proof of Lemma \ref{conmap} is postponed to Appendix \ref{appC}.

\emph{Proof of Proposition \ref{saddle}:}
For any user $i\in\mathcal{N}$ and any strategy $\bm{\nu}_i'=(\nu_i'(1),\ldots,\nu_i'(T))^{\top}\ne\bm{\nu}_i$ with $\bm{1}^{\top} \bm{\nu}_i'=g_i$, according to \eqref{quafun} we have
\begin{equation*}\label{strictNE}
     \begin{aligned}
          &u_i^f(\bm{\pi},\bm{\nu}_i',\bm{\nu}_{-i})-u_i^f(\bm{\pi},\bm{\nu}_{i},\bm{\nu}_{-i})\\
          &=\bm{d}_i'^\top \mathbf{C}\bm{d}_i'+\bm{\mu}_i^{\top}\bm{d}_i'+h_i(T)g_i+\frac{g_i^2}{\widetilde{w}(T)}\\
          &\quad-\Big(\bm{\bar{d}}_i^{\top} \mathbf{C}\bm{\bar{d}}_i+\bm{\mu}_i^{\top}\bm{\bar{d}}_i+h_i(T)g_i+\frac{g_i^2}{\widetilde{w}(T)}\Big)\\
          &=(\bm{d}_i'-\bm{\bar{d}}_i)^{\top}\mathbf{C}(\bm{d}_i'-\bm{\bar{d}}_i)+\bm{d}_i'^{\top}\mathbf{C}\bm{\bar{d}}_i+\bm{\bar{d}}_i^{\top}\mathbf{C}\bm{d}_i'\\
          &\quad-2\bm{\bar{d}}_i^{\top}\mathbf{C}\bm{\bar{d}}_i+(\bm{d}_i'-\bm{\bar{d}}_i)^{\top}\bm{\mu}_i\\
          &=(\bm{d}_i'-\bm{\bar{d}}_i)^{\top}\mathbf{C}(\bm{d}_i'-\bm{\bar{d}}_i)+2(\bm{d}_i'-\bm{\bar{d}}_i)^{\top}\mathbf{C}\bm{\bar{d}}_i\\
          &\quad+(\bm{d}_i'-\bm{\bar{d}}_i)^{\top}\bm{\mu}_i\\
          &=(\bm{d}_i'-\bm{\bar{d}}_i)^{\top}\mathbf{C}(\bm{d}_i'-\bm{\bar{d}}_i)+(\bm{d}_i'-\bm{\bar{d}}_i)^{\top}(2\mathbf{C}\bm{\bar{d}}_i+\bm{\mu}_i)\\
          &=(\bm{d}_i'-\bm{\bar{d}}_i)^{\top}\mathbf{C}(\bm{d}_i'-\bm{\bar{d}}_i)>0,
     \end{aligned}
\end{equation*}
where the third equality and the last inequality use the fact that  $\mathbf{C}$ is symmetric positive definite, and the last equality uses \eqref{eq7_1}.
So $\bm{\nu}$ is a strict Nash equilibrium of model \eqref{follower2} when the leader's strategy is $\bm{\pi}$.

It remains to prove the uniqueness of the Nash equilibrium of model \eqref{follower2}. Assume that $\bm{\widetilde{\nu}}\in\mathbb{H}$ is  an arbitrary  Nash equilibria, then by \eqref{NEdef} we have
\begin{equation}\label{NEeq}
     \bm{\widetilde{\nu}}_i=\mathop{\arg\min}_{\bm{\nu}_i\in\mathbb{H}_i}u_i^f(\bm{\pi},\bm{\nu}_i,\bm{\widetilde{\nu}}_{-i}), ~~\forall i\in\mathcal{N}.
\end{equation}
Here we recall that the best response strategy of user $i$ in $\mathbb{H}_i$ is unique. By \eqref{seqbs} and \eqref{NEeq}  we can sequentially get
\begin{equation*}
     \bm{f}_1(\bm{\widetilde{\nu}})=\mathop{\arg\min}_{\bm{\nu}_1\in\mathbb{H}_1}u_1^f\big(\bm{\pi},\bm{\nu}_1,(\bm{\widetilde{\nu}}_{2},\ldots,\bm{\widetilde{\nu}}_{n})^{\top}\big)=\bm{\widetilde{\nu}}_1
\end{equation*}
and
\begin{equation*}
     \begin{aligned}
          &\bm{f}_i(\bm{\widetilde{\nu}})=\\
          &\mathop{\arg\min}_{\bm{\nu}_i\in\mathbb{H}_i}u_i^f\big(\bm{\pi}, \bm{\nu}_{i},(\bm{f}_1(\bm{\widetilde{\nu}}),\ldots,\bm{f}_{i-1}(\bm{\widetilde{\nu}}),\bm{\widetilde{\nu}}_{i+1},\ldots,\bm{\widetilde{\nu}}_{n})^{\top}\big)\\
          &=\mathop{\arg\min}_{\bm{\nu}_i\in\mathbb{H}_i}u_i^f\big(\bm{\pi}, \bm{\nu}_{i},(\bm{\widetilde{\nu}}_1,\ldots,\bm{\widetilde{\nu}}_{i-1},\bm{\widetilde{\nu}}_{i+1},\ldots,\bm{\widetilde{\nu}}_{n})^{\top}\big)\\
          &=\bm{\widetilde{\nu}}_i, \qquad\qquad\qquad\qquad\qquad\qquad\qquad i=2,\ldots,n.
     \end{aligned}
\end{equation*}
So we have $\bm{F}(\bm{\widetilde{\nu}})=\bm{\widetilde{\nu}}$, that is, $\bm{\widetilde{\nu}}$ is a fixed point of $\bm{F}$.

Therefore, all Nash equilibria of model \eqref{follower2} are fixed points of the mapping $\bm{F}$.
Based on Lemmas \ref{Banach}-\ref{conmap}, $\bm{F}$ has a unique fixed point, thus model \eqref{follower2} has a unique Nash equilibrium.
Since $\bm{\nu}\in\mathbb{H}$ is a strict Nash equilibrium for model \eqref{follower2}, $\bm{\nu}$ must be the unique Nash equilibrium of model \eqref{follower2}.
\qed

\vskip 2mm

Set $\bm{d}:=(\bm{d}_1^{\top},\ldots,\bm{d}_n^{\top})^{\top}$ to be the strategy vector of all followers, and $$\bm{g}(\bm{d}):=\left(\frac{\partial u_1^f(\bm{\pi},\bm{\nu}_1,\bm{\nu}_{-1})}{\partial\bm{d}_1^{\top}},\ldots,\frac{\partial u_n^f(\bm{\pi},\bm{\nu}_n,\bm{\nu}_{-n})}{\partial\bm{d}_n^{\top}}\right)^{\top}.$$
Further, we make $\mathbf{J}(\bm{d})$ the Jacobian with respect to $\bm{d}$ of $\bm{g}(\bm{d})$, i.e.,
\begin{equation*}
     \mathbf{J}(\bm{d}):=
     \begin{pmatrix}
          \frac{\partial^2 u_1^f(\bm{\pi},\bm{\nu}_1,\bm{\nu}_{-1})}{\partial\bm{d}_1^2} & \cdots & \frac{\partial^2 u_1^f(\bm{\pi},\bm{\nu}_1,\bm{\nu}_{-1})}{\partial\bm{d}_1\partial\bm{d}_n}\\
          \vdots & \ddots & \vdots\\
          \frac{\partial^2 u_n^f(\bm{\pi},\bm{\nu}_n,\bm{\nu}_{-n})}{\partial\bm{d}_n\partial\bm{d}_1} & \cdots & \frac{\partial^2 u_n^f(\bm{\pi},\bm{\nu}_n,\bm{\nu}_{-n})}{\partial\bm{d}_n^2}\\
     \end{pmatrix}.
\end{equation*}
In the model \eqref{followers}, for each user $i\in\mathcal{N}$, we define the strategy space $\Omega_i:=\mathbb{H}_i\cap [0,\nu_i^{\max}]^T$ as the set of its feasible strategies, which is a bounded closed convex set.
According to Theorem 2 in \cite{rosen1965existence}, we can get that the Nash equilibrium of model \eqref{followers} is unique, which can be formulated as the following lemma:
\begin{lemma}[Theorem 2 in \cite{rosen1965existence}]\label{uniNE}
     For each user $i\in\mathcal{N}$, if $u_i^f(\bm{\pi},\bm{\nu}_i,\bm{\nu}_{-i})$ is a convex function on the bounded closed convex region $\Omega_i$,
    and $\mathbf{J}(\bm{d})$ is symmetric positive definite for all $\bm{d}$,
     then model \eqref{followers} admits a unique Nash equilibrium.
\end{lemma}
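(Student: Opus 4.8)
The statement is a specialization of Rosen's classical existence–uniqueness theorem \cite{rosen1965existence}, so the plan is to check that its hypotheses hold here and to recall the structure of its proof. For existence, each feasible set $\Omega_i=\mathbb{H}_i\cap[0,\nu_i^{\max}]^T$ — already noted to be bounded, closed and convex — is also nonempty, since it contains the uniform profile $\nu_i(t)\equiv g_i/T$ (feasible because $\nu_i^{\max}\ge g_i/T$); each $u_i^f(\bm{\pi},\cdot,\bm{\nu}_{-i})$ is continuous and, by hypothesis, convex on $\Omega_i$, with continuous dependence on $\bm{\nu}_{-i}$. Hence the joint best-response correspondence is nonempty- and convex-valued and upper hemicontinuous on the compact convex set $\prod_i\Omega_i$, and Kakutani's fixed point theorem yields at least one Nash equilibrium of \eqref{followers}.

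For uniqueness I would run Rosen's ``diagonally strictly convex'' argument. Writing $\bm{g}(\bm{d})$ for the stacked own-gradients defined above, the assumption that $\mathbf{J}(\bm{d})$ is symmetric positive definite for every $\bm{d}$ — so that Rosen's weighting vector can be taken to be $\mathbf{1}$ — makes $\bm{g}$ strictly monotone on $\prod_i\Omega_i$: integrating $\mathbf{J}$ along the segment joining two distinct profiles gives $(\bm{d}^1-\bm{d}^0)^{\top}\big(\bm{g}(\bm{d}^1)-\bm{g}(\bm{d}^0)\big)>0$. Now suppose $\bm{d}^0$ and $\bm{d}^1$ are both equilibria of \eqref{followers}. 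Since each $\Omega_i$ is convex and $u_i^f$ is convex in player $i$'s own variable, each equilibrium satisfies the first-order variational inequality $(\bm{d}_i'-\bm{d}_i^k)^{\top}\,\partial_{\bm{d}_i}u_i^f(\bm{d}^k)\ge 0$ for all $\bm{d}_i'\in\Omega_i$, $k\in\{0,1\}$; taking $\bm{d}_i'=\bm{d}_i^{1-k}$, summing over $i$ and over $k$, yields $(\bm{d}^1-\bm{d}^0)^{\top}\big(\bm{g}(\bm{d}^1)-\bm{g}(\bm{d}^0)\big)\le 0$, which contradicts strict monotonicity unless $\bm{d}^0=\bm{d}^1$. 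Thus the Nash equilibrium of \eqref{followers} is unique.

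The delicate point I expect is bookkeeping rather than mathematics: the coordinates $\bm{d}_i=(\nu_i(1),\dots,\nu_i(T-1))^{\top}$ are the reduced variables obtained after eliminating $\nu_i(T)=g_i-\sum_{t<T}\nu_i(t)$ via the equality constraint $\bm{1}^{\top}\bm{\nu}_i=g_i$, so one must make sure the variational inequalities above, together with $\mathbf{J}(\bm{d})$, are all taken consistently in these reduced variables on the correspondingly reduced convex feasible sets. Once that is pinned down there is no further obstacle: convexity of each $u_i^f$ in its own variable is already supplied by Lemma \ref{bianxing}, and the rest is the standard monotone variational-inequality uniqueness proof. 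Checking that $\mathbf{J}(\bm{d})$ is in fact symmetric positive definite in the present game — the one substantive hypothesis — is a direct computation, carried out where this lemma is applied.
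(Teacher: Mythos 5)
Your proposal is correct, but note that the paper does not prove this lemma at all: it is stated as an imported result (Theorem 2 of \cite{rosen1965existence}) and simply cited, with the substantive verification — that each $u_i^f$ is convex in its own variable and that $\mathbf{J}(\bm{d})$ is symmetric positive definite, via the Kronecker-product computation — carried out only where the lemma is applied, in the proof of Proposition \ref{oristruni}. What you have written is essentially a reconstruction of Rosen's own argument: existence from Kakutani's fixed point theorem applied to the best-response correspondence on the compact convex product $\prod_i\Omega_i$, and uniqueness from diagonal strict convexity, i.e.\ strict monotonicity of the stacked pseudo-gradient $\bm{g}$ obtained by integrating the positive definite $\mathbf{J}$ along the segment joining two putative equilibria and contradicting the summed first-order variational inequalities. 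This matches the content the citation is invoking, and your flagged ``bookkeeping'' point is genuine but harmless: the paper sidesteps it by defining $\bm{g}$ and $\mathbf{J}$ directly in the reduced variables $\bm{d}_i\in\mathbb{R}^{T-1}$ (with $\nu_i(T)$ eliminated by the equality constraint), so the variational inequalities should be posed on the projections of the sets $\Omega_i$ onto those coordinates, which remain compact and convex; uniqueness in $\bm{d}$ then gives uniqueness in $\bm{\nu}$. So the only difference from the paper is that you prove the quoted theorem rather than cite it, which is self-contained but adds nothing the citation does not already supply.
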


\emph{Proof of Proposition \ref{oristruni}:}
For any $1\leq i\leq n$, according to Lemma \ref{bianxing}, $u_i^f(\bm{\pi},\bm{\nu}_i,\bm{\nu}_{-i})$ is a strictly convex quadratic function.
By \eqref{eq7} and \eqref{muchange}, we have
\begin{equation*}
          \bm{g}(\bm{d})=
     \begin{pmatrix}
          \mathbf{C}\Big[2\bm{d}_1+\sum_{j\neq 1}\bm{d}_j\Big]-\widetilde{\bm{g}}_1\\
          \vdots\\
          \mathbf{C}\Big[2\bm{d}_n+\sum_{j\neq n}\bm{d}_j\Big]-\widetilde{\bm{g}}_n\\
     \end{pmatrix}
\end{equation*}
and
\begin{equation*}
     \begin{aligned}
          \mathbf{J}(\bm{d})&=
          \begin{pmatrix}
               2\mathbf{C} & \cdots & \mathbf{C}\\
               \vdots & \ddots & \vdots\\
               \mathbf{C} & \cdots & 2\mathbf{C}\\
          \end{pmatrix}=
          \begin{pmatrix}
               2 & \cdots & 1\\
               \vdots & \ddots & \vdots\\
               1 & \cdots & 2\\
          \end{pmatrix}\otimes\mathbf{C},
     \end{aligned}
\end{equation*}
where ``$\otimes$'' denotes the Kronecker product. Since the Kronecker product of two positive definite matrices is still positive definite \cite{laub2004matrix}, we can get that $\mathbf{J}(\bm{d})$ is symmetric positive definite for all $\bm{d}$.
According to Lemma \ref{uniNE}, we can conclude that model \eqref{followers} admits a unique Nash equilibrium denoted as  $\bm{\nu}$.

Now we show that $\bm{\nu}$ is a strict Nash equilibrium of model \eqref{followers}.
For any user $i$ and any strategy $\bm{\nu}_i'=(\nu_i'(1),\ldots,\nu_i'(T))^{\top}\in\Omega_i$ with $\bm{\nu}_i'\ne\bm{\nu}_i$, by \eqref{NEdef} we have $$u_i^f(\bm{\pi},\bm{\nu}_i',\bm{\nu}_{-i})\geq u_i^f(\bm{\pi},\bm{\nu}_{i},\bm{\nu}_{-i}).$$
It is clear that $\frac{\bm{\nu}_i'+\bm{\nu}_i}{2}\in\Omega_i$, so
\begin{equation}\label{strconvex}
     u_i^f\big(\bm{\pi},\frac{\bm{\nu}_i'+\bm{\nu}_i}{2},\bm{\nu}_{-i}\big)\geq u_i^f(\bm{\pi},\bm{\nu}_{i},\bm{\nu}_{-i}).
\end{equation}
If $u_i^f(\bm{\pi},\bm{\nu}_i',\bm{\nu}_{-i})=u_i^f(\bm{\pi},\bm{\nu}_{i},\bm{\nu}_{-i})$, then according to strict convexity of $u_i^f(\bm{\pi},\bm{\nu}_{i},\bm{\nu}_{-i})$, we can get
\begin{equation*}
     \begin{aligned}
               u_i^f\big(\bm{\pi},\frac{\bm{\nu}_i'+\bm{\nu}_i}{2},\bm{\nu}_{-i}\big)&<\frac{u_i^f(\bm{\pi},\bm{\nu}_i',\bm{\nu}_{-i})+u_i^f(\bm{\pi},\bm{\nu}_{i},\bm{\nu}_{-i})}{2}\\
                                                                             &=u_i^f(\bm{\pi},\bm{\nu}_i,\bm{\nu}_{-i}),
     \end{aligned}
\end{equation*}
which contradicts \eqref{strconvex}. So $$u_i^f(\bm{\pi},\bm{\nu}_i',\bm{\nu}_{-i})>u_i^f(\bm{\pi},\bm{\nu}_{i},\bm{\nu}_{-i}),$$ which implies $\bm{\nu}$ is a strict Nash equilibrium of model \eqref{followers}.
\qed

\vskip 2mm

\emph{Proof of Proposition \ref{leaderop}:}
Substituting \eqref{NE2} into the problem \eqref{leader}, we have the controllable energy generation $c^*(t)$ satisfies:
\begin{eqnarray*}\label{opt1}
          &&~~~~c^*(t)-\bar{c}\nonumber\\
          &&=\sum_{i\in\mathcal{N}}\nu_i^*(t)+r(t)-w(t)-\frac{g_{\mathcal{N}}}{T}-\frac{1}{T}\sum_{s=1}^T[r(s)-w(s)]\nonumber\\
          &&=\frac{g_{\mathcal{N}}}{T}+\frac{1}{T}\sum_{s=1}^T[r(s)-w(s)]-\frac{g_{\mathcal{N}}}{T}-\frac{1}{T}\sum_{s=1}^T[r(s)-w(s)]\nonumber\\
          &&=0, ~~~~\forall 1\leq t\leq T,
\end{eqnarray*}
which indicates the leader's cost function $u^l(\bm{\pi}^*,\bm{\nu}^*)=0$.
\qed

\subsection{Proofs of Theorem \ref{thm1}, Corollary \ref{cor1} and Theorem \ref{estcor}}\label{PTheorem1}

\emph{Proof of Theorem \ref{thm1}:}
     i)  Assume the leader's strategy is $\bm{\pi}^*$.  First, by Proposition \ref{saddle} and the discussions from \eqref{cstar} to  \eqref{NE2} we have $\bm{\nu}^*$ is a strict Nash equilibrium of model \eqref{follower2}. Also, by  \eqref{NE2} and \eqref{asum1} we have
     $$0<\nu_i^*(t) \leq g_i \min_{j\in\mathcal{N}}\left\{\frac{\nu_j^{\max}}{g_j}\right\}\leq g_i \frac{\nu_i^{\max}}{g_i}=\nu_i^{\max},$$
     which indicates that  $\nu_i^*(t)$ satisfies the bound constraint condition in model \eqref{followers}.
     Therefore, $\bm{\nu}^*$ is a strict Nash equilibrium of model \eqref{followers}. By Proposition \ref{oristruni},  $\bm{\nu}^*$ is the unique Nash equilibrium of model \eqref{followers}.

     ii) By i) and Proposition \ref{leaderop}, we have $(\bm{\pi}^*,\bm{\nu}^*)$ is a perfect SE of the Stackelberg game \eqref{leader}-\eqref{followers}.
\qed

\vskip 2mm

\emph{Proof of Corollary \ref{cor1}:}
     Under the condition \eqref{newtotal}, the price function \eqref{priceform} becomes
     \begin{equation*}
          \begin{aligned}
               \pi^*(t)&=1+\frac{1}{n}+\frac{\nu_{\mathcal{N}}(t)}{w(t)+\dfrac{g_{\mathcal{N}}}{T}-r(t)+\frac{1}{T}\sum_{s=1}^T[r(s)-w(s)]}\\
               &=1+\frac{1}{n}+\frac{\nu_{\mathcal{N}}(t)}{w(t)-r(t)}, ~~t=1,\ldots,T,
          \end{aligned}
     \end{equation*}
     and the Nash equilibrium \eqref{NE2} is
     \begin{equation*}
          \begin{aligned}
               \nu_i^*(t)&=\frac{g_i}{T}+\frac{g_i}{g_{\mathcal{N}}}[w(t)-r(t)]+\frac{g_i}{g_{\mathcal{N}} T}\sum_{s=1}^T[r(s)-w(s)]\\
               &=\frac{g_i}{g_{\mathcal{N}}}[w(t)-r(t)], ~~i=1,\ldots,n, t=1,\cdots,T.
          \end{aligned}
     \end{equation*}
     Therefore, according to the inequality \eqref{condi2} and Theorem \ref{thm1}, $\bm{\chi}^*$ is a strict and unique Nash equilibrium of model \eqref{followers} when the leader's strategy is $\bm{\pi}_0^*$, and $(\bm{\pi}_0^*,\bm{\chi}^*)$ is a perfect SE of the Stackelberg game \eqref{leader}-\eqref{followers}.
\qed

\vskip 2mm

\emph{Proof of Theorem \ref{estcor}:}
     When \eqref{priceformb} is chosen as the leader's strategy, by \eqref{priceform0} and \eqref{wandr}, we have $\widetilde{w}(t)=w(t)-r(t)+\frac{g_{\mathcal{N}}}{T}+b$ and $\widetilde{r}(t)=\frac{n+1}{n}\widetilde{w}(t)$.
     According to Proposition \ref{saddle}, the strict and unique Nash equilibrium in \eqref{saddpoi} becomes
     \begin{eqnarray}\label{NE3}
               && \nu_i(t)=\frac{\widetilde{w}(t)}{\sum_s\widetilde{w}(s)}g_i+\frac{1}{(n+1)\sum_s\widetilde{w}(s)}\nonumber\\
               &&~~\qquad\qquad\quad\times\sum_{s=1}^T\widetilde{w}(s)\widetilde{w}(t)\left[\frac{\widetilde{r}(s)}{\widetilde{w}(s)}-\frac{\widetilde{r}(t)}{\widetilde{w}(t)}\right]\nonumber\\
               &&=\frac{g_i}{g_{\mathcal{N}}+Tb+\sum_{s=1}^T[w(s)-r(s)]}\\
               &&~~\qquad\qquad\quad\times\big[w(t)-r(t)+\frac{g_{\mathcal{N}}}{T}+b\big]\nonumber\\
               &&=\frac{g_i}{g_{\mathcal{N}}+T\delta}\big[w(t)-r(t)+\frac{g_{\mathcal{N}}}{T}+b\big],\nonumber\\
               &&~~~~~\qquad\qquad\qquad~ i=1,\ldots,n,~ t=1,\ldots,T,\nonumber
     \end{eqnarray}
     where the last equality uses \eqref{preerr}.
     With the condition \eqref{condi3} and the similar process of the proof of Theorem \ref{thm1}, we can conclude that $\bm{\sigma}$ is a strict and unique Nash equilibrium of model \eqref{followers} when the leader's strategy is $\bm{\pi}_0$.



     Further, for any $1\leq t\leq T$,
     substituting \eqref{NE3} into the problem \eqref{leader}, we have the controllable energy generation $c(t)$ satisfies:
     \begin{equation*}
          \begin{aligned}
               &~~~~c(t)-\bar{c}\\
               &=\sum_{i\in\mathcal{N}}\nu_i(t)+r(t)-w(t)-\frac{g_{\mathcal{N}}}{T}-\frac{1}{T}\sum_{s=1}^T[r(s)-w(s)]\\
               &=\frac{g_{\mathcal{N}}}{g_{\mathcal{N}}+T\delta}\bigg(w(t)-r(t)+\frac{g_{\mathcal{N}}}{T}+\delta+\frac{1}{T}\sum_{s=1}^T[r(s)-w(s)]\bigg)\\
               &~~~+r(t)-w(t)-\frac{g_{\mathcal{N}}}{T}-\frac{1}{T}\sum_{s=1}^T[r(s)-w(s)]\\
               &=\bigg(\frac{g_{\mathcal{N}}}{g_{\mathcal{N}}+T\delta}-1\bigg)\frac{1}{T}\sum_{s=1}^T[r(s)-w(s)]+\bigg(\frac{g_{\mathcal{N}}}{g_{\mathcal{N}}+T\delta}-1\bigg)\\
               &~~~\times[w(t)-r(t)]\\
               &=\frac{\delta}{g_{\mathcal{N}}+T\delta}\sum_{s=1}^T[w(s)-r(s)]-\frac{T\delta}{g_{\mathcal{N}}+T\delta}[w(t)-r(t)],
          \end{aligned}
     \end{equation*}
     where the second equality uses \eqref{preerr}.
     From this, the cost function of the leader is
     \begin{equation*}
          \begin{aligned}
               &~~u^l(\bm{\pi}_0,\bm{\sigma})=\frac{1}{T}\sum_{t=1}^T\big[c(t)-\bar{c}\big]^2\\
               &=\frac{1}{T}\sum_{t=1}^T\bigg(\frac{\delta}{g_{\mathcal{N}}+T\delta}\sum_{s=1}^T[w(s)-r(s)]-\frac{T\delta}{g_{\mathcal{N}}+T\delta}\\
               &~~~\times[w(t)-r(t)]\bigg)^2\\
               &=\frac{(T\delta)^2}{(g_{\mathcal{N}}+T\delta)^2}\frac{1}{T}\sum_{t=1}^T\bigg(w(t)-r(t)-\frac{1}{T}\sum_{s=1}^T[w(s)-r(s)]\bigg)^2\\
               &=\left(\frac{T\delta}{g_{\mathcal{N}}+T\delta}\right)^2\mbox{Var}\left(\{w(t)-r(t)\}_{t=1}^T\right).
          \end{aligned}
     \end{equation*}
\qed

\renewcommand{\thesection}{\Roman{section}}
\section{Numerical Solution of SE for Model \eqref{leader}-\eqref{followers} without Condition  \eqref{asum1}\label{sec5}}
\renewcommand{\thesection}{\arabic{section}}

In Subsection \ref{ssmainresult} we  give the analytic expression of SE for the Stackelberg game \eqref{leader}-\eqref{followers} based on the inequality condition \eqref{asum1}.
A natural problem is whether we can still obtain the SE if the condition \eqref{asum1} is not satisfied.
The analytic expression  of SE should be hard  for this case, however we can provide a  numerical solution.


First, let us review the optimization problem \eqref{leader} of the leader side.  In fact,  the leader optimizes its objective by adjusting the electricity demand of followers through price function.
If we omit the price function, the optimization problem \eqref{leader} can be rewritten as
\begin{eqnarray}\label{leader2}
	\begin{aligned}
		 &\min_{\{\nu_{\mathcal{N}}(t)\}_{t=1}^T}\frac{1}{T}\sum_{t=1}^T\big[c(t)-\bar{c}\big]^2,\\
		 &~~\mathrm{s.t.}~ \sum_{t=1}^T \nu_{\mathcal{N}}(t)=g_{\mathcal{N}}, \\
		 &~~~~~~~ \nu_{\mathcal{N}}(t)+r(t)=w(t)+c(t), \\
&~~~~~~~ 0\leq \nu_{\mathcal{N}}(t)\leq \nu_{\mathcal{N}}^{\max},~ t=1,\cdots,T,
	\end{aligned}
\end{eqnarray}
where $\nu_{\mathcal{N}}^{\max}:=\sum_{i\in\mathcal{N}}\nu_i^{\max}$.

The optimization problem \eqref{leader2} is a quadratic program with a strictly convex quadratic function. According to the convex analysis method \cite{borwein2006convex}, the problem \eqref{leader2} has a unique global optimal solution, which we denote as $\bm{\nu}_{\mathcal{N}}^*=(\nu_{\mathcal{N}}^*(1),\nu_{\mathcal{N}}^*(2),\cdots,\nu_{\mathcal{N}}^*(T))^{\top}$, and it can be numerically solved by several methods, such as the active-set method \cite{gould2005numerical}.

{
\begin{table}[ht]
\refstepcounter{table}
\hypertarget{algorithm1}{}
\begin{center}\footnotesize
\setlength{\tabcolsep}{6pt}
\begin{tabular}{p{8.5cm}}
\toprule
\textbf{Algorithm 1:} Best response algorithm for solving the game model \eqref{followers} \\
\midrule
\textbf{Input:}
The number of flexible users $n$, the number of time periods $T$, the value of $g_i$ and $\nu_i^{\max}$ for each user $i\in\mathcal{N}$, the pricing sequence $(\widetilde{r}(t),\widetilde{w}(t))_{1\leq t\leq T}$, the tolerance $\varepsilon_0$.
\\
\textbf{Initialization:}
Set $k=0$. Set the initial strategy for each flexible user $i\in\mathcal{N}$ as the average strategy $\bm{\nu}_i^{(0)}=(\frac{g_i}{T},\ldots,\frac{g_i}{T})^{\top}$.\\
\textbf{While not converged do}\\
\quad\textbf{1.} \textbf{for} $i=1,2,\cdots,n$ \textbf{do}\\
\quad\textbf{2.} \quad Calculate the best response strategy $\bm{\nu}_i^{(k+1)}$ for game model \eqref{followers}\\
\qquad\quad when fix $\bm{\nu}_{-i}=(\bm{\nu}_1^{(k+1)},\cdots,\bm{\nu}_{i-1}^{(k+1)},\bm{\nu}_{i+1}^{(k)},\cdots,\bm{\nu}_n^{(k)})^{\top}$.\\
\quad\textbf{3.} \textbf{end for}\\
\quad\textbf{4.} Calculate $er=\|\bm{\nu}^{(k+1)}-\bm{\nu}^{(k)}\|_1$.\\
\quad\textbf{5.} Update $k=k+1$.\\
\textbf{Until} $er<\varepsilon_0$\\
\textbf{Output:} The approximative Nash equilibrium $\bm{\nu}^{(k)}$.\\
\bottomrule
\end{tabular}
\end{center}
\end{table}
}

{
\begin{table}[ht]
\refstepcounter{table}
\hypertarget{algorithm2}{}
\begin{center}\footnotesize
\setlength{\tabcolsep}{6pt}
\begin{tabular}{p{8.5cm}}
\toprule
\textbf{Algorithm 2:} Search algorithm for the optimal pricing sequence \\
\midrule
\textbf{Input:}
The number of flexible users $n$, the number of time periods $T$, the value of $g_i$ and $\nu_i^{\max}$ for each user $i\in\mathcal{N}$, the regular user demand and new energy supply sequence $(r(t),w(t))_{1\leq t\leq T}$, the tolerance $\varepsilon_0$.\\
\textbf{Initialization:}\\
\quad\textbf{1.} Set $k=0$. Randomly initialize a positive sequence\\
\qquad\,$(\widetilde{r}^{(0)}(t),\widetilde{w}(t))_{1\leq t\leq T}$.\\
\quad\textbf{2.} Solve optimization problem \eqref{leader2} and obtain the unique global optimal\\
\qquad\,solution $\bm{\nu}_{\mathcal{N}}^*$.\\
\quad\textbf{3.} Calculate Nash equilibrium $\bm{\nu}^{(0)}=\textrm{NE}(\widetilde{r}^{(0)}(t),\widetilde{w}(t))$ (Alg.~\hyperlink{algorithm1}{1}) for\\
\qquad\,the game model \eqref{followers}, and denote the total electricity demand of flexible\\
\qquad\,users in each time slot as\\
\qquad\,$\bm{\nu}_{\mathcal{N}}^{(0)}=(\nu_{\mathcal{N}}^{(0)}(1),\nu_{\mathcal{N}}^{(0)}(2),\cdots,\nu_{\mathcal{N}}^{(0)}(T))^{\top}$.\\
\quad\textbf{4.} Calculate $er=\|\bm{\nu}_{\mathcal{N}}^{(0)}-\bm{\nu}_{\mathcal{N}}^*\|_{\infty}$.\\
\textbf{While not converged do}\\
\quad\textbf{5.} \textbf{for} $t=1,2,\cdots,T$ \textbf{do}\\
\quad\textbf{6.} \quad \textbf{if} $\nu_{\mathcal{N}}^{(k)}(t)>\nu_{\mathcal{N}}^*(t)$, \textbf{then let}\\
\quad\textbf{7.} \quad\quad $\widetilde{r}^{(k+1)}(t)=\widetilde{r}^{(k)}(t)+\varepsilon_0$;\\
\quad\textbf{8.} \quad \textbf{if} $\nu_{\mathcal{N}}^{(k)}(t)<\nu_{\mathcal{N}}^*(t)$, \textbf{then let}\\
\quad\textbf{9.} \quad\quad $\widetilde{r}^{(k+1)}(t)=\widetilde{r}^{(k)}(t)-\varepsilon_0$.\\
\quad\textbf{10.} \textbf{end for}\\
\quad\textbf{11.} Calculate Nash equilibrium $\bm{\nu}^{(k+1)}=\textrm{NE}(\widetilde{r}^{(k+1)}(t),\widetilde{w}(t))$\\
\qquad~~(Alg.~\hyperlink{algorithm1}{1}) and total electricity demand $\bm{\nu}_{\mathcal{N}}^{(k+1)}$ for game model \eqref{followers}.\\
\quad\textbf{12.} Calculate $er=\|\bm{\nu}_{\mathcal{N}}^{(k+1)}-\bm{\nu}_{\mathcal{N}}^*\|_{\infty}$.\\
\quad\textbf{13.} Update $k=k+1$.\\
\textbf{Until} $er<\varepsilon_0$\\
\textbf{Output:}
The optimal pricing sequence $\{\widetilde{r}^{(k)}(t)\}_{1\leq t\leq T}$.\\
\bottomrule
\end{tabular}
\end{center}
\end{table}
}

Next we consider the game model  \eqref{followers} of followers. By \eqref{priceform0} and \eqref{wandr} the price formula can be written as
\begin{equation}\label{priceform01}
 \pi(\nu_{\mathcal{N}}(t),\widetilde{r}(t),\widetilde{w}(t))=\frac{\nu_{\mathcal{N}}(t)+\widetilde{r}(t)}{\widetilde{w}(t)},t=1,\cdots,T.
 \end{equation}
 By Proposition \ref{oristruni}, the model \eqref{followers} with the leader's strategy \eqref{priceform01} admits a strict and unique Nash equilibrium.
Since the analytic expression of the Nash equilibrium is unknown, we try to find its numerical solution.  Algorithm \hyperlink{algorithm1}{1} provides a numerical solution
of model  \eqref{followers}: when we input a sequence $(\widetilde{r}(t),\widetilde{w}(t))_{1\leq t\leq T}$ for \eqref{priceform01}, a  Nash equilibrium of the strategy combination of all flexible users
can be approximatively obtained through Algorithm \hyperlink{algorithm1}{1}.

Finally, we need to find an optimal pricing formula $\pi^*(t)$ to obtain the numerical solution of SE for Stackelberg game \eqref{leader}-\eqref{followers}. That is to say, we manage to find the optimal sequence $(\widetilde{r}^*(t),\widetilde{w}^*(t))_{1\leq t\leq T}$ so that the Nash equilibrium $\bm{\nu}^*$ of game model \eqref{followers} with the leader's strategy \eqref{priceform01} satisfies
\begin{equation}\label{SEnum}
     \sum_{i\in\mathcal{N}}\nu_i^*(t)=\nu_{\mathcal{N}}^*(t), ~~\forall 1\leq t\leq T.
\end{equation}
The detailed search algorithm is shown in Algorithm \hyperlink{algorithm2}{2}. In this algorithm, inspired by \eqref{saddpoi}, we can randomly initialize a positive sequence $(\widetilde{r}(t),\widetilde{w}(t))_{1\leq t\leq T}$, and then regulate only the sequence $\{\widetilde{r}(t)\}_{t=1,\ldots,T}$ to adjust the Nash equilibrium of game model \eqref{followers}, until \eqref{SEnum} approximatively holds.

We provide a simulation example as follows. Set the number of flexible electricity users as $n=20$,
the number of scheduling time window $T=24$, with each time slot being $1$ hour. Moreover, the total electricity demands of flexible users are $1.55$, $1.49$, $1.04$, $4.13$, $3.7$, $2.03$, $2.29$, $0.92$, $4.47$, $0.81$, $8.98$, $0.02$, $0.29$, $0.37$, $0.13$, $4.64$, $4.65$, $4.83$, $4.49$, and $0.26$, respectively. For any $1\leq i\leq n$, we set $\nu_i^{\max}=2g_i/T$. The regular user electricity consumption and new energy generation sequence $(r(t),w(t))_{1\leq t\leq T}$ are shown in Fig.~\ref{Fig4} (a). It can be verified that the inequalities \eqref{asum1} do not hold in this situation.
Through Algorithms \hyperlink{algorithm1}{1}-\hyperlink{algorithm2}{2}, we can achieve the numerical solution of SE for model \eqref{leader}-\eqref{followers}, in which the controllable energy generation $\{c(t)\}_{t=1,\ldots,T}$ and the Nash equilibrium $\{\nu_i^*(t)\}_{i=1,\ldots,n;t=1,\ldots,T}$ of game model \eqref{followers} are shown in Fig.~\ref{Fig4} (a). Correspondingly, Fig.~\ref{Fig4} (b) shows the variation curve of the utility company's cost $u^l(\bm{\pi}^{(k)},\bm{\nu}^{(k)})$ with iteration number $k$, from which we can see that the cost function quickly converges to the optimal cost of the utility company, which can be obtained by solving problem \eqref{leader2}.
\begin{figure}[htb]
     \centering
     \subfigure[]{
	  \includegraphics[width=3.2in]{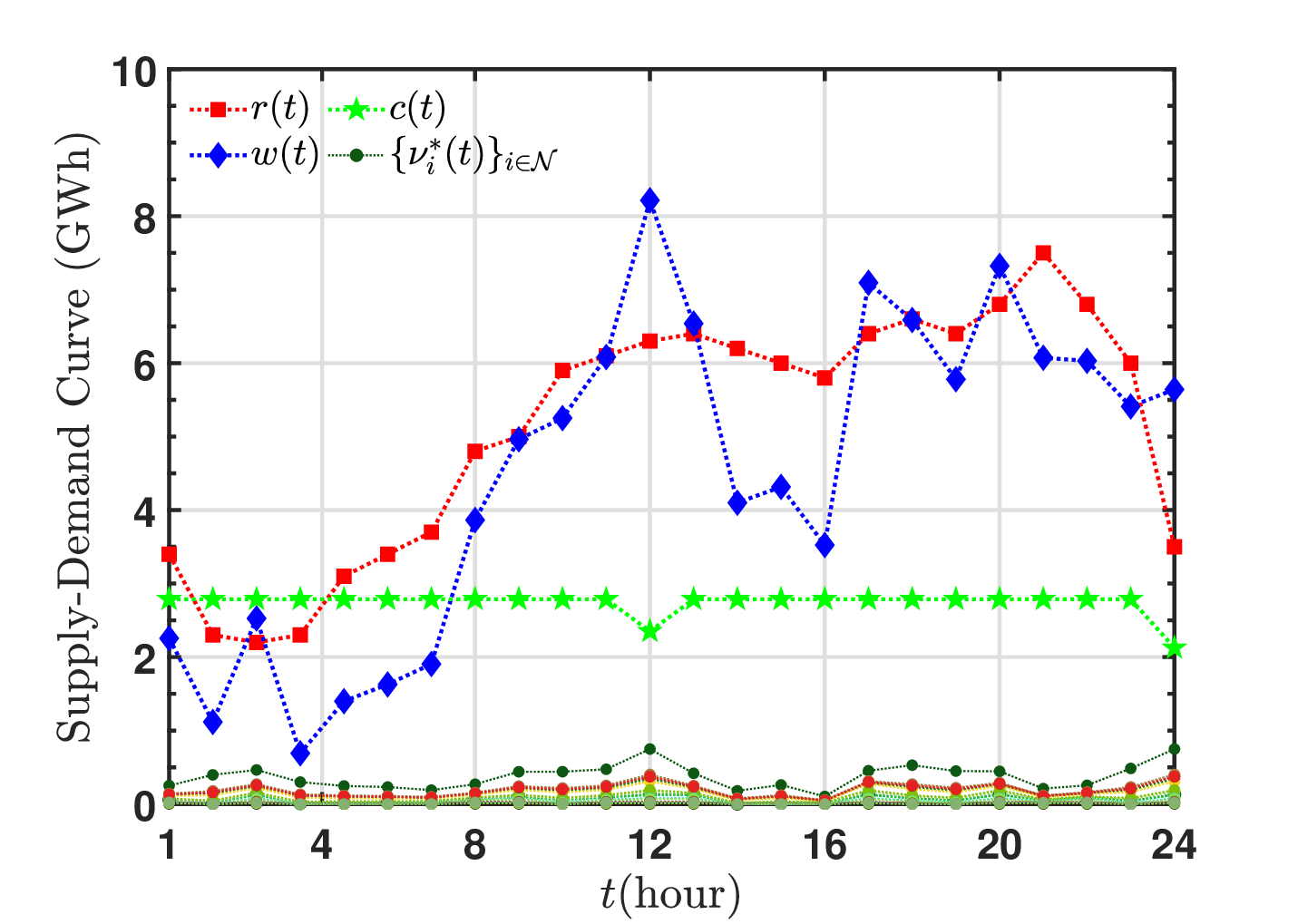}
	}
	\subfigure[]{
	  \includegraphics[width=3.2in]{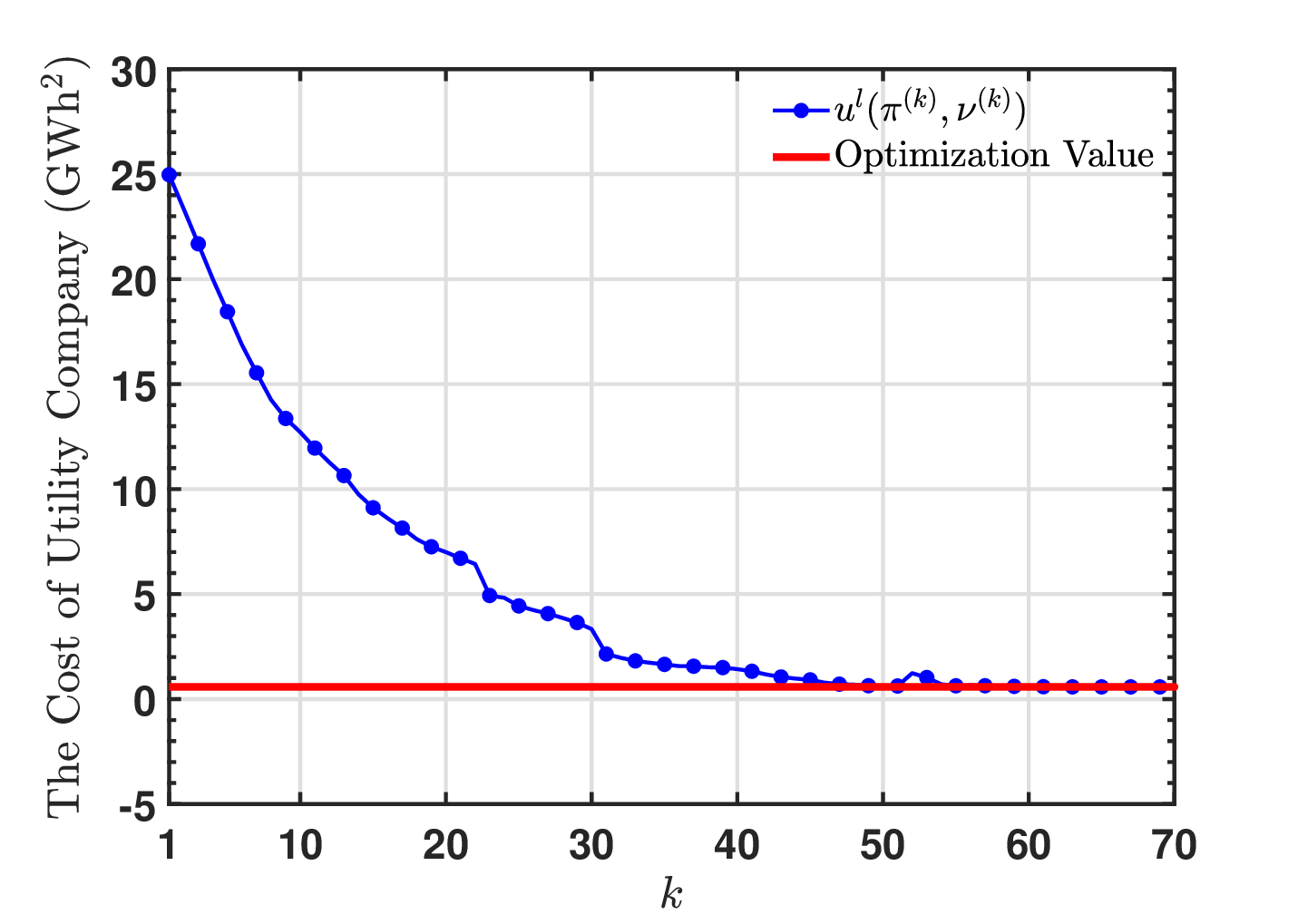}
	}
     \caption{An example of achieving numerical solution for SE through Algorithm 1-2, in which the uncontrollable energy accounts for 62.22\% of energy supply, and flexible users account for 29.41\% of the total energy demand. The curves of $\{w(t),r(t),c(t),\nu_i^*(t)\}$ with respect to time slot $t$ are shown in (a), while the cost function $u^l(\bm{\pi}^{(k)},\bm{\nu}^{(k)})$ with respect to iteration number $k$ is shown in (b).}
     \label{Fig4}
\end{figure}

From above example, for the case when  the condition \eqref{asum1} is not satisfied, the optimal pricing formula for the  utility company  may be achieved by the Algorithms \hyperlink{algorithm1}{1}-\hyperlink{algorithm2}{2}.



\renewcommand{\thesection}{\Roman{section}}
\section{Conclusion\label{sec6}}
\renewcommand{\thesection}{\arabic{section}}

With the rapid development of smart grid, RTP has become one of the most important DR strategies. In this paper, we propose a Stackelberg game model to describe the interaction between utility company and users, and provide analytical expressions and numerical solutions for the optimal pricing formula under different supply and demand relationships. 
Under our optimal pricing scheme, flexible electricity users have an unique and strict Nash equilibrium for electricity demand, in which new energy can be sufficiently utilized and the cost of traditional power generation can be minimized, thereby the social benefit can be maximized.

This work can be extended from different aspects. For example, we can incorporate energy storage companies into the game, consider the equilibrium solving problem when new energy generation is a random variable,
and consider the game behavior between different power suppliers. These questions are meaningful and provide valuable directions for future research.

\appendices
\section{}
\subsection{Proof of Lemma \ref{bianxing}}\label{appA}
With (\ref{wandr}), the price formula \eqref{priceform0} can be rewritten as
\begin{equation*}
     \pi(t)=\frac{\nu_{\mathcal{N}}(t)+\widetilde{r}(t)}{\widetilde{w}(t)},~~\forall 1\leq t\leq T.
\end{equation*}
Substitute this into  \eqref{followers}, the cost function of each user $i$ can be rewritten as
\begin{equation}
     \begin{aligned}
          u_i^f&(\bm{\pi},\bm{\nu}_i,\bm{\nu}_{-i})=\sum_{t=1}^T\frac{\nu_{\mathcal{N}}(t)+\widetilde{r}(t)}{\widetilde{w}(t)}\nu_i(t)\\
          &=\sum_{t=1}^T\frac{1}{\widetilde{w}(t)}\Big[\nu_i(t)\sum_{j\in\mathcal{N}}\nu_j(t) +\widetilde{r}(t)\nu_i(t)\Big]\\
          &=\sum_{t=1}^T\frac{1}{\widetilde{w}(t)}\bigg\{\nu_i^2(t)+\bigg[\sum_{j\neq i}\nu_j(t)+\widetilde{r}(t)\bigg]\nu_i(t)\bigg\}.
     \end{aligned}
     \label{eq3}
\end{equation}
According to the constraint condition $\bm{1}^{\top}\bm{\nu}_i=g_i$, we can obtain $\nu_i(T)=g_i-\sum_{t=1}^{T-1}\nu_i(t)$. From this and  (\ref{symb1}) we have
\begin{equation*}
     \begin{aligned}
       & \frac{1}{\widetilde{w}(T)}\bigg\{\nu_i^2(T)+\bigg[\sum_{j\neq i}\nu_j(T)+\widetilde{r}(T)\bigg]\nu_i(T)\bigg\}\\
       &~~= \frac{\nu_i^2(T)}{\widetilde{w}(T)}+h_i(T)\nu_i(T) \\
       &~~ =\frac{1}{\widetilde{w}(T)}\bigg[g_i-\sum_{t=1}^{T-1}\nu_i(t)\bigg]^2+h_i(T)\bigg[g_i-\sum_{t=1}^{T-1}\nu_i(t)\bigg].
     \end{aligned}
\end{equation*}
Substitute this into \eqref{eq3} we get
\begin{equation*}
     \begin{aligned}
          &u_i^f(\bm{\pi},\bm{\nu}_i,\bm{\nu}_{-i})\\
          &=\sum_{t=1}^{T-1}\left[\frac{1}{\widetilde{w}(t)}\nu_i^2(t)+h_i(t)\nu_i(t)\right]+\frac{1}{\widetilde{w}(T)}\\
          &~~\times\bigg[g_i-\sum_{t=1}^{T-1}\nu_i(t)\bigg]^2+h_i(T)\bigg[g_i-\sum_{t=1}^{T-1}\nu_i(t)\bigg]\\
                    &=\sum_{t=1}^{T-1}\frac{1}{\widetilde{w}(t)}\nu_i^2(t)+\frac{1}{\widetilde{w}(T)}\bigg[\sum_{t=1}^{T-1}\nu_i(t)\bigg]^2\\
          &~~+\sum_{t=1}^{T-1}\bigg[h_i(t)-h_i(T)-\frac{2g_i}{\widetilde{w}(T)}\bigg]\nu_i(t)+h_i(T)g_i+\frac{g_i^2}{\widetilde{w}(T)}\\
          &=\bm{d}_i^\top \mathbf{C}\bm{d}_i+\bm{\mu}_i^\top\bm{d}_i+h_i(T)g_i+\frac{g_i^2}{\widetilde{w}(T)},
     \end{aligned}
\end{equation*}
where the last line uses \eqref{symb2}.

Now we show the convex property of $u_i^f(\bm{\pi},\bm{\nu}_i,\bm{\nu}_{-i})$.   By \eqref{symb1}-\eqref{symb2}, the vector $\bm{\mu}_i$ does not depend on $\bm{d}_i$, which means
     \begin{equation}\label{grad}
     \frac{\partial \bm{\mu}_i^\top\bm{d}_i}{\partial\bm{d}_i}=\bm{\mu}_i, \frac{\partial^2 \bm{\mu}_i^\top\bm{d}_i}{\partial\bm{d}_i^2}=\mathbf{0},~~~~\forall i\in\mathcal{N}.
     \end{equation}
     Based on \eqref{quafun} and \eqref{grad}, we can get
     \begin{equation}\label{eq7}
          \left\{
               \begin{aligned}
                    &\frac{\partial u_i^f(\bm{\pi},\bm{\nu}_i,\bm{\nu}_{-i})}{\partial\bm{d}_i}=2\mathbf{C}\bm{d}_i+\bm{\mu}_i,\\
                    &\frac{\partial^2 u_i^f(\bm{\pi},\bm{\nu}_i,\bm{\nu}_{-i})}{\partial\bm{d}_i^2}=2\mathbf{C},
               \end{aligned}~~~~\forall i\in\mathcal{N}.
          \right.
     \end{equation}
    Also, by \eqref{symb2} and the condition that $\widetilde{w}(t)>0$ for any $t=1,\ldots,T$,  we have
    \begin{equation*}\label{Csympo}
         \bm{x}^{\top}\mathbf{C}\bm{x}=\sum_{t=1}^{T-1}\frac{1}{\widetilde{w}(t)}x_t^2+\frac{1}{\widetilde{w}(T)}\bigg[\sum_{t=1}^{T-1}x_t\bigg]^2>0
    \end{equation*}
for any $\bm{x}=(x_1,x_2,\ldots,x_{T-1})^{\top}\in\mathbb{R}^{T-1}$ with $\bm{x}\ne\bm{0}$, which means $\mathbf{C}$ is symmetric positive definite. From this and \eqref{eq7},
the objective function $u_i^f(\bm{\pi},\bm{\nu}_i,\bm{\nu}_{-i})$ is a strictly convex quadratic function.

\subsection{Proof of Lemma \ref{les}}\label{appB}

For each follower $i$, by \eqref{symb1} and $\bm{1}^{\top}\bm{\nu}_j=g_j, j=1,\ldots,n$, we have
\begin{equation}\label{hiT}
     \begin{aligned}
          h_i(T)&=\frac{1}{\widetilde{w}(T)}\big[\sum_{j\neq i}\nu_j(T)+\widetilde{r}(T)\big]\\
                &=\frac{1}{\widetilde{w}(T)}\sum_{j\neq i}\big[g_j-\sum_{t=1}^{T-1}\nu_j(t)\big]+\frac{\widetilde{r}(T)}{\widetilde{w}(T)}\\
                &=\frac{g_{\mathcal{N}}-g_i+\widetilde{r}(T)}{\widetilde{w}(T)}-\frac{1}{\widetilde{w}(T)}\big[\sum_{j\neq i}\sum_{t=1}^{T-1}\nu_j(t)\big].
     \end{aligned}
\end{equation}
Combining \eqref{symb2} and \eqref{hiT}, $\bm{\mu}_i$ can be written as
\begin{equation}\label{muchange}
     \begin{aligned}
          \bm{\mu}_i&=
          \begin{pmatrix}
               h_i(1)\\
               \vdots\\
               h_i(T-1)\\
          \end{pmatrix}-\big[h_i(T)+\frac{2g_i}{\widetilde{w}(T)}\big]\bm{1}\\
                    &=
                    \begin{pmatrix}
                         \frac{1}{\widetilde{w}(1)} & \cdots & 0\\
                              \vdots & \ddots & \vdots\\
                              0 & \cdots & \frac{1}{\widetilde{w}(T-1)}\\
                    \end{pmatrix}
                    \begin{pmatrix}
                         \sum_{j\neq i}\nu_j(1)\\
                         \vdots\\
                         \sum_{j\neq i}\nu_j(T-1)\\
                    \end{pmatrix}\\
                    &\quad+
                    \begin{pmatrix}
                         \frac{\widetilde{r}(1)}{\widetilde{w}(1)}\\
                         \vdots\\
                         \frac{\widetilde{r}(T-1)}{\widetilde{w}(T-1)}\\
                    \end{pmatrix}+\frac{1}{\widetilde{w}(T)}\big[\sum_{j\neq i}\sum_{t=1}^{T-1}\nu_j(t)\big]\bm{1}\\
                    &\quad-\frac{g_{\mathcal{N}}+g_i+\widetilde{r}(T)}{\widetilde{w}(T)}\bm{1}\\
                    &=\left\{
                    \begin{pmatrix}
                         \frac{1}{\widetilde{w}(1)} & \cdots & 0\\
                              \vdots & \ddots & \vdots\\
                              0 & \cdots & \frac{1}{\widetilde{w}(T-1)}\\
                    \end{pmatrix}+\frac{1}{\widetilde{w}(T)}\mathbf{1}\mathbf{1}^{\top}\right\}\\
                    &\quad\times
                    \begin{pmatrix}
                         \sum_{j\neq i}\nu_j(1)\\
                         \vdots\\
                         \sum_{j\neq i}\nu_j(T-1)\\
                    \end{pmatrix}+
                    \begin{pmatrix}
                         \frac{\widetilde{r}(1)}{\widetilde{w}(1)}\\
                         \vdots\\
                         \frac{\widetilde{r}(T-1)}{\widetilde{w}(T-1)}\\
                    \end{pmatrix}\\
                    &\quad-\frac{g_{\mathcal{N}}+g_i+\widetilde{r}(T)}{\widetilde{w}(T)}\bm{1}\\
                    &=\mathbf{C}\sum_{j\neq i}\bm{d}_j-\widetilde{\bm{g}}_i,
     \end{aligned}
\end{equation}
where $\widetilde{\bm{g}}_i=\frac{g_{\mathcal{N}}+g_i+\widetilde{r}(T)}{\widetilde{w}(T)}\bm{1}-(\frac{\widetilde{r}(1)}{\widetilde{w}(1)},\ldots,\frac{\widetilde{r}(T-1)}{\widetilde{w}(T-1)})^{\top}$, and
the last equality uses the fact
\begin{multline*}
\big(\sum_{j\neq i}\nu_j(1),\ldots,\sum_{j\neq i}\nu_j(T-1)\big)^{\top}\\
=\sum_{j\neq i}\big(\nu_j(1),\ldots,\nu_j(T-1)\big)^{\top}=\sum_{j\neq i}\bm{d}_j.
\end{multline*}
Using \eqref{muchange}, Eq.~\eqref{eq7_1} can be rewritten as
\begin{equation}\label{eq7_3a}
     \mathbf{C}\Big[2\bm{\bar{d}}_i+\sum_{j\neq i}\bm{d}_j\Big]=\widetilde{\bm{g}}_i.
\end{equation}

By uniting all followers through \eqref{eq7_3a}, we can obtain the following linear equation system
\begin{eqnarray}\label{eq7_2a}
\begin{aligned}
\mathbf{C}\bm{\bar{d}}_1&+\mathbf{C}\left(\bm{\bar{d}}_1+\cdots+\bm{\bar{d}}_n\right)= \widetilde{\bm{g}}_1\\
&~\vdots\\
     \mathbf{C}\bm{\bar{d}}_n&+\mathbf{C}\left(\bm{\bar{d}}_1+\cdots+\bm{\bar{d}}_n\right)= \widetilde{\bm{g}}_n
\end{aligned}.
\end{eqnarray}
Sum up all rows of \eqref{eq7_2a} yields
$$\mathbf{C}\left(\bm{\bar{d}}_1+\cdots+\bm{\bar{d}}_n\right)=\frac{1}{n+1}\left(\widetilde{\bm{g}}_1+\cdots+\widetilde{\bm{g}}_n\right).$$
Substituting this into \eqref{eq7_2a} we have
\begin{equation}\label{eq7_2b}
     \bm{\bar{d}}_i=\mathbf{C}^{-1}\left[\widetilde{\bm{g}}_i-\frac{1}{n+1}\left(\widetilde{\bm{g}}_1+\cdots+\widetilde{\bm{g}}_n\right)\right],~~\forall i\in\mathcal{N}.
\end{equation}
According to the Sherman-Morrison formula\footnotemark, we have
\begin{eqnarray}\label{Cinva}
&&\mathbf{C}^{-1}=\mbox{diag}(\widetilde{w}(1),\ldots,\widetilde{w}(T-1))-\frac{1}{1+\frac{\sum_{t=1}^{T-1}\widetilde{w}(t)}{\widetilde{w}(T)}}\nonumber\\
&&~~\times\frac{1}{\widetilde{w}(T)}(\widetilde{w}(1),\ldots,\widetilde{w}(T-1))^{\top}(\widetilde{w}(1),\ldots,\widetilde{w}(T-1))\nonumber\\
&&=\mbox{diag}(\widetilde{w}(1),\ldots,\widetilde{w}(T-1))-\frac{1}{\sum_{t=1}^{T}\widetilde{w}(t)}\\
&&~~\times(\widetilde{w}(1),\ldots,\widetilde{w}(T-1))^{\top}(\widetilde{w}(1),\ldots,\widetilde{w}(T-1)).\nonumber
\end{eqnarray}
\footnotetext{Sherman-Morrison formula \cite{hager1989updating}: Given $\mathbf{A}\in\mathbb{R}^{n\times n}$, $\bm{u}, \bm{v}\in\mathbb{R}^n$, such that $\det(\mathbf{A})\ne 0$, $1+\bm{v}^{\top}\mathbf{A}^{-1}\bm{u}\ne 0$, then $\mathbf{A}+\bm{u}\bm{v}^{\top}$ is invertible, and we have $$(\mathbf{A}+\bm{u}\bm{v}^{\top})^{-1}=\mathbf{A}^{-1}-\frac{\mathbf{A}^{-1}\bm{u}\bm{v}^{\top}\mathbf{A}^{-1}}{1+\bm{v}^{\top}\mathbf{A}^{-1}\bm{u}}.$$}
On the other hand, we have
\begin{equation}\label{bmgi}
     \begin{aligned}
          &~~~\widetilde{\bm{g}}_i-\frac{1}{n+1}\sum_{i=1}^n\widetilde{\bm{g}}_i\\
          &=\frac{g_{\mathcal{N}}+g_i+\widetilde{r}(T)}{\widetilde{w}(T)}\bm{1}-
          \begin{pmatrix}
               \frac{\widetilde{r}(1)}{\widetilde{w}(1)}\\
               \vdots\\
               \frac{\widetilde{r}(T-1)}{\widetilde{w}(T-1)}\\
          \end{pmatrix}\\
          &~~~-\frac{(n+1)g_{\mathcal{N}}+n\widetilde{r}(T)}{(n+1)\widetilde{w}(T)}\bm{1}+\frac{n}{n+1}
          \begin{pmatrix}
               \frac{\widetilde{r}(1)}{\widetilde{w}(1)}\\
               \vdots\\
               \frac{\widetilde{r}(T-1)}{\widetilde{w}(T-1)}\\
          \end{pmatrix}\\
          &=\frac{(n+1)g_i+\widetilde{r}(T)}{(n+1)\widetilde{w}(T)}\bm{1}-\frac{1}{n+1}
          \begin{pmatrix}
               \frac{\widetilde{r}(1)}{\widetilde{w}(1)}\\
               \vdots\\
               \frac{\widetilde{r}(T-1)}{\widetilde{w}(T-1)}\\
          \end{pmatrix},~~\forall i\in\mathcal{N}.
     \end{aligned}
\end{equation}
Substituting \eqref{Cinva} and \eqref{bmgi} into \eqref{eq7_2b}, we can get
\begin{equation}\label{dibar}
     \begin{aligned}
          \bm{\bar{d}}_i&=\frac{(n+1)g_i+\widetilde{r}(T)}{(n+1)\widetilde{w}(T)}
          \begin{pmatrix}
               \widetilde{w}(1)\\
               \vdots\\
               \widetilde{w}(T-1)\\
          \end{pmatrix}-\frac{1}{n+1}\\
          &\times
          \begin{pmatrix}
               \widetilde{r}(1)\\
               \vdots\\
               \widetilde{r}(T-1)\\
          \end{pmatrix}-\frac{\sum_{t=1}^{T-1}\widetilde{w}(t)}{\sum_{t=1}^{T}\widetilde{w}(t)}\frac{(n+1)g_i+\widetilde{r}(T)}{(n+1)\widetilde{w}(T)}\\
          &\times
          \begin{pmatrix}
          \widetilde{w}(1)\\
          \vdots\\
          \widetilde{w}(T-1)\\
          \end{pmatrix}
          +\frac{\sum_{t=1}^{T-1}\widetilde{r}(t)}{\sum_{t=1}^{T}\widetilde{w}(t)}\frac{1}{n+1}
          \begin{pmatrix}
               \widetilde{w}(1)\\
               \vdots\\
               \widetilde{w}(T-1)\\
          \end{pmatrix}\\
          &=\left[\frac{\widetilde{w}(T)}{\sum_{t=1}^{T}\widetilde{w}(t)}\frac{(n+1)g_i+\widetilde{r}(T)}{(n+1)\widetilde{w}(T)}+\frac{\sum_{t=1}^{T-1}\widetilde{r}(t)}{(n+1)\sum_{t=1}^{T}\widetilde{w}(t)}\right]\\
          &\times
          \begin{pmatrix}
               \widetilde{w}(1)\\
               \vdots\\
               \widetilde{w}(T-1)\\
          \end{pmatrix}-\frac{1}{n+1}
          \begin{pmatrix}
               \widetilde{r}(1)\\
               \vdots\\
               \widetilde{r}(T-1)\\
          \end{pmatrix}\\
          &=\left[\frac{g_i}{\sum_{t=1}^{T}\widetilde{w}(t)}+\frac{\sum_{t=1}^{T}\widetilde{r}(t)}{(n+1)\sum_{t=1}^{T}\widetilde{w}(t)}\right]
          \begin{pmatrix}
               \widetilde{w}(1)\\
               \vdots\\
               \widetilde{w}(T-1)\\
          \end{pmatrix}\\
          &-\frac{\sum_{t=1}^{T}\widetilde{w}(t)}{(n+1)\sum_{t=1}^{T}\widetilde{w}(t)}
          \begin{pmatrix}
               \widetilde{r}(1)\\
               \vdots\\
               \widetilde{r}(T-1)\\
          \end{pmatrix}
     \end{aligned}
\end{equation}
\begin{equation*}
     \begin{aligned}
          &=\frac{g_i}{\sum_{t=1}^{T}\widetilde{w}(t)}
          \begin{pmatrix}
               \widetilde{w}(1)\\
               \vdots\\
               \widetilde{w}(T-1)\\
          \end{pmatrix}+\frac{1}{(n+1)\sum_{t=1}^{T}\widetilde{w}(t)}\\
          &\times
          \begin{pmatrix}
               \sum_{t=1}^{T}[\widetilde{r}(t)\widetilde{w}(1)-\widetilde{w}(t)\widetilde{r}(1)]\\
               \vdots\\
               \sum_{t=1}^{T}[\widetilde{r}(t)\widetilde{w}(T-1)-\widetilde{w}(t)\widetilde{r}(T-1)]\\
          \end{pmatrix}\\
          &=(\nu_i(1),\nu_i(2),\ldots,\nu_i(T-1))^{\top},~~\forall i\in\mathcal{N}.
     \end{aligned}
\end{equation*}
Besides, we have
\begin{multline*}
          \sum_{t=1}^T\sum_{s=1}^T\widetilde{w}(s)\widetilde{w}(t)\left[\frac{\widetilde{r}(s)}{\widetilde{w}(s)}-\frac{\widetilde{r}(t)}{\widetilde{w}(t)}\right]\\
          =\sum_{t=1}^T\widetilde{w}(t)\sum_{s=1}^T\widetilde{r}(s)-\sum_{t=1}^T\widetilde{r}(t)\sum_{s=1}^T\widetilde{w}(s)=0,
\end{multline*}
so
\begin{multline}\label{nuiT}
     \sum_{t=1}^{T-1}\sum_{s=1}^T\widetilde{w}(s)\widetilde{w}(t)\left[\frac{\widetilde{r}(s)}{\widetilde{w}(s)}-\frac{\widetilde{r}(t)}{\widetilde{w}(t)}\right]\\
     =-\sum_{s=1}^T\widetilde{w}(s)\widetilde{w}(T)\left[\frac{\widetilde{r}(s)}{\widetilde{w}(s)}-\frac{\widetilde{r}(T)}{\widetilde{w}(T)}\right].
\end{multline}
According to \eqref{dibar} and the constraint condition $ \sum_{t=1}^T\nu_i(t)=g_i$, we can obtain
\begin{equation*}
     \begin{aligned}
          \nu_i(T)&=g_i-\sum_{t=1}^{T-1}\nu_i(t)\\
                  &=g_i-\frac{\sum_{t=1}^{T-1}\widetilde{w}(t)}{\sum_s\widetilde{w}(s)}g_i-\frac{1}{(n+1)\sum_s\widetilde{w}(s)}\\
                  &\qquad\qquad\times\sum_{t=1}^{T-1}\sum_{s=1}^T\widetilde{w}(s)\widetilde{w}(t)\left[\frac{\widetilde{r}(s)}{\widetilde{w}(s)}-\frac{\widetilde{r}(t)}{\widetilde{w}(t)}\right]\\
                  &=\frac{\widetilde{w}(T)}{\sum_s\widetilde{w}(s)}g_i+\frac{1}{(n+1)\sum_s\widetilde{w}(s)}\\
                  &\qquad\qquad\times\sum_{s=1}^T\widetilde{w}(s)\widetilde{w}(T)\left[\frac{\widetilde{r}(s)}{\widetilde{w}(s)}-\frac{\widetilde{r}(T)}{\widetilde{w}(T)}\right], ~~\forall i\in\mathcal{N},
     \end{aligned}
\end{equation*}
where the last equality uses \eqref{nuiT}.

\subsection{Proof of Lemma \ref{conmap}}\label{appC}

Before the proof of Lemma \ref{conmap}, we need to introduce some lemmas as follows.
\begin{lemma}\label{global}
     For each user $i\in\mathcal{N}$ and any strategy combination $\bm{\nu}_{-i}\in\mathbb{H}_{-i}$, the best response strategy $\bm{\bar{\nu}}_{i}=(\bar{\nu}_i(1),\ldots,\bar{\nu}_i(T))^{\top}:=\mathop{\arg\min}_{\bm{\nu}_i\in\mathbb{H}_i}u_i^f(\bm{\pi},\bm{\nu}_i,\bm{\nu}_{-i})$ is
     \begin{multline}\label{expli}
          \bm{\bar{\nu}}_{i}=-\frac{1}{2}\sum_{j\neq i}\bm{\nu}_{j}+\frac{g_{\mathcal{N}}+g_i+\sum_s\widetilde{r}(s)}{2\sum_s\widetilde{w}(s)}\\
          \times\begin{pmatrix}
               \widetilde{w}(1)\\
               \vdots\\
               \widetilde{w}(T)\\
          \end{pmatrix}-\frac{1}{2}
          \begin{pmatrix}
               \widetilde{r}(1)\\
               \vdots\\
               \widetilde{r}(T)\\
          \end{pmatrix}.
     \end{multline}
\end{lemma}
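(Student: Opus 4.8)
The plan is to compute $\bm{\bar{\nu}}_{i}$ directly by minimizing $u_i^f(\bm{\pi},\bm{\nu}_i,\bm{\nu}_{-i})$ over the affine hyperplane $\mathbb{H}_i=\{\bm{\nu}_i\in\mathbb{R}^T:\bm{1}^{\top}\bm{\nu}_i=g_i\}$ via a single Lagrange multiplier, starting from the closed form of the cost function recorded in \eqref{eq3}. By Lemma~\ref{bianxing} (equivalently, since the Hessian of $u_i^f$ in $\bm{\nu}_i$ is the positive diagonal matrix with entries $2/\widetilde{w}(t)>0$) the map $\bm{\nu}_i\mapsto u_i^f(\bm{\pi},\bm{\nu}_i,\bm{\nu}_{-i})$ is a strictly convex quadratic, so its minimizer over the affine set $\mathbb{H}_i$ exists, is unique, and coincides with the unique point satisfying the first-order stationarity condition of the Lagrangian. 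Hence it suffices to solve that stationarity system and check that the answer equals \eqref{expli}.

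Concretely, I introduce $\lambda\in\mathbb{R}$ for the constraint $\bm{1}^{\top}\bm{\nu}_i=g_i$ and consider
\[
L(\bm{\nu}_i,\lambda)=\sum_{t=1}^T\frac{1}{\widetilde{w}(t)}\Big\{\nu_i^2(t)+\Big[\sum_{j\neq i}\nu_j(t)+\widetilde{r}(t)\Big]\nu_i(t)\Big\}-\lambda\Big(\sum_{t=1}^T\nu_i(t)-g_i\Big).
\]
The condition $\partial L/\partial\nu_i(t)=0$ reads $\frac{1}{\widetilde{w}(t)}\big[2\nu_i(t)+\sum_{j\neq i}\nu_j(t)+\widetilde{r}(t)\big]=\lambda$, i.e.
\[
\nu_i(t)=\tfrac12\Big[\lambda\,\widetilde{w}(t)-\sum_{j\neq i}\nu_j(t)-\widetilde{r}(t)\Big],\qquad t=1,\ldots,T.
\]
Summing over $t$ and using $\sum_{t=1}^T\nu_i(t)=g_i$ together with $\sum_{t=1}^T\sum_{j\neq i}\nu_j(t)=\sum_{j\neq i}g_j=g_{\mathcal{N}}-g_i$ (each opponent strategy $\bm{\nu}_j\in\mathbb{H}_j$) pins down the multiplier,
\[
\lambda=\frac{g_{\mathcal{N}}+g_i+\sum_{s=1}^T\widetilde{r}(s)}{\sum_{s=1}^T\widetilde{w}(s)}.
\]
Substituting this back into the stationarity expression for $\nu_i(t)$ and stacking the coordinates into a vector yields exactly \eqref{expli}, which completes the proof.

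I do not expect a genuine obstacle here: this is routine constrained quadratic minimization. The only points requiring care are (i) the justification that the stationary point is in fact the global minimizer, which is immediate from strict convexity, and (ii) the bookkeeping identity $\sum_{t=1}^T\sum_{j\neq i}\nu_j(t)=g_{\mathcal{N}}-g_i$, which relies on every opponent strategy lying on its own hyperplane $\mathbb{H}_j$. An alternative route would start from the reduced first-order system \eqref{eq7_3a} for $\bm{\bar{d}}_i$ and apply the Sherman--Morrison expression \eqref{Cinva} for $\mathbf{C}^{-1}$, then recover $\bar{\nu}_i(T)$ from $\bm{1}^{\top}\bm{\bar{\nu}}_i=g_i$; but the Lagrangian route above treats all $T$ coordinates symmetrically and is shorter.
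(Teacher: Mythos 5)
Your proof is correct, and it takes a genuinely different route from the paper. The paper proves Lemma~\ref{global} in the reduced coordinates $\bm{d}_i$ (eliminating $\nu_i(T)$ through the constraint): it starts from the stationarity condition \eqref{eq7_1}, i.e.\ $\bm{\bar d}_i=-\tfrac12\mathbf{C}^{-1}\bm{\mu}_i$, substitutes the expression \eqref{muchange} for $\bm{\mu}_i$ and the Sherman--Morrison formula \eqref{Cinva} for $\mathbf{C}^{-1}$, carries out a fairly long vector computation, and then recovers the last component $\bar\nu_i(T)$ separately from $\bm{1}^{\top}\bm{\bar\nu}_i=g_i$ with the help of the identity \eqref{nuiT}. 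You instead keep all $T$ coordinates and impose the constraint with a single Lagrange multiplier; since in the unreduced parametrization the Hessian is the diagonal matrix $\mathrm{diag}(2/\widetilde w(t))$ (positive definite under the standing assumption $\widetilde w(t)>0$), the stationarity system decouples across $t$, and summing it against the constraint, together with $\sum_t\sum_{j\neq i}\nu_j(t)=g_{\mathcal N}-g_i$ from $\bm{\nu}_{-i}\in\mathbb{H}_{-i}$, pins down $\lambda$ and yields \eqref{expli} directly. Your multiplier value and the back-substitution are correct, and strict convexity of the quadratic over the affine set $\mathbb{H}_i$ does justify that the unique stationary point is the global minimizer, so there is no gap. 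What each approach buys: yours is shorter, treats the $T$ slots symmetrically, and avoids matrix inversion altogether; the paper's version reuses the machinery ($\mathbf{C}$, $\bm{\mu}_i$, Sherman--Morrison) that it has already set up for Lemma~\ref{les} and the contraction-mapping analysis, so it stays consistent with the reduced-coordinate framework used throughout the appendix.
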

\begin{proof}
     For each user $i$, by \eqref{eq7_1} the unique global minimum point $\bm{\bar{d}}_{i}$ satisfies
     \begin{equation}\label{bardi}
          \bm{\bar{d}}_i=-\frac{1}{2}\mathbf{C}^{-1}\bm{\mu}_i.
     \end{equation}
     Substituting \eqref{muchange} and \eqref{Cinva} into \eqref{bardi}, we have
     \begin{equation*}
          \begin{aligned}
               &~~~\bm{\bar{d}}_i=-\frac{1}{2}\sum_{j\neq i}\bm{d}_{j}+\frac{1}{2}\mathbf{C}^{-1}\widetilde{\bm{g}}_i\\
               &=-\frac{1}{2}\sum_{j\neq i}\bm{d}_{j}+\frac{g_{\mathcal{N}}+g_i+\widetilde{r}(T)}{2\widetilde{w}(T)}
               \begin{pmatrix}
                    \widetilde{w}(1)\\
                    \vdots\\
                    \widetilde{w}(T-1)\\
               \end{pmatrix}\\
               &~~-\frac{1}{2}
               \begin{pmatrix}
                    \widetilde{r}(1)\\
                    \vdots\\
                    \widetilde{r}(T-1)\\
               \end{pmatrix}
               -\frac{g_{\mathcal{N}}+g_i+\widetilde{r}(T)}{2\widetilde{w}(T)}\frac{\sum_{s=1}^{T-1}\widetilde{w}(s)}{\sum_s\widetilde{w}(s)}\\
               &~~\times
               \begin{pmatrix}
               \widetilde{w}(1)\\
               \vdots\\
               \widetilde{w}(T-1)\\
               \end{pmatrix}+\frac{\sum_{s=1}^{T-1}\widetilde{r}(s)}{2\sum_s\widetilde{w}(s)}
               \begin{pmatrix}
                    \widetilde{w}(1)\\
                    \vdots\\
                    \widetilde{w}(T-1)\\
               \end{pmatrix}\\
               &=-\frac{1}{2}\sum_{j\neq i}\bm{d}_{j}+\frac{g_{\mathcal{N}}+g_i+\sum_s\widetilde{r}(s)}{2\sum_s\widetilde{w}(s)}\\
               &~~\times\begin{pmatrix}
                    \widetilde{w}(1)\\
                    \vdots\\
                    \widetilde{w}(T-1)\\
               \end{pmatrix}-\frac{1}{2}
               \begin{pmatrix}
                    \widetilde{r}(1)\\
                    \vdots\\
                    \widetilde{r}(T-1)\\
               \end{pmatrix}.
          \end{aligned}
     \end{equation*}
     Besides, due to $\bm{1}^{\top}\bm{\bar{\nu}}_{i}=g_i$, we have
     \begin{equation*}
          \begin{aligned}
               &~~~\bar{\nu}_i(T)=g_i-\bm{1}^{\top}\bm{\bar{d}}_i\\
               &=g_i+\frac{1}{2}\sum_{j\neq i}\sum_{t=1}^{T-1}\nu_j(t)-\frac{(g_{\mathcal{N}}+g_i)\sum_{s=1}^{T-1}\widetilde{w}(s)}{2\sum_s\widetilde{w}(s)}\\
               &~~-\left[\frac{\sum_s\widetilde{r}(s)\sum_{s=1}^{T-1}\widetilde{w}(s)}{2\sum_s\widetilde{w}(s)}-\frac{\sum_s\widetilde{w}(s)\sum_{s=1}^{T-1}\widetilde{r}(s)}{2\sum_s\widetilde{w}(s)}\right]\\
               &=g_i+\frac{1}{2}\sum_{j\neq i}[g_j-\nu_j(T)]-\frac{(g_{\mathcal{N}}+g_i)\sum_{s=1}^{T-1}\widetilde{w}(s)}{2\sum_s\widetilde{w}(s)}\\
               &~~+\frac{\sum_s\widetilde{r}(s)\widetilde{w}(T)-\sum_s\widetilde{w}(s)\widetilde{r}(T)}{2\sum_s\widetilde{w}(s)}\\
               &=-\frac{1}{2}\sum_{j\neq i}\nu_{j}(T)+g_i+\frac{1}{2}(g_{\mathcal{N}}-g_i)\\
               &~~-\frac{(g_{\mathcal{N}}+g_i)\sum_{s=1}^{T-1}\widetilde{w}(s)}{2\sum_s\widetilde{w}(s)}+\frac{\sum_s\widetilde{r}(s)\widetilde{w}(T)}{2\sum_s\widetilde{w}(s)}-\frac{1}{2}\widetilde{r}(T)\\
               &=-\frac{1}{2}\sum_{j\neq i}\nu_{j}(T)
               +\frac{g_{\mathcal{N}}+g_i+\sum_s\widetilde{r}(s)}{2\sum_s\widetilde{w}(s)}\widetilde{w}(T)-\frac{1}{2}\widetilde{r}(T),
          \end{aligned}
     \end{equation*}
     where the third equality uses \eqref{nuiT}.
     So the best response strategy is as \eqref{expli}.
\end{proof}

For any matrix  $\mathbf{A}\in\mathbb{C}^{n\times n}$, let $\|\mathbf{A}\|_1:=\max_j\sum_{i=1}^n |\mathbf{A}_{ij}|$ denote the  maximum absolute column sum norm.


\begin{lemma}\label{Spectral}
     For any $\mathbf{A}\in\mathbb{C}^{n\times n}$ and any constant $\varepsilon>0$, we can find an invertible matrix $\mathbf{S}_{\mathbf{A},\varepsilon}\in\mathbb{C}^{n\times n}$ such that $$\|\mathbf{S}_{\mathbf{A},\varepsilon}^{-1}\mathbf{A}\mathbf{S}_{\mathbf{A},\varepsilon}\|_1\leq\rho(\mathbf{A})+\varepsilon,$$ where $\rho(\mathbf{A})$ is the spectral radius of $\mathbf{A}$.
\end{lemma}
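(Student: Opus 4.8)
\emph{Proof proposal for Lemma \ref{Spectral}.} The plan is to reduce $\mathbf{A}$ to upper triangular form by a unitary similarity (Schur decomposition) and then apply one further diagonal similarity that shrinks the strictly upper triangular part enough that every absolute column sum drops below $\rho(\mathbf{A})+\varepsilon$; the resulting conjugated matrix gives the norm that is ultimately needed to turn the spectral bound on $\bm{F}$ into a genuine contraction in Lemma \ref{conmap}. First I would write $\mathbf{A}=\mathbf{U}\mathbf{T}\mathbf{U}^{*}$ with $\mathbf{U}$ unitary and $\mathbf{T}=(T_{ij})$ upper triangular, whose diagonal entries $T_{11},\ldots,T_{nn}$ are the eigenvalues of $\mathbf{A}$, so that $|T_{ii}|\le\rho(\mathbf{A})$ for every $i$.

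Next, for a parameter $t\in(0,1]$ set $\mathbf{D}_t:=\mathrm{diag}(t,t^{2},\ldots,t^{n})$, which is invertible, and note that $(\mathbf{D}_t^{-1}\mathbf{T}\mathbf{D}_t)_{ij}=t^{j-i}T_{ij}$. Since $\mathbf{T}$ is upper triangular the nonzero entries all have $j\ge i$: the diagonal is left unchanged, while each off-diagonal entry acquires a factor $t^{j-i}$ with exponent $j-i\ge 1$. Hence the $j$-th absolute column sum of $\mathbf{D}_t^{-1}\mathbf{T}\mathbf{D}_t$ equals $|T_{jj}|+\sum_{i<j}t^{j-i}|T_{ij}|\le\rho(\mathbf{A})+t\,(n-1)\max_{k<\ell}|T_{k\ell}|$, a bound independent of $j$. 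Choosing $t$ small enough that $t\,(n-1)\max_{k<\ell}|T_{k\ell}|\le\varepsilon$ then yields $\|\mathbf{D}_t^{-1}\mathbf{T}\mathbf{D}_t\|_1\le\rho(\mathbf{A})+\varepsilon$.

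Finally I would take $\mathbf{S}_{\mathbf{A},\varepsilon}:=\mathbf{U}\mathbf{D}_t$, which is invertible and satisfies $\mathbf{S}_{\mathbf{A},\varepsilon}^{-1}\mathbf{A}\mathbf{S}_{\mathbf{A},\varepsilon}=\mathbf{D}_t^{-1}\mathbf{U}^{*}\mathbf{A}\mathbf{U}\mathbf{D}_t=\mathbf{D}_t^{-1}\mathbf{T}\mathbf{D}_t$, so the bound just obtained is exactly the assertion. The argument is standard and has no deep obstacle; the only points needing a little care are that the diagonal scaling must shrink rather than enlarge the off-diagonal block (whence $t<1$ together with the increasing exponent pattern in $\mathbf{D}_t$), and that the column-sum estimate is uniform over all $j$ before $t$ is fixed. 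The degenerate case $\rho(\mathbf{A})=0$ is already covered, since then $\mathbf{T}$ is strictly upper triangular and the same estimate forces every column sum below $\varepsilon$; one could equally run the computation with the Jordan form of $\mathbf{A}$ in place of its Schur form.
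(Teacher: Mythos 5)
Your proof is correct, and it reaches the lemma by a genuinely different decomposition than the paper. The paper conjugates $\mathbf{A}$ to its Jordan canonical form $\mathbf{J}=\mathbf{\Lambda}+\mathbf{\tilde{I}}$ and then applies the diagonal scaling $\mathbf{H}=\mathrm{diag}(1,\varepsilon,\ldots,\varepsilon^{n-1})$; since the nilpotent part $\mathbf{\tilde{I}}$ has entries only $0$ or $1$, the conjugated matrix is $\mathbf{\Lambda}+\varepsilon\mathbf{\tilde{I}}$ and the column-sum bound $\rho(\mathbf{A})+\varepsilon$ is immediate, with no tuning of the scaling parameter. You instead use the Schur decomposition $\mathbf{A}=\mathbf{U}\mathbf{T}\mathbf{U}^{*}$ together with $\mathbf{D}_t=\mathrm{diag}(t,t^{2},\ldots,t^{n})$, which forces the extra (but harmless) step of choosing $t$ small against $(n-1)\max_{k<\ell}|T_{k\ell}|$; your uniform-in-$j$ column estimate, the identity $\mathbf{S}^{-1}\mathbf{A}\mathbf{S}=\mathbf{D}_t^{-1}\mathbf{T}\mathbf{D}_t$ with $\mathbf{S}=\mathbf{U}\mathbf{D}_t$, and the degenerate case where the off-diagonal part vanishes are all handled correctly. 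What your route buys is that the Schur form exists by an elementary unitary-triangularization argument and is numerically robust, so you avoid invoking the Jordan form, which is the heavier structural tool; what the paper's route buys is that the $0/1$ superdiagonal lets the scaling parameter be $\varepsilon$ itself, making the final estimate one line. Either version establishes the same standard fact and suffices for the contraction argument in Lemma \ref{conmap}.
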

\begin{proof}
     For any matrix $\mathbf{A}$, there exists an invertible matrix $\mathbf{P}$ such that $\mathbf{P}^{-1}\mathbf{A}\mathbf{P}=\mathbf{J}$, and $\mathbf{J}$ is the Jordan canonical form of $\mathbf{A}$. Let $\lambda_1,\lambda_2,\ldots,\lambda_n$ be the $n$ eigenvalues of $\mathbf{A}$, so $\mathbf{J}$ can be written as $$\mathbf{J}=\mathbf{\Lambda}+\mathbf{\tilde{I}},$$ where $\mathbf{\Lambda}=\mbox{diag}(\lambda_1,\lambda_2,\ldots,\lambda_n)$ and $$\mathbf{\tilde{I}}=
     \begin{pmatrix}
          0 & \delta_1 &  &  & \\
            & 0 & \delta_2 &  & \\
           &  & \ddots & \ddots & \\
           &  &  & 0 & \delta_{n-1} \\
           &  &  &  & 0\\
     \end{pmatrix}
     (\delta_i=0~\mbox{or}~1).$$
     Let $\mathbf{H}:=\mbox{diag}(1,\varepsilon,\ldots,\varepsilon^{n-1})$, and we have $$(\mathbf{PH})^{-1}\mathbf{A}(\mathbf{PH})=\mathbf{H}^{-1}\mathbf{JH}=\mathbf{H}^{-1}\mathbf{\Lambda H}+\mathbf{H}^{-1}\mathbf{\tilde{I}H}=\mathbf{\Lambda}+\varepsilon\mathbf{\tilde{I}}.$$
     Finally, we make $\mathbf{S}_{\mathbf{A},\varepsilon}=\mathbf{PH}$, then $\mathbf{S}_{\mathbf{A},\varepsilon}$ is invertible, and $$\|\mathbf{S}_{\mathbf{A},\varepsilon}^{-1}\mathbf{A}\mathbf{S}_{\mathbf{A},\varepsilon}\|_1=\|\mathbf{\Lambda}+\varepsilon\mathbf{\tilde{I}}\|_1\leq \max_i |\lambda_i|+\varepsilon=\rho(\mathbf{A})+\varepsilon.$$
\end{proof}

\emph{Proof of Lemma \ref{conmap}:} First, we provide an explicit expression for the mapping $\bm{F}$.
     For any $i\in\mathcal{N}$, denote $$\alpha_i:=\frac{g_{\mathcal{N}}+g_i+\sum_s\widetilde{r}(s)}{2\sum_s\widetilde{w}(s)}$$ and $$\bm{e}_i:=(\underbrace{0,\ldots,0}_i,1,\ldots,1)^{\top}.$$ According to \eqref{expli} and \eqref{seqbs}, we have
     \begin{equation}\label{expli2}
          \begin{aligned}
               \bm{f}_1(\bm{\nu})&=-\frac{1}{2}\bm{\nu}^{\top}\bm{e}_1+\alpha_1(\widetilde{w}(1),\ldots,\widetilde{w}(T))^{\top}\\
                                 &\qquad\qquad\qquad\qquad\qquad\qquad-\frac{1}{2}(\widetilde{r}(1),\ldots,\widetilde{r}(T))^{\top}\\
               \bm{f}_2(\bm{\nu})&=-\frac{1}{2}\Big(\bm{f}_1(\bm{\nu})+\bm{\nu}^{\top}\bm{e}_2\Big)+\alpha_2(\widetilde{w}(1),\ldots,\widetilde{w}(T))^{\top}\\
                                 &\qquad\qquad\qquad\qquad\qquad\qquad-\frac{1}{2}(\widetilde{r}(1),\ldots,\widetilde{r}(T))^{\top}\\
                                 &=-\frac{1}{2}\bm{\nu}^{\top}\Big(\bm{e}_2-\frac{1}{2}\bm{e}_1\Big)+\Big(\alpha_2-\frac{\alpha_1}{2}\Big)\\
                                 &\qquad~~\times(\widetilde{w}(1),\ldots,\widetilde{w}(T))^{\top}-\frac{1}{2^2}(\widetilde{r}(1),\ldots,\widetilde{r}(T))^{\top},\\
                                 &~~\vdots\\
               \bm{f}_n(\bm{\nu})&=-\frac{1}{2}\bm{\nu}^{\top}\Big(\bm{e}_n-\sum_{j=1}^{n-1}\frac{1}{2^{n-j}}\bm{e}_j\Big)+\Big(\alpha_n-\sum_{j=1}^{n-1}\frac{\alpha_j}{2^{n-j}}\Big)\\
               &\qquad~~\times(\widetilde{w}(1),\ldots,\widetilde{w}(T))^{\top}-\frac{1}{2^n}(\widetilde{r}(1),\ldots,\widetilde{r}(T))^{\top}.
          \end{aligned}
     \end{equation}
     Finally, by \eqref{expli2} we can obtain
     \begin{equation}\label{explimap}
          \bm{F}(\bm{\nu})=(\bm{f}_1(\bm{\nu}),\bm{f}_2(\bm{\nu}),\ldots,\bm{f}_n(\bm{\nu}))^{\top}=\mathbf{L}_n\bm{\nu}+\mathbf{M}_n,
     \end{equation}
     where
     \begin{equation*}
          \begin{aligned}
               \mathbf{L}_n:&=-\frac{1}{2}
               \begin{pmatrix}
                    \bm{e}_1^{\top}\\
                    \bm{e}_2^{\top}-\frac{1}{2}\bm{e}_1^{\top}\\
                    \vdots\\
                    \bm{e}_n^{\top}-\sum_{j=1}^{n-1}\frac{1}{2^{n-j}}\bm{e}_j^{\top}\\
               \end{pmatrix}\\
               &=
               \begin{pmatrix}
                    0 & -\frac{1}{2} & -\frac{1}{2} & \cdots & -\frac{1}{2} & -\frac{1}{2}\\
                    0 & \frac{1}{2^2} & -\frac{1}{2^2} & \cdots & -\frac{1}{2^2} & -\frac{1}{2^2}\\
                    \vdots & \vdots & \vdots & \vdots & \vdots & \vdots\\
                    0 & \frac{1}{2^{n-1}} & \frac{2^2-1}{2^{n-1}} & \cdots & \frac{2^{n-2}-1}{2^{n-1}} & -\frac{1}{2^{n-1}}\\
                    0 & \frac{1}{2^n} & \frac{2^2-1}{2^n} & \cdots & \frac{2^{n-2}-1}{2^n} & \frac{2^{n-1}-1}{2^n}\\
               \end{pmatrix}
          \end{aligned}
     \end{equation*}
     and
     \begin{multline*}
          \mathbf{M}_n:=\bigg(\alpha_1,\ldots,\alpha_n-\sum_{j=1}^{n-1}\frac{\alpha_j}{2^{n-j}}\bigg)^{\top}
          (\widetilde{w}(1),\ldots,\widetilde{w}(T))\\
          -\bigg(\frac{1}{2},\ldots,\frac{1}{2^n}\bigg)^{\top}(\widetilde{r}(1),\ldots,\widetilde{r}(T)).
     \end{multline*}

     For above matrix $\mathbf{L}_n\in\mathbb{R}^{n\times n}$, it can be calculated that $\rho(\mathbf{L}_n)<1$, so we can find a constant $\varepsilon_0>0$, such that $\rho(\mathbf{L}_n)+\varepsilon_0<1$.
     According to Lemma \ref{Spectral}, we can find the invertible matrix $\mathbf{S}:=\mathbf{S}_{\mathbf{L}_n,\varepsilon_0}$ such that
     \begin{equation}\label{rho1}
          \|\mathbf{S}^{-1}\mathbf{L}_n\mathbf{S}\|_1\leq\rho(\mathbf{L}_n)+\varepsilon_0.
     \end{equation}
     For any matrix $\mathbf{B}\in\mathbb{R}^{n\times T}$, we set
     \begin{equation}\label{norm}
          \|\mathbf{B}\|_\mathbf{S}:=\|\mathbf{S}^{-1}\mathbf{B}\|_1.
     \end{equation}
     We can verify that i) $\|\mathbf{B}_1-\mathbf{B}_2\|_\mathbf{S}=0$ if and only if $\mathbf{B}_1=\mathbf{B}_2$; ii) $\|\mathbf{B}_1-\mathbf{B}_2\|_\mathbf{S}=\|\mathbf{B}_2-\mathbf{B}_1\|_\mathbf{S}$; iii) $\|\mathbf{B}_1-\mathbf{B}_3\|_\mathbf{S}\leq\|\mathbf{B}_1-\mathbf{B}_2\|_\mathbf{S}+\|\mathbf{B}_2-\mathbf{B}_3\|_\mathbf{S}$.
     Therefore, $\|\cdot\|_\mathbf{S}$ is a metric on $\mathbb{R}^{n\times T}$,
     and $(\mathbb{R}^{n\times T},\|\cdot\|_\mathbf{S})$ is a complete metric space. Further, $\mathbb{H}\subset\mathbb{R}^{n\times T}$ is a closed set, therefore $(\mathbb{H},\|\cdot\|_\mathbf{S})$ is also a complete metric space.

     For any $\bm{\nu}_1, \bm{\nu}_2\in\mathbb{H}$, based on \eqref{explimap}-\eqref{norm}, we have
     \begin{equation*}
          \begin{aligned}
               \|\bm{F}(\bm{\nu}_1)-\bm{F}(\bm{\nu}_2)\|_\mathbf{S} &=\|\mathbf{L}_n\bm{\nu}_1-\mathbf{L}_n\bm{\nu}_2\|_\mathbf{S}\\
                                                 &=\|\mathbf{S}^{-1}\mathbf{L}_n(\bm{\nu}_1-\bm{\nu}_2)\|_1\\
                                                 &=\|\mathbf{S}^{-1}\mathbf{L}_n\mathbf{S}\mathbf{S}^{-1}(\bm{\nu}_1-\bm{\nu}_2)\|_1\\
                                                 &\leq\|\mathbf{S}^{-1}\mathbf{L}_n\mathbf{S}\|_1\|\mathbf{S}^{-1}(\bm{\nu}_1-\bm{\nu}_2)\|_1\\
                                                 &\leq(\rho(\mathbf{L}_n)+\varepsilon_0)\|\bm{\nu}_1-\bm{\nu}_2\|_\mathbf{S},
          \end{aligned}
     \end{equation*}
     where the first inequality uses the norm compatibility of $\|\cdot\|_1$. Since $\rho(\mathbf{L}_n)+\varepsilon_0<1$, the mapping $\bm{F}$ is a contraction mapping on $\mathbb{H}$.
\qed

\bibliographystyle{IEEEtran}
\bibliography{Main}

\begin{thebibliography}{10}
\providecommand{\url}[1]{#1}
\csname url@samestyle\endcsname
\providecommand{\newblock}{\relax}
\providecommand{\bibinfo}[2]{#2}
\providecommand{\BIBentrySTDinterwordspacing}{\spaceskip=0pt\relax}
\providecommand{\BIBentryALTinterwordstretchfactor}{4}
\providecommand{\BIBentryALTinterwordspacing}{\spaceskip=\fontdimen2\font plus
\BIBentryALTinterwordstretchfactor\fontdimen3\font minus
  \fontdimen4\font\relax}
\providecommand{\BIBforeignlanguage}[2]{{%
\expandafter\ifx\csname l@#1\endcsname\relax
\typeout{** WARNING: IEEEtran.bst: No hyphenation pattern has been}%
\typeout{** loaded for the language `#1'. Using the pattern for}%
\typeout{** the default language instead.}%
\else
\language=\csname l@#1\endcsname
\fi
#2}}
\providecommand{\BIBdecl}{\relax}
\BIBdecl

\bibitem{gupta2015gadgets}
S.~Gupta, ``From gadgets to the smart grid,'' \emph{Nature}, vol. 526, no.
  7575, pp. S90--S91, 2015.

\bibitem{beyea2010smart}
J.~Beyea, ``The smart electricity grid and scientific research,''
  \emph{Science}, vol. 328, no. 5981, pp. 979--980, 2010.

\bibitem{smith2022effect}
O.~Smith, O.~Cattell, E.~Farcot, R.~D. O’Dea, and K.~I. Hopcraft, ``The
  effect of renewable energy incorporation on power grid stability and
  resilience,'' \emph{Science Advances}, vol.~8, no.~9, p. eabj6734, 2022.

\bibitem{moslehi2010reliability}
K.~Moslehi and R.~Kumar, ``A reliability perspective of the smart grid,''
  \emph{IEEE Transactions on Smart Grid}, vol.~1, no.~1, pp. 57--64, 2010.

\bibitem{siano2014demand}
P.~Siano, ``Demand response and smart grids—a survey,'' \emph{Renewable and
  Sustainable Energy Reviews}, vol.~30, pp. 461--478, 2014.

\bibitem{liu2023climate}
L.~Liu, G.~He, M.~Wu, G.~Liu, H.~Zhang, Y.~Chen, J.~Shen, and S.~Li, ``Climate
  change impacts on planned supply--demand match in global wind and solar
  energy systems,'' \emph{Nature Energy}, vol.~8, no.~8, pp. 870--880, 2023.

\bibitem{li2021coordinating}
Y.~Li, M.~Han, Z.~Yang, and G.~Li, ``Coordinating flexible demand response and
  renewable uncertainties for scheduling of community integrated energy systems
  with an electric vehicle charging station: A bi-level approach,'' \emph{IEEE
  Transactions on Sustainable Energy}, vol.~12, no.~4, pp. 2321--2331, 2021.

\bibitem{wang2023incentive}
Z.~Wang, B.~Lu, B.~Wang, Y.~Qiu, H.~Shi, B.~Zhang, J.~Li, H.~Li, and W.~Zhao,
  ``Incentive based emergency demand response effectively reduces peak load
  during heatwave without harm to vulnerable groups,'' \emph{Nature
  Communications}, vol.~14, no.~1, p. 6202, 2023.

\bibitem{xu2021hybrid}
B.~Xu, J.~Wang, M.~Guo, J.~Lu, G.~Li, and L.~Han, ``A hybrid demand response
  mechanism based on real-time incentive and real-time pricing,''
  \emph{Energy}, vol. 231, p. 120940, 2021.

\bibitem{jiang2022multi}
T.~Jiang, C.~Chung, P.~Ju, and Y.~Gong, ``A multi-timescale allocation
  algorithm of energy and power for demand response in smart grids: A
  stackelberg game approach,'' \emph{IEEE Transactions on Sustainable Energy},
  vol.~13, no.~3, pp. 1580--1593, 2022.

\bibitem{parizy2018low}
E.~S. Parizy, H.~R. Bahrami, and S.~Choi, ``A low complexity and secure demand
  response technique for peak load reduction,'' \emph{IEEE Transactions on
  Smart Grid}, vol.~10, no.~3, pp. 3259--3268, 2018.

\bibitem{zhong2016stability}
W.~Zhong, R.~Yu, S.~Xie, Y.~Zhang, and D.~K. Yau, ``On stability and robustness
  of demand response in v2g mobile energy networks,'' \emph{IEEE Transactions
  on Smart Grid}, vol.~9, no.~4, pp. 3203--3212, 2016.

\bibitem{sterl2020smart}
S.~Sterl, I.~Vanderkelen, C.~J. Chawanda, D.~Russo, R.~J. Brecha,
  A.~Van~Griensven, N.~P. van Lipzig, and W.~Thiery, ``Smart renewable
  electricity portfolios in west africa,'' \emph{Nature Sustainability},
  vol.~3, no.~9, pp. 710--719, 2020.

\bibitem{cecati2011combined}
C.~Cecati, C.~Citro, and P.~Siano, ``Combined operations of renewable energy
  systems and responsive demand in a smart grid,'' \emph{IEEE Transactions on
  Sustainable Energy}, vol.~2, no.~4, pp. 468--476, 2011.

\bibitem{dai2021real}
Y.~Dai, X.~Sun, Y.~Qi, and M.~Leng, ``A real-time, personalized
  consumption-based pricing scheme for the consumptions of traditional and
  renewable energies,'' \emph{Renewable Energy}, vol. 180, pp. 452--466, 2021.

\bibitem{qian2013demand}
L.~P. Qian, Y.~J.~A. Zhang, J.~Huang, and Y.~Wu, ``Demand response management
  via real-time electricity price control in smart grids,'' \emph{IEEE Journal
  on Selected Areas in Communications}, vol.~31, no.~7, pp. 1268--1280, 2013.

\bibitem{li2020real}
H.~Li, Z.~Wan, and H.~He, ``Real-time residential demand response,'' \emph{IEEE
  Transactions on Smart Grid}, vol.~11, no.~5, pp. 4144--4154, 2020.

\bibitem{mahmoudi2014wind}
N.~Mahmoudi, T.~K. Saha, and M.~Eghbal, ``Wind power offering strategy in
  day-ahead markets: Employing demand response in a two-stage plan,''
  \emph{IEEE Transactions on Power Systems}, vol.~30, no.~4, pp. 1888--1896,
  2014.

\bibitem{ding2020tracking}
T.~Ding, M.~Qu, N.~Amjady, F.~Wang, R.~Bo, and M.~Shahidehpour, ``Tracking
  equilibrium point under real-time price-based residential demand response,''
  \emph{IEEE Transactions on Smart Grid}, vol.~12, no.~3, pp. 2736--2740, 2020.

\bibitem{samadi2014real}
P.~Samadi, H.~Mohsenian-Rad, V.~W. Wong, and R.~Schober, ``Real-time pricing
  for demand response based on stochastic approximation,'' \emph{IEEE
  Transactions on Smart Grid}, vol.~5, no.~2, pp. 789--798, 2014.

\bibitem{namerikawa2015real}
T.~Namerikawa, N.~Okubo, R.~Sato, Y.~Okawa, and M.~Ono, ``Real-time pricing
  mechanism for electricity market with built-in incentive for participation,''
  \emph{IEEE Transactions on Smart Grid}, vol.~6, no.~6, pp. 2714--2724, 2015.

\bibitem{atzeni2014noncooperative}
I.~Atzeni, L.~G. Ord{\'o}{\~n}ez, G.~Scutari, D.~P. Palomar, and J.~R.
  Fonollosa, ``Noncooperative day-ahead bidding strategies for demand-side
  expected cost minimization with real-time adjustments: A gnep approach,''
  \emph{IEEE Transactions on Signal Processing}, vol.~62, no.~9, pp.
  2397--2412, 2014.

\bibitem{baniasadi2018optimal}
A.~Baniasadi, D.~Habibi, O.~Bass, and M.~A. Masoum, ``Optimal real-time
  residential thermal energy management for peak-load shifting with
  experimental verification,'' \emph{IEEE Transactions on Smart Grid}, vol.~10,
  no.~5, pp. 5587--5599, 2018.

\bibitem{choi2017practical}
S.~Choi, ``Practical coordination between day-ahead and real-time optimization
  for economic and stable operation of distribution systems,'' \emph{IEEE
  Transactions on Power Systems}, vol.~33, no.~4, pp. 4475--4487, 2017.

\bibitem{galeotti2010network}
A.~Galeotti, S.~Goyal, M.~O. Jackson, F.~Vega-Redondo, and L.~Yariv, ``Network
  games,'' \emph{The Review of Economic Studies}, vol.~77, no.~1, pp. 218--244,
  2010.

\bibitem{chen2023convergence}
G.~Chen and Y.~Yu, ``Convergence analysis and strategy control of evolutionary
  games with imitation rule on toroidal grid,'' \emph{IEEE Transactions on
  Automatic Control}, vol.~68, no.~12, pp. 8185--8192, 2023.

\bibitem{cheng2023evolutionary}
J.~Cheng, W.~Mei, W.~Su, and G.~Chen, ``Evolutionary games on networks: Phase
  transition, quasi-equilibrium, and mathematical principles,'' \emph{Physica
  A: Statistical Mechanics and its Applications}, vol. 611, p. 128447, 2023.

\bibitem{riehl2016towards}
J.~R. Riehl and M.~Cao, ``Towards optimal control of evolutionary games on
  networks,'' \emph{IEEE Transactions on Automatic Control}, vol.~62, no.~1,
  pp. 458--462, 2016.

\bibitem{madeo2014game}
D.~Madeo and C.~Mocenni, ``Game interactions and dynamics on networked
  populations,'' \emph{IEEE Transactions on Automatic Control}, vol.~60, no.~7,
  pp. 1801--1810, 2014.

\bibitem{tan2016analysis}
S.~Tan, Y.~Wang, and J.~L{\"u}, ``Analysis and control of networked game
  dynamics via a microscopic deterministic approach,'' \emph{IEEE Transactions
  on Automatic Control}, vol.~61, no.~12, pp. 4118--4124, 2016.

\bibitem{moon2016linear}
J.~Moon and T.~Ba{\c{s}}ar, ``Linear quadratic risk-sensitive and robust mean
  field games,'' \emph{IEEE Transactions on Automatic Control}, vol.~62, no.~3,
  pp. 1062--1077, 2016.

\bibitem{morimoto2015subsidy}
T.~Morimoto, T.~Kanazawa, and T.~Ushio, ``Subsidy-based control of
  heterogeneous multiagent systems modeled by replicator dynamics,'' \emph{IEEE
  Transactions on Automatic Control}, vol.~61, no.~10, pp. 3158--3163, 2015.

\bibitem{guo2013algebraic}
P.~Guo, Y.~Wang, and H.~Li, ``Algebraic formulation and strategy optimization
  for a class of evolutionary networked games via semi-tensor product method,''
  \emph{Automatica}, vol.~49, no.~11, pp. 3384--3389, 2013.

\bibitem{cheng2016decomposed}
D.~Cheng, T.~Liu, K.~Zhang, and H.~Qi, ``On decomposed subspaces of finite
  games,'' \emph{IEEE Transactions on Automatic Control}, vol.~61, no.~11, pp.
  3651--3656, 2016.

\bibitem{zhu2016evolutionary}
B.~Zhu, X.~Xia, and Z.~Wu, ``Evolutionary game theoretic demand-side management
  and control for a class of networked smart grid,'' \emph{Automatica},
  vol.~70, pp. 94--100, 2016.

\bibitem{etesami2018stochastic}
S.~R. Etesami, W.~Saad, N.~B. Mandayam, and H.~V. Poor, ``Stochastic games for
  the smart grid energy management with prospect prosumers,'' \emph{IEEE
  Transactions on Automatic Control}, vol.~63, no.~8, pp. 2327--2342, 2018.

\bibitem{mohsenian2010autonomous}
A.-H. Mohsenian-Rad, V.~W. Wong, J.~Jatskevich, R.~Schober, and A.~Leon-Garcia,
  ``Autonomous demand-side management based on game-theoretic energy
  consumption scheduling for the future smart grid,'' \emph{IEEE Transactions
  on Smart Grid}, vol.~1, no.~3, pp. 320--331, 2010.

\bibitem{bayram2014unsplittable}
I.~S. Bayram, G.~Michailidis, and M.~Devetsikiotis, ``Unsplittable load
  balancing in a network of charging stations under qos guarantees,''
  \emph{IEEE Transactions on Smart Grid}, vol.~6, no.~3, pp. 1292--1302, 2014.

\bibitem{li2022noncooperative}
H.~Li, Z.~Ren, A.~Trivedi, P.~P. Verma, D.~Srinivasan, and W.~Li, ``A
  noncooperative game-based approach for microgrid planning considering
  existing interconnected and clustered microgrids on an island,'' \emph{IEEE
  Transactions on Sustainable Energy}, vol.~13, no.~4, pp. 2064--2078, 2022.

\bibitem{maharjan2013dependable}
S.~Maharjan, Q.~Zhu, Y.~Zhang, S.~Gjessing, and T.~Basar, ``Dependable demand
  response management in the smart grid: A stackelberg game approach,''
  \emph{IEEE Transactions on Smart Grid}, vol.~4, no.~1, pp. 120--132, 2013.

\bibitem{yu2015real}
M.~Yu and S.~H. Hong, ``A real-time demand-response algorithm for smart grids:
  A stackelberg game approach,'' \emph{IEEE Transactions on Smart Grid},
  vol.~7, no.~2, pp. 879--888, 2015.

\bibitem{yan2020distribution}
M.~Yan, M.~Shahidehpour, A.~Paaso, L.~Zhang, A.~Alabdulwahab, and A.~Abusorrah,
  ``Distribution network-constrained optimization of peer-to-peer transactive
  energy trading among multi-microgrids,'' \emph{IEEE Transactions on Smart
  Grid}, vol.~12, no.~2, pp. 1033--1047, 2020.

\bibitem{li2019stackelberg}
J.~Li, G.~Ma, T.~Li, W.~Chen, and Y.~Gu, ``A stackelberg game approach for
  demand response management of multi-microgrids with overlapping sales
  areas,'' \emph{Science China Information Sciences}, vol.~62, pp. 1--13, 2019.

\bibitem{bacsar1998dynamic}
T.~Ba{\c{s}}ar and G.~J. Olsder, \emph{Dynamic noncooperative game
  theory}.\hskip 1em plus 0.5em minus 0.4em\relax SIAM, 1998.

\bibitem{borwein2006convex}
J.~Borwein and A.~Lewis, \emph{Convex Analysis}.\hskip 1em plus 0.5em minus
  0.4em\relax Springer, 2006.

\bibitem{kantorovich2016functional}
L.~V. Kantorovich and G.~P. Akilov, \emph{Functional analysis}.\hskip 1em plus
  0.5em minus 0.4em\relax Elsevier, 2016.

\bibitem{rosen1965existence}
J.~B. Rosen, ``Existence and uniqueness of equilibrium points for concave
  n-person games,'' \emph{Econometrica: Journal of the Econometric Society},
  pp. 520--534, 1965.

\bibitem{laub2004matrix}
A.~J. Laub, \emph{Matrix analysis for scientists and engineers}.\hskip 1em plus
  0.5em minus 0.4em\relax SIAM, 2004.

\bibitem{gould2005numerical}
N.~Gould, D.~Orban, and P.~Toint, ``Numerical methods for large-scale nonlinear
  optimization,'' \emph{Acta Numerica}, vol.~14, pp. 299--361, 2005.

\bibitem{hager1989updating}
W.~W. Hager, ``Updating the inverse of a matrix,'' \emph{SIAM Review}, vol.~31,
  no.~2, pp. 221--239, 1989.

\end{thebibliography}

\end{document}